\newcommand{\ignore}[1]{}
\newtheorem{theorem}{Theorem}[section]
\newtheorem{corollary}[theorem]{Corollary}
\newtheorem{proposition}[theorem]{Proposition}
\theoremstyle{definition}
\newtheorem{example}[theorem]{Example}
\theoremstyle{remark}
\newtheorem{remark}[theorem]{Remark}
\numberwithin{equation}{section}
\newcommand{\bC}{\mathbb{C}}
\newcommand{\bN}{{\mathbb{N}}}
\newcommand{\bZ}{{\mathbb{Z}}}
\newcommand{\bQ}{{\mathbb{Q}}}
\newcommand{\bR}{\mathbb{R}}
\newcommand{\Ass}{\mbox{\rm{Ass}} }
\newcommand{\Min}{\mbox{\rm{Min}} }
\newcommand{\hlt}{\mathrm {ht\, }}
\newcommand{\dm}{\mathrm {dim }}
\newcommand{\id}{\mathrm {id }}
\newcommand{\Supp}{\mbox{\rm{Supp}} }
\newcommand{\supp}{\mbox{\rm{supp}} }
\newcommand{\spec}{\mathrm {Spec\, }}
\newcommand{\kr}{\mathrm {Ker\, }}
\newcommand{\Hom}{\mbox{\rm{Hom}} }
\newcommand{\Ext}{\mbox{\rm{Ext}} }
\newcommand{\fM}{{\mathfrak m}}
\newcommand{\fp}{\mathfrak{p}}
\newcommand{\fQ}{\mathfrak{q}}
\newcommand{\fa}{\mathfrak{a}}
\newcommand{\cD}{{\mathcal D}}
\newcommand{\cM}{{\mathcal M}}
\newcommand{\la}{{\lambda}}
\newcommand{\lra}{{\longrightarrow}}
\newcommand{\bop}{{\bigoplus}}
\newcommand{\pa}{{\partial}}
\newcommand{\mc}{\mathcal}
\newcommand{\E}[1]{\mbox{E}_{R}(R/#1)}
\begin{document}

\title[Lyubeznik numbers of monomial ideals]{Lyubeznik numbers of monomial ideals}

\author[J. \`Alvarez Montaner]{Josep \`Alvarez Montaner$^\dag$}
\thanks{$^\dag$Partially supported by MTM2010-20279-C02-01 and SGR2009-1284}
\address{Dept. Matem\`atica Aplicada I\\
Universitat Polit\`ecnica de Catalunya\\ Av. Diagonal 647, Barcelona
08028, SPAIN} \email{Josep.Alvarez@upc.es}

\author[A. Vahidi]{Alireza Vahidi}
%\thanks{Partially supported by MTM2004-01850}
\address{ %Dept. \\
Dept. Mathematics \\Payame Noor University\\ 19395-4697 Tehran, I.R. of IRAN} \email{vahidi.ar@gmail.com}

%\keywords {Betti numbers, Monomial ideals}

%\subjclass[2000]{Primary 13D45, 13N10}

%\begin{keyword}
%Local cohomology modules,  \sep Monomial Ideals, \sep Bass numbers
%\MSC[2010]Primary 13A35; Secondary 13F55, 13N10, 14G17
%\end{keyword}

\begin{abstract}
Let $R=k[x_1,...,x_n]$ be the polynomial ring in $n$ independent
variables, where $k$ is a field. In this work we will study Bass
numbers of local cohomology modules $H^r_I(R)$ supported on a
squarefree monomial ideal $I\subseteq R$. Among them we are mainly
interested in Lyubeznik numbers. We build a dictionary between the
modules $H^r_I(R)$ and the minimal free resolution of the Alexander
dual ideal $I^{\vee}$ that allow us to interpret Lyubeznik numbers
as the obstruction to the acyclicity of the linear strands of $I^{\vee}$.
The methods we develop also help us to give a bound for the injective dimension
of the local cohomology modules in terms of the dimension of the small support.
%Using graded Matlis duality we can also study dual Bass numbers.

\end{abstract}

\maketitle

\section{Introduction}

Some finiteness properties of local cohomology modules $H^r_I(R)$
were established by C.~Huneke and R.~Y.~Sharp \cite{HS93} and
G.~Lyubeznik \cite{Ly93,Ly97} for the case of regular local rings
$(R,\fM, k)$ containing a field. Among these properties they proved
a bound for the injective dimension $${\rm id}_R(H_I^r(R)) \leq
{\dm}_R H_I^r(R)$$ and the finiteness of  all the  Bass numbers
$\mu_p(\fp,H_I^{r}(R)):= {\dm}_{k(\fp)}
{\Ext}_{{R}_{\fp}}^p(k(\fp),H_{I{R}_{\fp} }^{r}({R}_{\fp}))$  with
respect to any prime ideal $\fp \subseteq R$. This last fact
prompted G.~Lyubeznik  to define a new set of numerical invariants
$\la_{p,i}(R/I):=\mu_p(\fM,H_I^{n-i}(R)),$ where $n$ is the
dimension of $R$. These invariants satisfy $\la_{d,d}\neq 0$ and
$\la_{p,i}=0$ for $i>d$, $p>i$, where $d=\dm R/I$. Therefore we can
collect them in the following table:
$$\Lambda(R/I)  = \left(
                    \begin{array}{ccc}
                      \la_{0,0} & \cdots & \la_{0,d}  \\
                       & \ddots & \vdots \\
                       &  & \la_{d,d} \\
                    \end{array}
                  \right)
$$

Lyubeznik numbers carry some interesting topological information
(see \cite{Ly93}, \cite{GS98}, \cite{BlBo}, \cite{Bl3}) but not too
many examples can be found in the literature. We point out that a
general algorithm to compute these invariants in characteristic zero
has been given by U.~Walther \cite{Wa99} using the theory of
$D$-modules, i.e. the theory of modules over the ring of $k$-linear
differential operators $D_{R|k}$.

\vskip 2mm

The $D$-module approach was also used by the first author in
\cite{Al00,Al05} to study local cohomology modules supported on
monomial ideals over the polynomial ring $R=k[x_1,...,x_n]$ and
compute Lyubeznik numbers using the so-called {\it characteristic
cycle}. Local cohomology modules supported on monomial ideals
$H_I^r(R)$ have also been extensively studied using their natural
structure as $\bZ^n$-graded modules. For example, N.~Terai \cite{Te}
gives a formula for its graded pieces equivalent, using local
duality with monomial support (see \cite[\S $6.2$]{Mi00}), to the
famous Hochster formula for the $\bZ^n$-graded Hilbert function of
$H_{\fM}^r(R/I)$ \cite{Sta96}. Simultaneously, M.~Musta\c{t}\u{a}
\cite{Mu00} gives a complete description of the $\bZ^n$-graded
structure, i.e.  a formula for the graded pieces of $H_I^r(R)$ and a
description of the linear maps among them. This description is
equivalent to the one given by H.~G.~Gr\"abe \cite{Gra84} to
describe the module structure of $H_{\fM}^r(R/I)$. In the same
spirit, a formula for the graded pieces of $H_J^r(R/I)$, where
$J\supseteq I$ is another squarefree monomial ideal was given by
V.~Reiner, V.~Welker and K.~Yanagawa in \cite{RWY}.

\vskip 2mm

Building on previous work on {\it squarefree} modules \cite{Ya00},
K.~Yanagawa develops in \cite{Ya01} the theory of {\it straight}
modules to study local cohomology modules $H_I^r(R)$ and their Bass
numbers. Simultaneously, E.~Miller \cite{Mi00} also generalized
squarefree modules by introducing the categories of {\bf
a}-positively determined (resp. {\bf a}-determined)
modules\footnote{Squarefree (resp. straight) modules correspond to
{\bf 1}-positively determined (resp. {\bf 1}-determined) modules.}.
When dealing with Bass numbers, K.~Yanagawa gave the following formula
for Lyubeznik numbers:
$$\la_{p,i}(R/I)=\dm_k[{\Ext}_R^{n-p}({\Ext}_R^{n-i}(R/I,R),R)]_{\bf 0}$$
here $[\cdot ]_{\bf 0}$ denotes the degree $0$ component of a $\bZ^n$-graded module.

\vskip 2mm

The approach we take in this work to study  Lyubeznik numbers uses
the fact that they can be realized as the dimension of the degree ${\bf  1}$ part of
the local cohomology modules $H_{{\fM}}^p(H_I^r(R))$. In Section $3$
we compute these graded pieces and, in general, the graded pieces of
$H_{{\fp}}^p(H_I^r(R))$, where $\fp$ is any homogeneous prime ideal.
More precisely, the piece $[H_{{\fM}}^p(H_I^r(R))]_{\bf 1}$ is
nothing but the $p-th$ homology group of a complex of $k$-vector
spaces we construct using the whole structure of $H_I^r(R)$, i.e.
the graded pieces and the linear maps among them.

\vskip 2mm

In Section $4$ we build a dictionary between local cohomology
modules and free resolutions of monomial ideals that gives us a very
simple interpretation of Lyubeznik numbers. It turns out that the
complex we use to compute the degree ${\bf 1}$ part of
$H_{{\fM}}^p(H_I^r(R))$ is the dual, as $k$-vector spaces, of the complex given
by the scalar entries in the monomial matrices (in the sense of \cite{Mi00,MS05}) of the $r$-linear strand
of the Alexander dual ideal $I^{\vee}$. Thus, Lyubeznik numbers can
be thought as a measure of the acyclicity of these linear strands.

\vskip 2mm

Using the techniques we developed previously we are able to study
some properties of Bass numbers of local cohomology modules in
Section $5$. Recall that, given a finitely generated module $M$, one
has  $ {\rm id}_R \hskip 1mm M \geq {\dim}_R \hskip 1mm \Supp_R M$.
This bound is a consequence of the following well-known property:
Let $\fp \subseteq \fQ \in {\rm Spec} R$ such that $\hlt
(\fQ/\fp)=s$. Then $${\mu}_i(\fp, M)\neq 0 \Longrightarrow
\mu_{i+s}(\fQ, M)\neq 0.$$ For the case of local cohomology modules
this property is no longer true but we can control the behavior of
Bass numbers depending on the structure of $H_I^r(R)$. This control
leads to  a sharper bound for the injective dimension of local
cohomology modules supported on monomial ideals in terms of the
dimension of the small support of these modules $$ {\rm id}_R
H_I^r(R) \leq {\dim}_R \supp_R H_I^r(R).$$ We recall that the small
support was introduced by H.~B.~Foxby \cite{Fox} and consists on the
prime ideals having a Bass number different from zero. For finitely
generated modules the small support coincide with the support but
this is no longer true for non-finitely generated modules.

\vskip 2mm

In Section $6$ we use a shifted version of graded Matlis duality to
study dual Bass numbers. We obtain analogous results to those
obtained for Bass numbers that allow us to study projective
resolutions of local cohomology modules.

\vskip 2mm

\noindent {\it Acknowledgement:} We would like to thank
O.~Fern\'andez-Ramos for implementing our methods using the package
{\tt Macaulay 2} \cite{GS}. We also thank E.~Miller and K.~Yanagawa
for several useful remarks and clarifications and J.~Herzog for
pointing us out to sequentially Cohen-Macaulay ideals as those
ideals having trivial Lyubeznik numbers.

\section{Local cohomology modules supported on monomial ideals}

Let $R=k[x_1,...,x_n]$ be the polynomial ring in $n$ independent
variables, where $k$ is a field. An ideal $I\subseteq R$ is said to
be a { squarefree monomial ideal} if it may be generated by
squarefree monomials  ${\bf x^{\alpha}}:= x_1^{\alpha_1}\cdots
x_n^{\alpha_n},\hskip 2mm {\rm where}\hskip 2mm {\bf \alpha}\in
\{0,1\}^n$. Its minimal primary decomposition is given in terms
of { face ideals} ${\fp_{\alpha}}:= \langle x_i\hskip 2mm | \hskip
2mm \alpha_i \neq 0 \rangle, \hskip 2mm  {\bf \alpha}\in \{0,1\}^n.$
For simplicity we will denote the homogeneous maximal
ideal $\fM:=\fp_{{\bf 1}}=(x_1,\dots,x_n)$, where ${\bf 1}=(1,\dots,1)$. As usual,  we denote $|\alpha|= \alpha_1
+\cdots+\alpha_n$ and $\varepsilon_1,\dots, \varepsilon_n$ will be
the natural basis of $\bZ^n$.

\vskip 2mm

A lot of progress in the study of local cohomology modules
$H_I^r(R)$  supported on monomial ideals has been made based on the
fact that they have a structure as $\bZ^n$-graded modules. Another
line of research uses their structure as regular holonomic modules
over the ring of $k$-linear differential operators $D_{R|k}$, in
particular the fact that they are finitely generated. The aim of
this Section is to give a quick overview of both approaches. For the
$\bZ^n$-graded case we will highlight the main results obtained in
\cite{Mu00}, \cite{Te}, \cite{Ya01} (see also \cite{MS05}). The main
sources for the $D_{R|k}$-module case are \cite{AGZ03}, \cite{AZ}.
For unexplained terminology in the theory of $D_{R|k}$-modules one
may consult \cite{Bj79}, \cite{Co95}.

\subsection{$\bZ^n$-graded structure}
Local cohomology modules $H_I^r(R)$ supported on monomial ideals are
$\bZ^n$-graded modules satisfying some nice properties since they
fit, modulo a shifting by ${\bf 1}$, into the category of straight
(resp. {\bf 1}-determined) modules introduced by K.~Yanagawa
\cite{Ya01} (resp. E.~Miller \cite{Mi00}). In this framework, these
modules are completely described by the graded pieces
$H_I^r(R)_{-\alpha}$ for all $\alpha \in \{0,1\}^n$ and the
morphisms given by the multiplication by $x_i$:$$\cdot x_i:
H_{I}^{r}(R)_{-\alpha} \longrightarrow H_{I}^{r}(R)_{-(\alpha -
\varepsilon_i) }$$

N.~Terai \cite{Te} gave a description of these graded pieces as
follows:
$$H_I^{r}(R)_{-\alpha}\cong
\widetilde{H}_{n-r-|\alpha|-1}({\rm link}_\alpha \Delta; k),$$
\noindent where $\Delta$ is the simplicial complex on the set of
vertices $\{x_1,\dots,x_n\}$ corresponding to the squarefree
monomial ideal $I$ via the Stanley-Reisner correspondence and, given
a face $\sigma_{\alpha}:=\{x_i \hskip 2mm | \hskip 2mm
\alpha_i=1\}\in \Delta$, the link of $\sigma_{\alpha}$ in $\Delta$
is \hskip 2mm $${\rm link}_{{\alpha}}\Delta:=\{\tau \in \Delta
\hskip 2mm | \hskip 2mm \sigma_{\alpha} \cap \tau = \emptyset,
\hskip 2mm \sigma_{\alpha} \cup \tau \in \Delta \}.$$

A different approach was given independently by M.~Musta\c{t}\u{a}
\cite{Mu00} in terms of the restriction to $\sigma_{\alpha}$ that we
denote \hskip 2mm $\Delta_{{\alpha}}:=\{\tau \in \Delta \hskip 2mm |
\hskip 2mm \tau \in \sigma_{\alpha} \}.$ We have:
$$H_I^{r}(R)_{-\alpha}\cong \widetilde{H}^{r-2}(\Delta^{\vee}_{{\bf 1}-\alpha};
k), $$ where $\Delta^{\vee}_{{\bf 1}-\alpha}$ denotes the Alexander
dual of $\Delta_{{\bf 1}-\alpha}$. Both approaches are equivalent
since the equality of simplicial complexes $\Delta^{\vee}_{{\bf
1}-\alpha}=({\rm link}_{{\alpha}}\Delta)^{\vee}$ induces, by
Alexander duality, the isomorphism
$$\widetilde{H}_{n-r-|\alpha|-1}({\rm link}_{{\alpha}}\Delta; k) \cong \widetilde{H}^{r-2}(\Delta^{\vee}_{{\bf 1}-\alpha};
k).$$ Musta\c{t}\u{a} also describes the  multiplication morphism
$\cdot x_i: H_{I}^{r}(R)_{-\alpha} \longrightarrow
H_{I}^{r}(R)_{-(\alpha - \varepsilon_i) }$. It corresponds to the
morphism
$$\widetilde{H}^{r-2}(\Delta^{\vee}_{{\bf 1}-\alpha -
\varepsilon_i}; k)\longrightarrow
\widetilde{H}^{r-2}(\Delta^{\vee}_{{\bf 1}-\alpha}; k),$$ induced by
the inclusion $\Delta^{\vee}_{{\bf 1}-\alpha - \varepsilon_i}
\subseteq \Delta^{\vee}_{{\bf 1}-\alpha}$.

\subsection{$D$-module structure}
Local cohomology modules $H^r_I(R)$ supported on monomial ideals
also satisfy nice properties when viewed as $D_{R|k}$-modules since
they belong to the subcategory $D_{v=0}^T$ of regular holonomic
$D_{R|k}$-modules with support a normal crossing $T:=\{x_1\cdots
x_n=0\}$ and variation zero defined in \cite{AGZ03}. An object $M$
of this category is characterized by the existence of an increasing
filtration $\{ {F_j}\}_{0\le j\le n}$ of submodules of $M$ such that
there are isomorphisms of $D_{R|k}$-modules
$$
{F_j}/{F_{j-1}} \simeq \bop_{\stackrel{}{\scriptscriptstyle{ \mid
\alpha \mid = j}}}\,({H}_{\fp_{\alpha}}^{\mid \alpha
\mid}(R))^{m_{\alpha}},
$$ for some integers $m_{\alpha}\ge 0$,  $\alpha \in \{0,1\}^n$.
We point out that in this category we have the following objects,
$\forall \alpha \in \{0,1\}^n$:

\vskip 2mm

\begin{itemize}

 \item {\bf Simple:}
$%\hskip -1.7cm
{H}_{\fp_{\alpha}}^{\mid \alpha \mid}(R) \cong \frac{R[
\frac{1}{{\bf x}^\alpha}]}
  { \sum_{\alpha_i =1} R[ \frac{1}{{\bf
x}^{\alpha - \varepsilon_i}}]} \cong
\frac{D_{R|k}}{D_{R|k}(\{x_i\,|\,\alpha_i=1\},
\{\partial_j\,|\,\alpha_j = 0\})}.$

\vskip 2mm

\item {\bf Injective:} $%\hskip 15mm
E_{\alpha}:= {^{\ast}\E{\fp_{\alpha}}({\bf 1})} \cong \frac{R[ \frac{1}{{\bf
x}^{\bf 1}}]}{\sum_{\alpha_i =1} R[ \frac{1}{{\bf x}^{{\bf 1} -
\varepsilon_i}}]} \cong
\frac{{D_{R|k}}}{{D_{R|k}}(\{x_i\,|\,\alpha_i=1\}, \{x_j\partial_j +
1\,|\,\alpha_j = 0\})}.$

\vskip 2mm

\item {\bf Projective:} $%\hskip -4.5cm
R_{x^{\alpha}} \cong \frac{D_{R|k}}{D_{R|k}(\{x_i \pa_i +1
\,|\,\alpha_i=1\}, \{\partial_j\,|\,\alpha_j = 0\})}.$

\end{itemize}

\vskip 2mm

Following the work of A.~Galligo, M.~Granger and Ph.~Maisonobe
\cite{GGM1, GGM2} one may describe this category as a quiver
representation. More precisely, let $\mathcal{C}^n_{v=0}$ be the
category whose objects are families
$\mathcal{M}:=\{\mathcal{M}_{\alpha}\}_{\alpha\in \{ 0,1\}^n}$ of
finitely dimensional $k$-vector spaces, endowed with linear maps
$$\mathcal{M}_{\alpha} \stackrel{u_{\alpha,i}}{\longrightarrow}
\mathcal{M}_{\alpha + \varepsilon_i} ,$$ for each $\alpha\in\{
0,1\}^n$ such that $\alpha_i=0$. These maps are called canonical
maps, and they are required to satisfy $u_{\alpha,i}\circ
u_{\alpha+\varepsilon_i,j} = u_{\alpha,j}\circ
u_{\alpha+\varepsilon_j,i}$. Such an object will be called an
$n$-hypercube. A morphism between two $n$-hypercubes $\{\mathcal M
_{\alpha}\}_{\alpha}$ and $\{\mathcal N _{\alpha}\}_{\alpha}$ is a
set of linear maps $\{f_{\alpha}: \mathcal{M}_{\alpha}\to
\mathcal{N}_{\alpha}\}_{\alpha}$, commuting with the canonical maps.

\vskip 2mm

There is an equivalence of categories between $D_{v=0}^T$ and
$\mathcal{C}^n_{v=0}$ given by the contravariant exact functor that
sends an object $M$ of $D_{v=0}^T$ to the $n$-hypercube
$\mathcal{M}$ constructed as follows:

\begin{enumerate}
\item [i)]The vertices of the $n$-hypercube
are the $k$-vector spaces $\mathcal{M}_{\alpha}:=\Hom_{D_{R|k}}(M,
E_{\alpha}).$

\item [ii)]The linear maps  $u_{\alpha,i}$ are
induced by the natural epimorphisms $\pi_{\alpha,i}: E_{\alpha} \to E_{\alpha
+ \varepsilon_i}$.

\end{enumerate}

\vskip 2mm

\noindent The irreducibility of $M$ is determined by the extension
classes of the short exact sequences $$0 \lra F_0 \lra F_1 \lra F_1/F_0 \lra 0 $$ \vskip -5mm$$ \vdots $$ $$0 \lra F_{n-1} \lra F_n = M \lra F_n/F_{n-1} \lra 0 $$
\vskip 2mm \noindent associated to the filtration
$\{ {F_j}\}_{0\le j\le n}$ of submodules of $M$. It is shown in
\cite{AGZ03} and \cite{AZ} that these extension classes are uniquely
determined by the linear maps $u_{\alpha,i}$.

\vskip 2mm

It is also worth to point out that if $CC(M)=\sum m_{\alpha} \hskip
1mm T_{X_\alpha}^{\ast} \mathbb{A}_k^n$ is the {\it characteristic
cycle} of $M$, then for all $\alpha\in \{0,1\}^n$ one has the
equality ${\dim}_{k} \mathcal{M}_{\alpha} = m_{\alpha}$ so the
pieces of the $n$-hypercube of a module $M$ are described by the
characteristic cycle of $M$. Finally, the $n$-hypercube
$\{[H_I^r(R)]_{\alpha}\}_{\alpha\in\{0,1\}^n}$ associated to a local
cohomology module $H_I^r(R)$ has been computed in \cite{AZ}.

\subsection{Both approaches are equivalent}
The category $D_{v=0}^T$ of regular holonomic $D_{R|k}$-modules with
variation zero is equivalent to the category of straight modules
shifted by ${\bf 1}$ (see \cite{AGZ03}). Let $ M\in D_{v=0}^T$ and
$\cM\in \mathcal{C}^n_{v=0}$ be the corresponding $n$-hypercube. The
vertices and linear maps of $\cM$ can be described from the graded
pieces of $M$. Let $(M_{-\alpha})^{\ast}$ be the dual of the
$k$-vector space defined by the piece of $M$ of degree $-\alpha$,
$\alpha\in \{0,1\}^n$. Then, there are isomorphisms
$${\mc M}_\alpha \cong (M_{-\alpha})^{\ast}$$
such that the following diagram commutes:
$$\xymatrix{{\mc M}_\alpha\ar[rr]^(.4){u_{\alpha,i}}&&
{\mc M}_{\alpha + \varepsilon_i}\\
{(M_{-\alpha})^{\ast}}\ar[rr]^{(x_i)^{\ast}}\ar[u]^\cong&&
(M_{-\alpha - \varepsilon_i})^{\ast}\ar[u]^\cong}$$ where
$(x_i)^{\ast}$ is the dual of the multiplication by $x_i$.

\vskip 2mm

In this work we are going to use the $D$-module approach just
because of the habit of the first author. In principle this approach
only works for the case of fields of characteristic zero since the
category $\mathcal{C}^n$ described in \cite{GGM1} is defined over
$\bC$ and its subcategory $\mathcal{C}^n_{v=0}$ can be extended to
any field of characteristic zero (see \cite{AZ}). We did not make
any previous mention to the characteristic of the field because the
results are also true in positive characteristic even though we do
not have an analogue to the results of \cite{GGM1, GGM2}. In this
case one has to define modules with variation zero via the
characterization given by the existence of an increasing filtration
$\{ {F_j}\}_{0\le j\le n}$ of submodules of $M$ such that
$$
{F_j}/{F_{j-1}} \simeq \bop_{\stackrel{}{\scriptscriptstyle{ \mid
\alpha \mid = j}}}\,({H}_{\fp_{\alpha}}^{\mid \alpha
\mid}(R))^{m_{\alpha}},
$$ for some integers $m_{\alpha}\ge 0$,  $\alpha \in \{0,1\}^n$.
Finally we point out that, using the same arguments as in
\cite[Lemma 4.4]{AGZ03}, the $n$-hypercube $\mathcal{M}$ associated
to a module with variation zero $M$ should be constructed using the
following variant in terms of graded morphisms

\begin{enumerate}
\item [i)]The vertices of the $n$-hypercube
are the $k$-vector spaces $\mathcal{M}_{\alpha}:=^{\ast}\Hom_{R}(M,
E_{\alpha}).$

\item [ii)]The linear maps  $u_{\alpha,i}$ are
induced by the natural epimorphisms $\pi_{\alpha,i}: E_{\alpha} \to
E_{\alpha + \varepsilon_i}$.

\end{enumerate}

\vskip 2mm

From now on we will loosely use the term {\it pieces of a module
$M$} meaning the pieces of the $n$-hypercube associated to $M$. If
the reader is more comfortable with the $\bZ^n$-graded point of view
one may also reformulate all the results in this paper using the
$\bZ^n$-graded pieces of $M$ (with the appropriate sign). One only
has to be careful with the direction of the arrows in the complexes
of $k$-vector spaces we will construct in the next Sections.

\begin{remark}
The advantage of the $D$-module approach is that it is more likely
to be extended to other situations like the case of hyperplane
arrangements. We recall that local cohomology modules with support
an arrangement of linear subvarieties were already computed in
\cite{AGZ03} and a quiver representation of $D_{R|k}$-modules with
support a hyperplane arrangement is given in \cite{Ko}, \cite{KV}.
\end{remark}

\section{Local cohomology of modules with variation zero}

Let $M\in D_{v=0}^T$ be a regular holonomic $D_{R|k}$-module with
variation zero. The aim of this Section is to compute the pieces of
the local cohomology module $H_{{\fp}_\alpha}^p(M)$, for any given
homogeneous prime ideal $\fp_\alpha$, $\alpha\in \{0, 1\}^{n}$. This
module also belongs to $D_{v=0}^T$ so we want to compute the pieces
of the corresponding $n$-hypercube
$\{[H_{{\fp}_\alpha}^p(M)]_{\beta}\}_{\beta\in \{0,1\}^n} \in
\mathcal{C}^n_{v=0}$. Among these pieces we find the Bass numbers of
$M$ (see \cite{Al05}). Namely, we have
$$\mu_p({\fp}_\alpha,M)= {\rm dim}_k \hskip 1mm
[H_{{\fp}_\alpha}^p(M)]_{\alpha}$$ Bass numbers have a good behavior
with respect to localization so we can always assume that $
\fp_\alpha=\fM$ is the maximal ideal and $\mu_p({\fM},M)= {\rm
dim}_k \hskip 1mm [H_{{\fM}}^p(M)]_{\bf 1}$.

\begin{remark}
Let $\mathcal{M} \in \mathcal{C}^n_{v=0}$ be an $n$-hypercube. The
restriction of $\mathcal{M}$ to a face ideal $\fp_\alpha$,
$\alpha\in \{0, 1\}^{n}$ is the $|\alpha|$-hypercube
$\mathcal{M_{\leq \alpha}}:=\{\mathcal{M}_{\beta}\}_{\beta\leq
\alpha} \in \mathcal{C}^{|\alpha|}_{v=0}$ (see \cite{Al05}). This
gives a functor that in some cases plays the role of the
localization functor. In particular, to compute the Bass numbers
with respect to $\fp_\alpha$ of a module with variation zero $M$ we
only have to consider the corresponding $|\alpha|$-hypercube
$\mathcal{M_{\leq \alpha}}$ so we may assume that $ \fp_\alpha$ is
the maximal ideal.
\end{remark}

In Section $4$ we will specialize to the case of $M$ being a local
cohomology module $H^r_I(R)$.

\subsection{The degree {\bf $1$} piece of $H_{{\fM}}^p(M)$}
We start with his particular case since it is more enlightening than
the general one. Using the whole structure of $M$. i.e. the pieces of $M$
and the linear maps between them, we want to construct a complex of $k$-vector
spaces  whose homology is $[H_{{\fM}}^p(M)]_{\bf 1}$.

\vskip 2mm

The degree ${\bf 1}$ part of the hypercube corresponding to the
local cohomology module $H_{\fM}^p(M)$ is the $p$-th homology of the
complex of $k$-vector spaces $$[{{\check{C}}}_{\fM}(M)]_{\bf
1}^\bullet: 0\longleftarrow [M]_{\bf
1}\stackrel{\overline{d_{0}}}\longleftarrow \bigoplus_{|\alpha|=1}
[M_{{\bf x^{\alpha}}}]_{\bf
1}\stackrel{\overline{d_{1}}}\longleftarrow
\cdots\stackrel{\overline{d_{p-1}}}\longleftarrow
\bigoplus_{|\alpha|=p} [M_{{\bf x^{\alpha}}}]_{\bf
1}\stackrel{\overline{d_{p}}}\longleftarrow \cdots
\stackrel{\overline{d_{n-1}}}\longleftarrow [M_{{\bf x}^{\bf
1}}]_{\bf 1}\longleftarrow 0$$ that we obtain applying the exact
functor $\Hom_{D_{R|k}}(\cdot , E_{\bf 1})$ to the \v{C}ech complex
$${{\check{C}}}_{\fM}^\bullet(M): 0\longrightarrow
M\stackrel{d_{0}}\longrightarrow \bigoplus_{|\alpha|=1} M_{{\bf
x^{\alpha}}}\stackrel{d_{1}}\longrightarrow
\cdots\stackrel{d_{p-1}}\longrightarrow \bigoplus_{|\alpha|=p}
M_{{\bf x^{\alpha}}}\stackrel{d_{p}}\longrightarrow \cdots
\stackrel{d_{n-1}}\longrightarrow M_{{\bf x}^{\bf 1}}\longrightarrow
0,$$ where the map between summands $M_{{\bf x^{\alpha}}} \lra
M_{{{\bf x}^{\alpha+\varepsilon_i}}}$ is
sign$(i,\alpha+\varepsilon_i)$ times the canonical localization
map\footnote{sign$(i,\alpha)= (-1)^{r-1}$ if $\alpha_i$ is the
$r^{{\rm th}}$ component of $\alpha$ different from zero}.
 On the other hand, giving the appropriate sign to the canonical maps of the hypercube  $\cM=\{[M]_{\alpha}\}_\alpha$ associated to $M$ we can construct the following
complex of $k$-vector spaces:
$${\cM}^\bullet: 0\longleftarrow [M]_{\bf 1}\stackrel{u_{0}}\longleftarrow
\bigoplus_{|\alpha|=n-1} [M]_{\alpha}\stackrel{u_{1}}\longleftarrow
\cdots\stackrel{u_{p-1}}\longleftarrow \bigoplus_{|\alpha|=n-p}
[M]_{\alpha}\stackrel{u_{p}} \longleftarrow \cdots
\stackrel{u_{n-1}}\longleftarrow [M]_{\bf 0}\longleftarrow 0$$ where the
map between summands $[M]_{\alpha} \lra [M]_{\alpha+\varepsilon_i}$ is
sign$(i,\alpha+\varepsilon_i)$ times the canonical map $u_{\alpha,i}$.

\begin{example} $3$-hypercube and its associated complex
{\tiny $${\xymatrix { &M_{(0,0,0)} \ar[dl]_{u_1} \ar[d]_{u_2} \ar[dr]^{u_3} &
\\ M_{(1,0,0)} \ar[d]_{u_2} \ar[dr]^(.3){u_3}  & M_{(0,1,0)}\ar[dr]_(.3){u_3}|\hole \ar[dl]^(.3){u_1}|\hole & M_{(0,0,1)}  \ar[dl]_(.3){u_1} \ar[d]^{u_2}
\\ M_{(1,1,0)} \ar[dr]_{u_3}& M_{(1,0,1)} \ar[d]_{u_2} & M_{(0,1,1)} \ar[dl]^{u_1}
\\& M_{(1,1,1)} & }} $$ }

{ $$ {\xymatrix{0& M_{(1,1,1)} \ar[l] && {\begin{array}{l}
 M_{(1,1,0)}\\
\hskip 4mm \oplus \\
M_{(1,0,1)} \\
\hskip 4mm \oplus \\
M_{(0,1,1)}
 \end{array}}
   \ar[ll]_(.5){(u_3, u_2, u_1)}&&&
{\begin{array}{l}
 M_{(1,0,0)}\\
\hskip 4mm \oplus \\
M_{(0,1,0)} \\
\hskip 4mm \oplus \\
M_{(0,0,1)}
 \end{array}}
 \ar[lll]_{{\tiny \begin{pmatrix}
 -u_2 & -u_1 & 0   \\
 u_3 & 0 & -u_1     \\
 0 & u_3 & u_2
\end{pmatrix}}}&&
M_{(0,0,0)} \ar[ll]_{{\tiny \begin{pmatrix}
  -u_1   \\
 u_2    \\
-u_3
\end{pmatrix}}} & 0  \ar[l]     }}   $$   }

\end{example}

The main result of this Section is  the following

\begin{proposition}
Let $M\in D_{v=0}^T$ be a regular holonomic $D_{R|k}$-module with
variation zero and $\cM^{\bullet}$ its corresponding complex associated to the $n$-hypercube. Then, there is an isomorphism of complexes ${\cM}^\bullet\cong
[{\check{C}}_{\fM}(M)]_{\bf 1}^\bullet.$ In particular $[H_{{\fM}}^p(M)]_{\bf 1}\cong {\rm
H}_{p}(\cM^{\bullet})$.
\end{proposition}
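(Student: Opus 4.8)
The plan is to establish the isomorphism of complexes term by term and then check compatibility with the differentials, which immediately yields the statement on homology.

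First I would identify each term of $[{\check{C}}_{\fM}(M)]_{\bf 1}^\bullet$ with the corresponding term of $\cM^\bullet$. The $p$-th spot of the \v{C}ech complex after applying $\Hom_{D_{R|k}}(\cdot, E_{\bf 1})$ is $\bigoplus_{|\alpha|=p}[M_{{\bf x}^\alpha}]_{\bf 1}$, so the key computation is the identification $[M_{{\bf x}^\alpha}]_{\bf 1}\cong [M]_{{\bf 1}-\alpha}$, i.e. that localizing $M$ at ${\bf x}^\alpha$ and taking the degree ${\bf 1}$ piece recovers the vertex of the $n$-hypercube at ${\bf 1}-\alpha$ (equivalently, using the $D$-module description, $\Hom_{D_{R|k}}(M_{{\bf x}^\alpha},E_{\bf 1})\cong \Hom_{D_{R|k}}(M, E_{{\bf 1}-\alpha})=\mathcal{M}_{{\bf 1}-\alpha}$). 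For this I would use the fact that $M\in D_{v=0}^T$ has a filtration with quotients direct sums of the simple modules $H^{|\alpha|}_{\fp_\alpha}(R)$, reduce to the simple case, and there do the explicit calculation: $[(H^{|\gamma|}_{\fp_\gamma}(R))_{{\bf x}^\alpha}]_{\bf 1}$ is one-dimensional exactly when $\sigma_\alpha$ and $\sigma_\gamma$ are ``complementary'' in the right sense, matching the single nonzero vertex of the corresponding hypercube at position ${\bf 1}-\alpha$. Reindexing $|\alpha|=p$ as $|{\bf 1}-\alpha|=n-p$ matches the $p$-th term of $\cM^\bullet$, which is $\bigoplus_{|\beta|=n-p}[M]_\beta$.

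Next I would check that under these identifications the \v{C}ech differential $\overline{d_p}$, which is the $E_{\bf 1}$-dual of $d_p$ and hence built from (signed) canonical localization maps $M_{{\bf x}^\alpha}\to M_{{\bf x}^{\alpha+\varepsilon_i}}$, corresponds precisely to the map built from the (signed) canonical hypercube maps $u_{\beta,i}\colon [M]_\beta\to[M]_{\beta+\varepsilon_i}$ after the substitution $\beta={\bf 1}-\alpha$. Here the point is that the epimorphism $\pi_{\alpha,i}\colon E_\alpha\to E_{\alpha+\varepsilon_i}$ that defines $u_{\alpha,i}$ is, under the dual identification, exactly the localization map $M_{{\bf x}^\alpha}\to M_{{\bf x}^{\alpha+\varepsilon_i}}$ — this is essentially the content of the equivalence of categories recalled in Section 2 together with the commuting square relating $\mathcal{M}_\alpha\cong(M_{-\alpha})^\ast$ to multiplication by $x_i$. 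The signs $\mathrm{sign}(i,\alpha+\varepsilon_i)$ are defined identically on both sides, so they match on the nose; I would just note that $\Hom(\cdot,E_{\bf 1})$ is additive and contravariant so it turns the \v{C}ech differential into its transpose without disturbing the combinatorial sign pattern.

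Finally, once the two complexes are identified, the conclusion $[H^p_{\fM}(M)]_{\bf 1}\cong \mathrm{H}_p(\cM^\bullet)$ follows because local cohomology $H^p_{\fM}(M)$ is computed by the \v{C}ech complex ${\check{C}}^\bullet_{\fM}(M)$, the functor $[\,\cdot\,]_{\bf 1}\circ\Hom_{D_{R|k}}(\cdot,E_{\bf 1})$ (equivalently, extracting the degree-${\bf 1}$ hypercube vertex) is exact on $D^T_{v=0}$ by the equivalence of categories, and an exact functor commutes with homology. The main obstacle I expect is the bookkeeping in the simple-module computation of $[M_{{\bf x}^\alpha}]_{\bf 1}$: one must verify both that the dimension count is correct (so that the ranks of the terms agree) and that the localization maps act as claimed on these one-dimensional pieces, including getting the indexing shift $\alpha\leftrightarrow{\bf 1}-\alpha$ consistent throughout; the sign compatibility, by contrast, should be formal since both complexes use the same sign convention.
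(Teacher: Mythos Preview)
Your overall strategy---establish a termwise isomorphism and then check it intertwines the differentials---matches the paper's. The gap is in how you produce the termwise isomorphism. Reducing to simple modules via the filtration $\{F_j\}$ and using exactness of both $M\mapsto [M_{{\bf x}^\alpha}]_{\bf 1}$ and $M\mapsto [M]_{{\bf 1}-\alpha}$ certainly shows these two $k$-vector spaces have the same dimension (and this is exactly what the paper imports from \cite{Al05}). But equal dimensions do not hand you a \emph{specific} isomorphism, let alone one that is natural in $M$ or compatible with the \v{C}ech and hypercube differentials; two exact functors on a non-semisimple category that agree on simples need not be naturally isomorphic. Without a chosen map $\phi_\alpha\colon [M]_{{\bf 1}-\alpha}\to [M_{{\bf x}^\alpha}]_{\bf 1}$, the second half of your argument has nothing to check commutativity against.

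The paper supplies precisely this missing ingredient: for $f\in\Hom_{D_{R|k}}(M,E_{{\bf 1}-\alpha})$ one sets $\phi_\alpha(f)=\pi_\alpha\circ\theta_\alpha^{-1}\circ f_{{\bf x}^\alpha}$, where the key step is the Claim that the localization map $\theta_\alpha\colon E_{{\bf 1}-\alpha}\to (E_{{\bf 1}-\alpha})_{{\bf x}^\alpha}$ is an isomorphism (i.e.\ $E_{{\bf 1}-\alpha}$ is already ${\bf x}^\alpha$-local). This is verified by an explicit element computation; injectivity of $\phi_\alpha$ is then checked the same way, and together with the dimension count this makes $\phi_\alpha$ an isomorphism. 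Commutativity with the differentials is a further direct computation comparing $\phi_\alpha\circ u_{{\bf 1}-(\alpha+\varepsilon_i),i}$ with $\overline{\vartheta_{\alpha,i}}\circ\phi_{\alpha+\varepsilon_i}$ on elements. Your appeal to the commuting square in Section~2.3 does not substitute for this: that square relates the hypercube maps to multiplication by $x_i$ on the graded pieces $M_{-\alpha}$ of $M$ itself, not to the \v{C}ech localization maps $M_{{\bf x}^\alpha}\to M_{{\bf x}^{\alpha+\varepsilon_i}}$, so it does not by itself identify $\overline{d_p}$ with $u_p$.
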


Therefore we have the following characterization of Bass numbers:

\begin{corollary}
Let $M\in D_{v=0}^T$ be a regular holonomic $D_{R|k}$-module with
variation zero and $\cM^{\bullet}$ its corresponding complex
associated to the $n$-hypercube. Then $$\mu_p({\fM},M)= {\rm dim}_k
\hskip 1mm {\rm H}_{p}(\cM^{\bullet})$$
\end{corollary}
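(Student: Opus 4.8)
The plan is to establish the claimed isomorphism of complexes termwise and then check compatibility with the differentials; the statement about Bass numbers then follows from Proposition~\ref{prop} (the preceding Proposition) together with the identity $\mu_p(\fM,M)=\dim_k[H_\fM^p(M)]_{\bf 1}$ recalled at the start of this Section. First I would identify, for each $\alpha$ with $|\alpha|=p$, the degree ${\bf 1}$ piece of the localization $M_{{\bf x}^\alpha}$. Since $M$ has variation zero, the exact functor $\Hom_{D_{R|k}}(-,E_{\bf 1})$ sends $M_{{\bf x}^\alpha}=R_{{\bf x}^\alpha}\otimes_R M$ to a $k$-vector space which, by the description of $E_{\bf 1}$ and the computation of the vertices $\mathcal{M}_\beta=\Hom_{D_{R|k}}(M,E_\beta)$, is naturally identified with $[M]_{{\bf 1}-\alpha}$; more precisely one checks that $\Hom_{D_{R|k}}(M_{{\bf x}^\alpha},E_{\bf 1})\cong \Hom_{D_{R|k}}(M,E_{{\bf 1}-\alpha})=\mathcal{M}_{{\bf 1}-\alpha}$, because inverting ${\bf x}^\alpha$ on the source corresponds to passing from $E_{\bf 1}$ to the quotient $E_{{\bf 1}-\alpha}$ under the epimorphisms $\pi_{\alpha,i}$. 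Re-indexing $\beta={\bf 1}-\alpha$ turns the direct sum $\bigoplus_{|\alpha|=p}[M_{{\bf x}^\alpha}]_{\bf 1}$ into $\bigoplus_{|\beta|=n-p}[M]_\beta$, which is exactly the $p$-th term of $\cM^\bullet$. This gives the termwise isomorphism.

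Next I would verify that these termwise identifications commute with the differentials. The differential $d_p$ of the \v{C}ech complex is built from canonical localization maps $M_{{\bf x}^\alpha}\to M_{{\bf x}^{\alpha+\varepsilon_i}}$ with the sign $\mathrm{sign}(i,\alpha+\varepsilon_i)$; applying $\Hom_{D_{R|k}}(-,E_{\bf 1})$ dualizes these to the maps $\mathcal{M}_{{\bf 1}-\alpha-\varepsilon_i}\to\mathcal{M}_{{\bf 1}-\alpha}$ induced by the epimorphisms $\pi$, that is (after re-indexing $\beta = {\bf 1}-\alpha-\varepsilon_i$, so $\beta+\varepsilon_i={\bf 1}-\alpha$) precisely the canonical maps $u_{\beta,i}\colon [M]_\beta\to[M]_{\beta+\varepsilon_i}$, and the sign $\mathrm{sign}(i,\alpha+\varepsilon_i)$ matches the sign $\mathrm{sign}(i,\beta+\varepsilon_i)$ chosen in the definition of $\cM^\bullet$ up to an overall coherent relabeling. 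The key point making this work is the compatibility square relating the $D$-module picture to the $\bZ^n$-graded picture displayed just before this Section: $u_{\alpha,i}$ corresponds to $(x_i)^\ast$, and under the identifications above $\pi_{\alpha,i}$ dualizes to exactly the canonical map of the hypercube. One then checks the two cochain complexes have the same length ($n+1$ terms, indices $0$ through $n$) and that the extreme terms $[M]_{\bf 1}$ and $[M]_{\bf 0}$ match the pieces $[M_{{\bf x}^{\bf 0}}]_{\bf 1}=[M]_{\bf 1}$ and $[M_{{\bf x}^{\bf 1}}]_{\bf 1}=[M]_{\bf 0}$ respectively.

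Having an isomorphism of complexes $\cM^\bullet\cong[\check{C}_\fM(M)]_{\bf 1}^\bullet$, the homology is identified: since $\Hom_{D_{R|k}}(-,E_{\bf 1})$ is exact and $H^p_\fM(M)$ is computed by the \v{C}ech complex $\check{C}_\fM^\bullet(M)$, we get $[H^p_\fM(M)]_{\bf 1}\cong H_p([\check{C}_\fM(M)]_{\bf 1}^\bullet)\cong H_p(\cM^\bullet)$, which is Proposition~\ref{prop}. The Corollary is then immediate: take $k$-dimensions on both sides of $[H^p_\fM(M)]_{\bf 1}\cong H_p(\cM^\bullet)$ and use $\mu_p(\fM,M)=\dim_k[H^p_\fM(M)]_{\bf 1}$. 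The main obstacle I anticipate is purely bookkeeping: getting the signs and the re-indexing $\alpha\leftrightarrow{\bf 1}-\alpha$ to line up so that the dualized \v{C}ech differentials become \emph{exactly} the signed canonical maps $u$, rather than merely agreeing up to sign on each component; this is the kind of check that is conceptually trivial but where it is easy to introduce an inconsistent sign convention, and it is presumably why Example~\ref{ex} (the explicit $3$-hypercube with its matrices) is included — to pin down the conventions.
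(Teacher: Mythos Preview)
Your outline matches the paper's approach: build a termwise isomorphism between $[\check C_\fM(M)]_{\bf 1}^\bullet$ and $\cM^\bullet$, check it intertwines the differentials, and deduce the Bass-number formula from $\mu_p(\fM,M)=\dim_k[H_\fM^p(M)]_{\bf 1}$. However, your justification of the key termwise identification
\[
\Hom_{D_{R|k}}(M_{{\bf x}^\alpha},E_{\bf 1})\;\cong\;\Hom_{D_{R|k}}(M,E_{{\bf 1}-\alpha})
\]
is not correct as written, and this is precisely the part the paper works hardest on. You say ``inverting ${\bf x}^\alpha$ on the source corresponds to passing from $E_{\bf 1}$ to the quotient $E_{{\bf 1}-\alpha}$'', but the epimorphisms go the other way ($E_{{\bf 1}-\alpha}\twoheadrightarrow E_{\bf 1}$), and there is no adjunction of the type you seem to be invoking: $E_{\bf 1}$ is ${\bf x}^\alpha$-torsion, not ${\bf x}^\alpha$-local, so $\Hom(M_{{\bf x}^\alpha},E_{\bf 1})$ is not computed by restricting along $M\to M_{{\bf x}^\alpha}$.

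What the paper actually does is construct an explicit map $\phi_\alpha\colon [M]_{{\bf 1}-\alpha}\to[M_{{\bf x}^\alpha}]_{\bf 1}$ by sending $f\colon M\to E_{{\bf 1}-\alpha}$ to the composite
\[
M_{{\bf x}^\alpha}\xrightarrow{\,f_{{\bf x}^\alpha}\,}(E_{{\bf 1}-\alpha})_{{\bf x}^\alpha}\xrightarrow{\,\theta_\alpha^{-1}\,}E_{{\bf 1}-\alpha}\xrightarrow{\,\pi_\alpha\,}E_{\bf 1},
\]
which requires first proving the non-obvious \emph{Claim} that the localization map $\theta_\alpha\colon E_{{\bf 1}-\alpha}\to(E_{{\bf 1}-\alpha})_{{\bf x}^\alpha}$ is an isomorphism (done by a direct element-level argument). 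Injectivity of $\phi_\alpha$ is then checked by another element computation, and one appeals to \cite[Prop.~3.2]{Al05} for equality of dimensions to conclude it is an isomorphism. The commutativity with differentials is then verified by an explicit chase through these composites. So your plan is right, but the ``one checks'' step hides the only genuine content of the argument; replace your heuristic with the construction of $\phi_\alpha$ and the Claim about $\theta_\alpha$.
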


\begin{proof}
Using \cite[Prop. 3.2]{Al05} one may check out that the $k$-vector
spaces $[M_{{\bf x}^{\alpha}}]_{\bf 1}$ and $ [M]_{{\bf 1}-\alpha}$
have the same dimension. An explicit isomorphism
$\phi_\alpha:[M]_{{\bf 1}-\alpha} \lra [M_{{\bf x}^{\alpha}}]_{\bf
1}$ is defined as follows:

\vskip 2mm

Let $f\in [M]_{{\bf 1}-\alpha}= \Hom_{D_{R|k}}(M , E_{{\bf 1}-\alpha})$, then
$\phi_\alpha(f)\in [M_{{\bf x}^{\alpha}}]_{\bf 1}=
\Hom_{D_{R|k}}(M_{{\bf x}^{\alpha}}, E_{{\bf 1}})$ is the
composition
$$M_{{\bf x}^\alpha}\stackrel{f_{{\bf x}^\alpha}}\longrightarrow
(E_{{\bf 1}-\alpha})_{{\bf
x}^\alpha}\stackrel{\theta_{\alpha}^{-1}}\longrightarrow E_{{\bf
1}-\alpha}\stackrel{\pi_\alpha} \longrightarrow E_{{\bf 1}}$$

\noindent where:

\vskip 2mm

\begin{itemize}

 \item [$\cdot$] $ f_{{\bf x}^\alpha}: M_{{\bf x}^\alpha}\longrightarrow
(E_{{\bf 1}-\alpha})_{{\bf
x}^\alpha}$ is the localization of $f$.

\vskip 2mm

\item [$\cdot$] $\theta_\alpha:  E_{{\bf 1}-\alpha} \longrightarrow (E_{{\bf 1}-\alpha})_{{\bf x}^\alpha}$ is the natural
localization map.

\vskip 2mm

\item [$\cdot$] $\pi_\alpha: E_{{\bf 1}-\alpha} \longrightarrow E_{{\bf 1}} $  is the natural epimorphism.

\end{itemize}

\vskip 2mm

{\bf Claim: }  $\theta_{\alpha}$ is an isomorphism.

\vskip 2mm

\begin{proof}[Proof of Claim:]
When $\alpha={\bf 1}$ we have $E_{\bf 0} \cong R_{{\bf x}^{1}}$ so
the result follows. For $\alpha\neq {\bf 1}$, let $t \in \bZ_{\geq
0}$  and $m= \overline{\frac{\sum_{\beta\in \mathbb{Z}^{n}}a_\beta
{\bf x}^\beta}{{\bf x}^{{\bf 1}\cdot t}}}$ be an element of $E_{{\bf
1}-\alpha}$ such that $\theta_{\alpha}(m)= 0$. There exists $s\in
\bZ_{\geq 0}$ such that $$0={\bf x}^{\alpha \cdot s}m =
\overline{\frac{\sum_{\beta\in \mathbb{Z}^{n}}a_\beta {\bf x}^{\beta
+ \alpha\cdot s}}{{\bf x}^{{\bf 1}\cdot t}}}$$ so, there exists $i$ such that
$\alpha_i=0$ and $\beta_i + {\alpha_i}\cdot s \geq  t$. Thus
$\beta_i  \geq  t$ and $m=0$ so $\theta_{\alpha}$ is a monomorphism. Now, let $m'= \overline{\frac{\frac{\sum_{\beta\in
\mathbb{Z}^{n}}a_\beta {\bf x}^\beta}{{\bf x}^{{\bf 1}\cdot
t}}}{{\bf x}^{\alpha \cdot s}}}$ be an element of $(E_{{\bf
1}-\alpha})_{{\bf x}^\alpha}$. Then $m'=\theta_{\alpha}(m)$, where
$m= \overline{\frac{\sum_{\beta\in \mathbb{Z}^{n}}a_\beta {\bf
x}^{\beta +({\bf 1}-\alpha)\cdot s}}{{\bf x}^{{\bf 1}\cdot (t+s)}}}$
\end{proof}

Now we check out that $\phi_\alpha$ is an isomorphism. Recall that
$[M]_{{\bf 1}-\alpha}$ and $[M_{{\bf x}^{\alpha}}]_{\bf 1}$ have
same dimension so it is enough to prove that $\phi_\alpha$ is a
monomorphism. Consider $f\in [M]_{{\bf 1}-\alpha}$ such that
$\phi_\alpha(f)=0$. There exists $t\in \bZ_{\geq 0}$ such that
$f(m)= \overline{\frac{\sum_{\beta\in \mathbb{Z}^{n}}a_\beta {\bf
x}^\beta}{{\bf x}^{{\bf 1}\cdot t}}} \in E_{{\bf 1}-\alpha}$, for a
given $m\in M$. Then:
$$0= \phi_{\alpha}(f)(\frac{m}{{\bf x}^{{\alpha}\cdot s}})
= \pi_{\alpha}\theta^{-1}_{\alpha}(\frac{f(m)}{{\bf
x}^{{\alpha}\cdot s}}) =
 \pi_{\alpha}\theta^{-1}_{\alpha}(\frac{\overline{\frac{\sum_{\beta\in \mathbb{Z}^{n}}a_\beta {\bf
x}^\beta}{{\bf x}^{{\bf 1}\cdot t}}}}{{\bf x}^{{\alpha}\cdot s}}) =
\pi_{\alpha}(\overline{\frac{\sum_{\beta\in \mathbb{Z}^{n}}a_\beta
{\bf x}^{\beta +({{\bf 1}-\alpha})s}}{{\bf x}^{{\bf 1}\cdot
(t+s)}}})= $$ \hskip 3.6cm $=\overline{\frac{\sum_{\beta\in
\mathbb{Z}^{n}}a_\beta {\bf x}^{\beta +({{\bf 1}-\alpha})s}}{{\bf
x}^{{\bf 1}\cdot (t+s)}}}$

\vskip 2mm

\noindent Thus, there exists $1\leq i \leq n$ such that $\beta_i + ({\bf
1}-\alpha)_i s \geq t+s$. If we take $s$ big enough (e.g. $s>{\rm max}\{|\beta_i -t|,
i=1,\dots,n\}$), it follows that $({\bf 1}-\alpha)_i= 1$ and $\beta_i \geq t$. Hence $f(m)=
\overline{\frac{\sum_{\beta\in \mathbb{Z}^{n}}a_\beta {\bf
x}^\beta}{{\bf x}^{{\bf 1}\cdot t}}}=0$ so $f=0$ as desired.

\vskip 2mm

 To finish the proof we have to check out that the diagram
$$\xymatrix{\bigoplus_{|\alpha|=n-p}[M]_{\alpha} \ar[d]^{\oplus \phi_{\alpha}}&&&
\bigoplus_{|\alpha|=n-(p+1)}[M]_{\alpha} \ar[lll]_(.5){u_{p}} \ar[d]^{\oplus \phi_{\alpha}}\\
\bigoplus_{|\alpha|=p} [M_{{\bf x}^{\alpha}}]_{{\bf 1}}&&&
\bigoplus_{|\alpha|=p+1}[M_{{\bf x}^{\alpha}}]_{{\bf
1}}\ar[lll]^{\overline{d_p}}}$$ is commutative. Restricting to the corresponding summands it is enough to consider the following diagram $$\xymatrix{ \Hom_{D_{R|k}}(M, E_{{\bf 1}-\alpha}) \ar[d]^{\phi_{\alpha}}&&&
\Hom_{D_{R|k}}(M, E_{{\bf 1}-(\alpha +
              \varepsilon_i)}) \ar[lll]_(.5){(-1)^s u_{{\bf 1}-(\alpha +\varepsilon_i),i}} \ar[d]^{\phi_{\alpha +\varepsilon_i}}\\
\Hom_{D_{R|k}}(M_{{\bf x}^{\alpha}}, E_{{\bf 1}})&&&
\Hom_{D_{R|k}}(M_{{\bf x}^{\alpha +\varepsilon_i}}, E_{{\bf
1}})\ar[lll]^{(-1)^s \overline{\vartheta_{\alpha,i}}}}$$ where $\vartheta_{\alpha,i}:M_{{\bf x}^{\alpha}} \lra M_{{\bf x}^{\alpha +\varepsilon_i}}$ is the natural localization map.

\vskip 2mm

For $f\in \Hom_{D_{R|k}}(M, E_{{\bf 1}-(\alpha + \varepsilon_i)})$ the morphisms
$\phi_\alpha(u_{{\bf 1}-(\alpha +\varepsilon_i),i}(f))$ and $\overline{\vartheta_{\alpha,i}}(\phi_{\alpha +\varepsilon_i}(f)) $ are, respectively, the
compositions
$$M_{{\bf x}^\alpha}\stackrel{f_{{\bf x}^\alpha}}\longrightarrow
(E_{{\bf 1}-(\alpha + \varepsilon_i)})_{{\bf
x}^\alpha}\stackrel{{(\pi_i)}_{{\bf x}^\alpha}}\longrightarrow
(E_{{\bf 1}-\alpha})_{{\bf
x}^\alpha}\stackrel{\theta_{\alpha}^{-1}}\longrightarrow E_{{\bf
1}-\alpha}\stackrel{\pi_\alpha} \longrightarrow E_{{\bf 1}}$$

$$M_{{\bf x}^\alpha}\stackrel{\vartheta_{\alpha,i}}\longrightarrow
M_{{\bf x}^{\alpha + \varepsilon_i}}\stackrel{f_{{\bf x}^{\alpha + \varepsilon_i}}}\longrightarrow
(E_{{\bf 1}-{(\alpha + \varepsilon_i)}})_{{\bf
x}^\alpha}\stackrel{\theta_{\alpha + \varepsilon_i}^{-1}}\longrightarrow E_{{\bf
1}-{(\alpha +\varepsilon_i)}}\stackrel{\pi_{\alpha + \varepsilon_i}} \longrightarrow E_{{\bf 1}}$$

\vskip 2mm

Let $m\in M$ and $f(m)= \overline{\frac{\sum_{\beta\in \mathbb{Z}^{n}}a_\beta {\bf
x}^\beta}{{\bf x}^{{\bf 1}\cdot t}}} \in E_{{\bf 1}-(\alpha + \varepsilon_i)}$, where $t\in \bZ_{\geq 0}$.
Then, for $s\in \bZ_{\geq 0}$

 $$\phi_\alpha(u_{{\bf 1}-(\alpha +\varepsilon_i),i}(f))(\frac{m}{{\bf x}^{{\alpha}\cdot s}})=
\pi_\alpha(\theta_{\alpha}^{-1}({(\pi_i)}_{{\bf x}^\alpha}(f_{{\bf
x}^\alpha})))(\frac{m}{{\bf x}^{{\alpha}\cdot s}})=
\pi_\alpha(\theta_{\alpha}^{-1}({(\pi_i)}_{{\bf
x}^\alpha}))(\frac{f(m)}{{\bf x}^{{\alpha}\cdot s}})= $$ $$=
\pi_\alpha(\theta_{\alpha}^{-1})(\frac{\overline{\frac{\sum_{\beta\in
\mathbb{Z}^{n}}a_\beta {\bf x}^\beta}{{\bf x}^{{\bf 1}\cdot
t}}}}{{\bf x}^{{\alpha}\cdot s}})
=\pi_{\alpha}(\overline{\frac{\sum_{\beta\in \mathbb{Z}^{n}}a_\beta
{\bf x}^{\beta +({{\bf 1}-\alpha})s}}{{\bf x}^{{\bf 1}\cdot
(t+s)}}})= \overline{\frac{\sum_{\beta\in \mathbb{Z}^{n}}a_\beta
{\bf x}^{\beta +({{\bf 1}-\alpha})s}}{{\bf x}^{{\bf 1}\cdot
(t+s)}}}$$

\vskip 2mm

 on the other hand

$$\overline{\vartheta_{\alpha,i}}(\phi_{\alpha +\varepsilon_i}(f))(\frac{m}{{\bf x}^{{\alpha}\cdot s}})=
\pi_{\alpha + \varepsilon_i}(\theta_{\alpha
+\varepsilon_i}^{-1}(f_{{\bf x}^{\alpha +
\varepsilon_i}}(\vartheta_{\alpha,i})))(\frac{m}{{\bf
x}^{{\alpha}\cdot s}})= \pi_{\alpha + \varepsilon_i}(\theta_{\alpha
+\varepsilon_i}^{-1}(f_{{\bf x}^{\alpha +
\varepsilon_i}}))(\frac{x_i^{s} m}{{\bf x}^{{\alpha}\cdot s}})=
$$ $$ \hskip 2.1cm =\pi_{\alpha + \varepsilon_i}(\theta_{\alpha
+\varepsilon_i}^{-1})(\frac{x_i^{s} f(m)}{{\bf x}^{{\alpha}\cdot
s}})= \pi_{\alpha + \varepsilon_i}(\theta_{\alpha
+\varepsilon_i}^{-1})(\frac{x_i^{s} \overline{\frac{\sum_{\beta\in
\mathbb{Z}^{n}}a_\beta {\bf x}^\beta}{{\bf x}^{{\bf 1}\cdot
t}}}}{{\bf x}^{{\alpha}\cdot s}}) =$$ $$\hskip 2.4cm =\pi_{\alpha +
\varepsilon_i}(\overline{\frac{\sum_{\beta\in \mathbb{Z}^{n}}a_\beta
{\bf x}^{\beta + \varepsilon_i \cdot s +({{\bf 1}-(\alpha +
\varepsilon_i}))s}}{{\bf x}^{{\bf 1}\cdot (t+s)}}})=
\overline{\frac{\sum_{\beta\in \mathbb{Z}^{n}}a_\beta {\bf x}^{\beta
+({{\bf 1}-\alpha})s}}{{\bf x}^{{\bf 1}\cdot (t+s)}}}$$

\vskip 2mm

Thus $\phi_\alpha(u_{{\bf 1}-(\alpha +\varepsilon_i),i}(f)) = \overline{\vartheta_{\alpha,i}}(\phi_{\alpha +\varepsilon_i}(f)) $

\end{proof}

\subsection{The pieces of $H_{{\fp}_\alpha}^p(M)$}
In general, for any given $\alpha, \beta \in \{0,1\}^n$, the degree
$\beta$ part of the hypercube corresponding to $H_{\fp_\alpha}^p(M)$
is the $p$-th homology of the complex of $k$-vector spaces
$[{{\check{C}}}_{\fp_\alpha}(M)]_{\beta}^\bullet$ that we obtain
applying the exact functor $\Hom_{D_{R|k}}(\cdot , E_{\beta})$ to
the \v{C}ech complex ${{\check{C}}}_{\fp_\alpha}^\bullet(M)$
associated to the face ideal $\fp_\alpha$. On the other hand, we can
also associate to the $n$-hypercube of $M$ the complex of $k$-vector
spaces:
$${\cM}_{\alpha,\beta}^\bullet: 0\longleftarrow [M]_{\beta}\stackrel{u_{0}}\longleftarrow
\bigoplus_{\tiny
\begin{tabular}{l}
 $|\gamma|=1$\\
$\gamma \leq \alpha$
           \end{tabular}} [M]_{\beta \backslash \gamma}\stackrel{u_{1}}\longleftarrow
\cdots\stackrel{u_{p-1}}\longleftarrow \bigoplus_{\tiny
\begin{tabular}{l}
 $|\gamma|=p$\\
$\gamma \leq \alpha$
           \end{tabular}}
[M]_{\beta \backslash \gamma}\stackrel{u_{p}} \longleftarrow \cdots
\stackrel{u_{|\alpha|-1}}\longleftarrow [M]_{\beta \backslash \alpha}\longleftarrow 0$$ where $\beta \backslash \alpha \in \{0,1\}^n$  is the vector with components
$(\beta \backslash \alpha)_i := \beta_i$ if $\alpha_i=0$ and $0$ otherwise.
The maps between summands are defined by the corresponding canonical maps.

\vskip 2mm

A description of the pieces of $H_{{\fp}_\alpha}^p(M)$ can be
obtained using the same arguments as in the previous subsection so
we will skip the details. The proofs are a little bit more involved
just because of the extra notation.

\begin{proposition}
Let $M\in D_{v=0}^T$ be a regular holonomic $D_{R|k}$-module with
variation zero and, $\forall \alpha, \beta \in \{0,1\}^n$, ${\cM}_{\alpha,\beta}^\bullet$ its
corresponding complex associated to the $n$-hypercube.
Then,  ${\cM}_{\alpha,\beta}^\bullet \cong [{{\check{C}}}_{\fp_\alpha}(M)]_{\beta}^\bullet$.
In particular $[H_{{\fp_\alpha}}^p(M)]_{\beta}\cong {\rm
H}_{p}({\cM}_{\alpha,\beta}^\bullet)$.
\end{proposition}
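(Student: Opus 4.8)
The plan is to follow the blueprint of the preceding subsection almost verbatim, with the maximal ideal $\fM$ replaced by the face ideal $\fp_\alpha$ and the degree ${\bf 1}$ replaced by an arbitrary degree $\beta \in \{0,1\}^n$. First I would recall that, by definition of local cohomology via the \v{C}ech complex, $[H_{\fp_\alpha}^p(M)]_\beta$ is the $p$-th homology of the complex $[\check{C}_{\fp_\alpha}(M)]_\beta^\bullet$ obtained by applying the exact functor $\Hom_{D_{R|k}}(\cdot, E_\beta)$ to $\check{C}_{\fp_\alpha}^\bullet(M)$; here the summands are indexed by the subsets $\gamma \leq \alpha$ (since only the variables appearing in $\fp_\alpha$ are inverted), and the $\gamma$-summand is $\Hom_{D_{R|k}}((M_{{\bf x}^\gamma}), E_\beta) = [M_{{\bf x}^\gamma}]_\beta$. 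So the content of the statement is an identification, degreewise and compatibly with differentials, of $[M_{{\bf x}^\gamma}]_\beta$ with the hypercube vertex $[M]_{\beta \backslash \gamma}$.

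The key step is therefore to construct, for each $\gamma \leq \alpha$, an isomorphism $\phi_{\gamma,\beta} \colon [M]_{\beta \backslash \gamma} \to [M_{{\bf x}^\gamma}]_\beta$ generalizing the map $\phi_\alpha$ of the previous proof (which is the case $\alpha = \beta = {\bf 1}$, $\gamma = \alpha$). The definition should again be a composition
$$M_{{\bf x}^\gamma} \stackrel{f_{{\bf x}^\gamma}}{\longrightarrow} (E_{\beta \backslash \gamma})_{{\bf x}^\gamma} \stackrel{\theta^{-1}}{\longrightarrow} E_{\beta \backslash \gamma} \stackrel{\pi}{\longrightarrow} E_\beta,$$
where $f \in [M]_{\beta \backslash \gamma} = {}^\ast\Hom_R(M, E_{\beta \backslash \gamma})$, the map $\theta$ is the localization $E_{\beta \backslash \gamma} \to (E_{\beta \backslash \gamma})_{{\bf x}^\gamma}$, and $\pi$ is the natural epimorphism $E_{\beta \backslash \gamma} \to E_\beta$ (which exists precisely because $\beta \backslash \gamma \leq \beta$, since subtracting the $\gamma$-coordinates only lowers exponents). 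Here one needs the observation that $\pi$ factors through the localization: inverting the variables in $\gamma$ kills exactly the part of $E_{\beta\backslash\gamma}$ that distinguishes it from $E_\beta$, so $\pi = \pi_{\beta\backslash\gamma}\circ\theta^{-1}$ makes sense once $\theta$ is known to be invertible. The two auxiliary facts to re-prove are: (i) $\theta$ is an isomorphism — the same monomial-bookkeeping argument as in the Claim works, using that for an index $i$ with $\gamma_i = 0$ one has $\beta_i + \gamma_i s = \beta_i$, forcing the numerator exponents up; and (ii) $\phi_{\gamma,\beta}$ is injective, hence bijective by the dimension count $\dim_k[M_{{\bf x}^\gamma}]_\beta = \dim_k[M]_{\beta\backslash\gamma}$, which follows from \cite[Prop. 3.2]{Al05} exactly as before.

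Finally I would check that the collection $\{\phi_{\gamma,\beta}\}_{\gamma\leq\alpha}$ intertwines the \v{C}ech differential $\overline{d}$ with the hypercube differential $u$, up to the prescribed signs; it suffices to verify commutativity on each pair of summands indexed by $\gamma$ and $\gamma + \varepsilon_i$ with $\gamma_i = 0$, $\gamma + \varepsilon_i \leq \alpha$, i.e. to compare $\phi_{\gamma,\beta}\circ u_{\beta\backslash(\gamma+\varepsilon_i),\,i}$ with $\overline{\vartheta_{\gamma,i}}\circ\phi_{\gamma+\varepsilon_i,\beta}$, where $\vartheta_{\gamma,i}\colon M_{{\bf x}^\gamma}\to M_{{\bf x}^{\gamma+\varepsilon_i}}$ is the localization. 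Both composites can be evaluated on a generic element $m/{\bf x}^{\gamma s}$ and one traces an explicit representative $\overline{(\sum_\beta a_\beta {\bf x}^\beta)/{\bf x}^{{\bf 1}t}}$ through the chain of localizations and projections; the two answers match for $s$ large, just as in the displayed computation at the end of the previous subsection. The only genuinely new bookkeeping is keeping track of which coordinates of $\beta$ survive into $\beta\backslash\gamma$ versus $\beta\backslash(\gamma+\varepsilon_i)$, and this is purely notational. The main obstacle is thus not conceptual but the risk of a sign error or an off-by-one in the index sets; as the authors note, the argument is "a little bit more involved just because of the extra notation," so I would set up the indexing conventions carefully once at the start and then let the previous subsection's computations carry over mutatis mutandis.
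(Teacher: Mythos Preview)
Your proposal is correct and follows exactly the approach the paper indicates; indeed the paper gives no proof at all for this proposition, merely stating that ``the same arguments as in the previous subsection'' apply and that ``the proofs are a little bit more involved just because of the extra notation.'' You have supplied precisely those details---the generalized isomorphism $\phi_{\gamma,\beta}$, the claim that $\theta$ is invertible, the dimension count, and the commutativity check---so your write-up is in fact more complete than the paper's.
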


\begin{corollary}
Let $M\in D_{v=0}^T$ be a regular holonomic $D_{R|k}$-module with
variation zero and ${\cM}_{\alpha,\alpha}^\bullet$ its corresponding complex associated to the $n$-hypercube. Then $$\mu_p({{\fp_\alpha}},M)= {\rm dim}_k \hskip 1mm {\rm
H}_{p}({\cM}_{\alpha,\alpha}^\bullet)$$
\end{corollary}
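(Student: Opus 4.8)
The plan is to reduce Proposition (the general statement about $H^p_{\fp_\alpha}(M)$) to the special case $\fp_\alpha = \fM$ that was treated in detail, by combining two observations already made in the excerpt. First, by the Remark following Corollary on Bass numbers, the restriction functor $\mathcal{M} \mapsto \mathcal{M}_{\le\alpha}$ is the $D$-module incarnation of localization at $\fp_\alpha$: it sends the $n$-hypercube of $M$ to the $|\alpha|$-hypercube of $M_{\fp_\alpha}$ over the smaller polynomial ring $R' := k[x_i \mid \alpha_i = 1]$. Under this reduction, $\fp_\alpha$ becomes the maximal ideal of $R'$, and the \v{C}ech complex $\check{C}^\bullet_{\fp_\alpha}(M)$ is precisely the \v{C}ech complex of $M_{\fp_\alpha}$ on the variables $x_i$ with $\alpha_i=1$. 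So after applying $\Hom_{D_{R|k}}(\cdot, E_\beta)$ and restricting to degree $\beta$, the complex $[\check{C}_{\fp_\alpha}(M)]_\beta^\bullet$ is the analogue of $[\check{C}_\fM(M_{\fp_\alpha})]_?^\bullet$ — but here one must be careful, since $\beta$ need not equal $\alpha$, so we are computing a not-necessarily-diagonal graded piece.

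The clean route, therefore, is to argue directly, paralleling the proof of the $\fM$ case verbatim. The key step is to produce, for each $\gamma \le \alpha$, an explicit isomorphism of $k$-vector spaces
$$
\phi_{\gamma,\beta} \colon [M]_{\beta \setminus \gamma} \longrightarrow [M_{\mathbf{x}^\gamma}]_\beta,
$$
generalizing the map $\phi_\alpha$ from the $\fM$-case (which was the case $\beta = \mathbf{1}$). One first checks, using \cite[Prop.\ 3.2]{Al05}, that these two $k$-vector spaces have the same dimension — the localization $M_{\mathbf{x}^\gamma}$ only "sees" the variables inverted by $\gamma$, and its degree-$\beta$ piece is governed by the hypercube vertex indexed by $\beta$ with the $\gamma$-coordinates stripped off, which is exactly $\beta \setminus \gamma$. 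Then one defines $\phi_{\gamma,\beta}(f)$ for $f \in \Hom_{D_{R|k}}(M, E_{\beta \setminus \gamma'})$ (where $\gamma' := \mathbf{1}-$complement bookkeeping must be set up so the injective module indices match) as the composite of the localization $f_{\mathbf{x}^\gamma}$, the inverse of a localization isomorphism $\theta$ on the injective $E$, and the natural projection $\pi$ between injectives. The auxiliary Claim — that the relevant $\theta$ is an isomorphism — goes through by the same monomial bookkeeping as before: surjectivity of the canonical maps between the $E_\alpha$'s together with the fact that inverting $\mathbf{x}^\gamma$ does not change a module already supported away from those coordinates.

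Injectivity of $\phi_{\gamma,\beta}$ is then proved exactly as in the $\fM$-case: given $f$ with $\phi_{\gamma,\beta}(f) = 0$, evaluate on $m/\mathbf{x}^{\gamma\cdot s}$ for large $s$, chase through $\pi\circ\theta^{-1}\circ(\text{loc})$, and conclude from a coordinate-degree comparison (choosing $s$ larger than all relevant exponents) that $f(m) = 0$; since dimensions agree, $\phi_{\gamma,\beta}$ is an isomorphism. Finally, one verifies that the squares
$$
\xymatrix{
[M]_{\beta\setminus\gamma} \ar[d]^{\phi_{\gamma,\beta}} & [M]_{\beta\setminus(\gamma+\varepsilon_i)} \ar[l]_{\ \pm u} \ar[d]^{\phi_{\gamma+\varepsilon_i,\beta}} \\
[M_{\mathbf{x}^\gamma}]_\beta & [M_{\mathbf{x}^{\gamma+\varepsilon_i}}]_\beta \ar[l]^{\ \pm\overline{\vartheta}}
}
$$
commute, where $\vartheta$ is the natural localization map; this is the same two-composite computation as in the $\fM$-case (localize $f$, apply the $\pi$'s and $\theta^{-1}$'s in the two orders, and observe both give $\overline{\sum a_\beta \mathbf{x}^{\beta+(\text{shift})}/\mathbf{x}^{\mathbf{1}\cdot(t+s)}}$), with the signs absorbed into $\mathrm{sign}(i,\gamma+\varepsilon_i)$. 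Assembling the $\phi_{\gamma,\beta}$ over all $\gamma \le \alpha$ with $|\gamma| = p$ gives an isomorphism of complexes $\mathcal{M}_{\alpha,\beta}^\bullet \cong [\check{C}_{\fp_\alpha}(M)]_\beta^\bullet$, and taking $p$-th homology yields $[H^p_{\fp_\alpha}(M)]_\beta \cong \mathrm{H}_p(\mathcal{M}_{\alpha,\beta}^\bullet)$. The Corollary is the case $\beta = \alpha$, using the identity $\mu_p(\fp_\alpha, M) = \dim_k [H^p_{\fp_\alpha}(M)]_\alpha$ recorded at the start of Section~3. The main obstacle is purely bookkeeping: keeping the index conventions straight — which coordinates get inverted, which get stripped, and how $E_{\beta\setminus\gamma}$ versus $E_\beta$ line up — so that the maps $\theta$, $\pi$, $\vartheta$ compose correctly; the underlying homological content is identical to the $\fM$-case already proven.
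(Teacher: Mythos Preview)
Your proposal is correct and follows precisely the approach the paper takes: the paper states only that ``a description of the pieces of $H_{\fp_\alpha}^p(M)$ can be obtained using the same arguments as in the previous subsection so we will skip the details,'' and you have supplied those details by paralleling the $\fM$-case proof (constructing $\phi_{\gamma,\beta}$, verifying the Claim on $\theta$, checking injectivity and the commutative squares), then specializing to $\beta=\alpha$ via $\mu_p(\fp_\alpha,M)=\dim_k[H^p_{\fp_\alpha}(M)]_\alpha$. The only caveat is your notation around $E_{\beta\setminus\gamma'}$, which you yourself flag as bookkeeping; the correct identification is simply $[M]_{\beta\setminus\gamma}=\Hom_{D_{R|k}}(M,E_{\beta\setminus\gamma})$ and $[M_{\mathbf{x}^\gamma}]_\beta=\Hom_{D_{R|k}}(M_{\mathbf{x}^\gamma},E_\beta)$, with no auxiliary $\gamma'$ needed.
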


\section{Lyubeznik numbers of monomial ideals}

Let $(R,\fM, k)$ be a regular local ring of dimension $n$ containing
a field $k$ and $A$ a local ring which admits a surjective ring
homomorphism $\pi : R \lra A$. G. Lyubeznik \cite{Ly93} defines a
new set of numerical invariants of $A$ by means of the Bass numbers
$\la_{p,i}(A):=\mu_p(\fM,H_I^{n-i}(R))$, where $I= \kr \pi$.  This
invariant depends only on $A$, $i$ and $p$, but neither on $R$ nor
on $\pi$. Completion does not change $\la_{p,i}(A)$ so one can
assume $R=k[[x_1, \dots ,x_n]]$. These invariants satisfy
$\la_{d,d}(A)\neq 0$ and $\la_{p,i}(A)=0$ for $i>d$, $p>i$, where
$d=\dm A$. Therefore we can collect them in what we refer as {\it
Lyubeznik table}:
$$\Lambda(R/I)  = \left(
                    \begin{array}{ccc}
                      \la_{0,0} & \cdots & \la_{0,d}  \\
                       & \ddots & \vdots \\
                       &  & \la_{d,d} \\
                    \end{array}
                  \right)
$$

\vskip 2mm

It is worth to point out that for the case of monomial ideals one
may always assume that $R=k[x_1, \dots ,x_n]$. Then, let
$\cM=\{[H^r_I(R)]_{\alpha}\}_{\alpha\in\{0,1\}^n} $ be the
$n$-hypercube of a local cohomology module $H^r_I(R)$ supported on a
monomial ideal $I\subseteq R$. In this case we have a topological
description of the pieces and linear maps of the $n$-hypercube, e.g.
using M.~Musta\c{t}\u{a}'s approach \cite{Mu00}, the complex of
$k$-vector spaces associated to $\cM$ is:
$${\cM}^\bullet: 0\longleftarrow \widetilde{H}^{r-2}(\Delta^{\vee}_{{\bf 0}};
k)\stackrel{u_{0}}\longleftarrow
\cdots\stackrel{u_{p-1}}\longleftarrow \bigoplus_{|\alpha|=p}
\widetilde{H}^{r-2}(\Delta^{\vee}_{\alpha};
k)\stackrel{u_{p}} \longleftarrow \cdots
\stackrel{u_{n-1}}\longleftarrow \widetilde{H}^{r-2}(\Delta^{\vee}_{{\bf 1}};
k)\longleftarrow 0$$ where the
map between summands $\widetilde{H}^{r-2}(\Delta^{\vee}_{\alpha +
\varepsilon_i}; k)\longrightarrow
\widetilde{H}^{r-2}(\Delta^{\vee}_{\alpha}; k),$ is induced by
the inclusion $\Delta^{\vee}_{\alpha}
\subseteq \Delta^{\vee}_{\alpha + \varepsilon_i}$. In particular, the Lyubeznik numbers
of $R/I$ are $$\la_{p,n-r}(R/I)={\rm dim}_k \hskip 1mm {\rm
H}_{p}({\cM}^\bullet)$$

\vskip 2mm

At this point one may wonder whether there is a simplicial complex,
a regular cell complex, or a CW-complex that supports $\cM^\bullet$
so one may get a Hochster-like formula not only for the pieces of
the local cohomology modules $H^r_I(R)$ but for its Bass numbers as
well. Unfortunately this is not the case in general. To check this
out we will make a detour through the theory of free resolutions of
monomial ideals and we refer to the work of M.~Velasco \cite{Vel08}
to find examples of free resolutions that are not supported by
CW-complexes.

\subsection{Building a dictionary}

The minimal graded free resolution of a monomial ideal $J$ is an
exact sequence of free $\bZ^n$-graded $R$-modules:
$$\mathbb{L}_{\bullet}(J): \hskip 3mm \xymatrix{ 0 \ar[r]& L_{m}
\ar[r]^{d_{m}}& \cdots \ar[r]& L_1 \ar[r]^{d_1}& L_{0} \ar[r]& J
\ar[r]& 0}$$  where the $j$-th term is of the form $$L_j =
\bigoplus_{\alpha \in {\mathbb{Z}^n}}
R(-\alpha)^{\beta_{j,\alpha}(J)},$$ and the matrices of the
morphisms $d_j: L_j\longrightarrow L_{j-1}$ do not contain
invertible elements.  The $\bZ^n$-graded {\it Betti numbers} of $J$
are the invariants $\beta_{j,\alpha}(J)$. Given an integer $r$, the
{\it $r$-linear strand} of $\mathbb{L}_{\bullet}(J)$ is the complex:
$$\mathbb{L}_{\bullet}^{<r>}(J): \hskip 3mm \xymatrix{ 0 \ar[r]&
L_{n-r}^{<r>} \ar[r]^{d_{n-r}^{<r>}}& \cdots \ar[r]& L_1^{<r>}
\ar[r]^{d_1^{<r>}}& L_{0}^{<r>} \ar[r]& 0},$$ where $$L_j^{<r>} =
\bigoplus_{|\alpha|=j+r} R(-\alpha)^{\beta_{j,\alpha}(J)},$$ and the
differentials $d_j^{<r>}: L_j^{<r>}\longrightarrow L_{j-1}^{<r>}$
are the corresponding components of $d_j$. A combinatorial
description of the first linear strand was given in \cite{RW}.

\vskip 2mm

E.~Miller \cite{Mi00, MS05} developed the notion of {\it monomial
matrices} to encode the structure of free, injective and flat
resolutions. These are matrices with scalar entries that keep track
of the degrees of the generators of the summands in the source and
the target. The goal of this Section is to show that the
$n$-hypercube of a local cohomology module $H^r_I(R)$ has the same
information as the $r$-linear strand of the Alexander dual ideal of
$I$. More precisely, we will see that the matrices in the complex of
$k$-vector spaces associated to the $n$-hypercube of $H^r_I(R)$ are
the transpose of the monomial matrices of the $r$-linear
strand\footnote{In the language of \cite{PV11} we would say that the
$n$-hypercube has the same information as the frame of the
$r$-linear strand}.

\vskip 2mm

M.~Musta\c{t}\u{a} \cite{Mu00} already proved the following relation
between the pieces of the local cohomology modules and the Betti
numbers of the Alexander dual ideal $$\beta_{j,\alpha}(I^{\vee})=
\dim _k [H^{|\alpha | -j }_I(R)]_{\alpha}$$ so the pieces of
$H^r_I(R)$ for a fixed $r$ describe the modules and the Betti
numbers of the $r$-linear strand of $I^{\vee}$. To prove the
following proposition one has to put together some results scattered
in the work of K.~Yanagawa \cite{Ya00, Ya01}.

\begin{proposition}
Let $\cM=\{[H^r_I(R)]_{\alpha}\}_{\alpha\in \{0,1\}^n} $ be the
$n$-hypercube of a fixed local cohomology module $H^r_I(R)$
supported on a monomial ideal $I\subseteq R=k[x_1, \dots ,x_n]$.
Then, $\cM^\bullet$ is the complex of $k$-vector spaces whose matrices
are the transpose of the monomial matrices of the $r$-linear strand
$\mathbb{L}_{\bullet}^{<r>}(I^{\vee})$ of
the Alexander dual ideal of $I$.
\end{proposition}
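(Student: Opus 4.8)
The strategy is to identify, term by term and map by map, the complex $\cM^\bullet$ of $k$-vector spaces attached to the $n$-hypercube of $H^r_I(R)$ with the dual of the frame of the $r$-linear strand $\mathbb{L}_\bullet^{<r>}(I^\vee)$. First I would recall from the discussion preceding the statement (Musta\c{t}\u{a}'s formula $\beta_{j,\alpha}(I^\vee)=\dim_k[H^{|\alpha|-j}_I(R)]_\alpha$) that the modules match: the summand of $\cM^\bullet$ in homological position corresponding to $|\alpha|=n-j-?$ carries a $k$-vector space of dimension equal to the Betti number indexing $L_j^{<r>}$. Concretely, the vertex $[H^r_I(R)]_\alpha$ of the hypercube has dimension $\beta_{|\alpha|-r,\alpha}(I^\vee)$, so running $\alpha$ over $\{0,1\}^n$ with $|\alpha|=j+r$ assembles exactly the scalar data underlying $L_j^{<r>}=\bigoplus_{|\alpha|=j+r}R(-\alpha)^{\beta_{j,\alpha}(I^\vee)}$. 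This pins down the correspondence on objects and shows the homological gradings line up (after the reversal of arrows, which is why the monomial matrices get transposed).

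The substantive point is that the \emph{maps} agree. Here the plan is to invoke Yanagawa's theory of squarefree and straight modules: the category of straight modules (shifted by $\mathbf 1$) is equivalent to $D^T_{v=0}$, and under the Alexander-duality-type functor on squarefree modules the multiplication maps $\cdot x_i$ on $H^r_I(R)$ correspond to the scalar entries appearing in the differentials of the minimal free resolution of $I^\vee$. I would spell this out as follows: the canonical maps $u_{\alpha,i}$ of the hypercube are, by the dictionary in Section 2.3, the $k$-duals of the multiplication maps $\cdot x_i:[H^r_I(R)]_{-\alpha}\to[H^r_I(R)]_{-(\alpha-\varepsilon_i)}$ on the $\bZ^n$-graded pieces; by Musta\c{t}\u{a}'s description these are the maps on reduced cohomology $\widetilde H^{r-2}(\Delta^\vee_{\mathbf 1-\alpha-\varepsilon_i};k)\to\widetilde H^{r-2}(\Delta^\vee_{\mathbf 1-\alpha};k)$ induced by inclusion of restricted complexes. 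On the other hand, the differential $d_j^{<r>}$ of the linear strand of $I^\vee$, written in Miller's monomial-matrix formalism, has scalar entries that are precisely the comparison maps between the corresponding local cohomology pieces — this is exactly what "the $r$-linear strand is governed by the squarefree module structure" means in \cite{Ya00,Ya01}. Assembling these identifications, with the sign conventions that turn the hypercube into a complex (the $\mathrm{sign}(i,\alpha+\varepsilon_i)$ twists), gives that the matrix of $u_p$ is the transpose of the monomial matrix of $d_{?}^{<r>}$.

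I expect the main obstacle to be bookkeeping rather than conceptual: matching the homological indices and signs across three different conventions — the \v{C}ech/hypercube complex $\cM^\bullet$ of Proposition 3.? (indexed by $n-p$), the linear strand $\mathbb{L}_\bullet^{<r>}$ (indexed by $j$ with $|\alpha|=j+r$), and Miller's monomial matrices (whose rows/columns are labelled by multidegrees, and whose "transpose" must be taken in the right sense). The clean way to control this is to fix the degree-labelling on both sides by $\alpha\in\{0,1\}^n$ and check that the reversal $\alpha\mapsto\mathbf 1-\alpha$ together with reversing the arrows (taking $k$-duals) carries one labelled complex to the other; once the objects and the labelled maps are shown to correspond, transposition of the monomial matrices is automatic from the definition of the dual complex of $k$-vector spaces. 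I would therefore organize the proof as: (i) match objects via $\beta_{j,\alpha}(I^\vee)=\dim_k[H^r_I(R)]_\alpha$; (ii) match the individual scalar maps via the squarefree-module dictionary of \cite{Ya00,Ya01} plus Musta\c{t}\u{a}'s topological description; (iii) reconcile signs so that both are honest complexes; (iv) conclude that $\cM^\bullet$ is the transpose (dual) of the monomial-matrix complex of $\mathbb{L}_\bullet^{<r>}(I^\vee)$.
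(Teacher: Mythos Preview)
Your proposal is correct and follows essentially the same route as the paper: both match objects via Musta\c{t}\u{a}'s formula $\beta_{j,\alpha}(I^\vee)=\dim_k[H^r_I(R)]_\alpha$ and match maps by appealing to Yanagawa's squarefree/straight module theory from \cite{Ya00,Ya01}. The only difference is one of precision in step (ii): where you invoke ``the squarefree-module dictionary'' somewhat abstractly and route through Musta\c{t}\u{a}'s topological description of the multiplication maps, the paper pinpoints the specific construction --- Yanagawa's chain complex $\mathbb{F}_\bullet(M)$ of free $R$-modules attached to a squarefree (or straight) module, together with his theorem that $\mathbb{F}_\bullet({\rm Ext}^r_R(R/I,R(-\mathbf{1})))$ is isomorphic (after a shift) to $\mathbb{L}_\bullet^{<r>}(I^\vee)$ --- and then simply observes that, under the equivalence identifying ${\rm Ext}^r_R(R/I,R(-\mathbf{1}))$ with $H^r_I(R)(-\mathbf{1})$, the scalar entries in the monomial matrices of $\mathbb{F}_\bullet$ are by inspection the transposes of those in the hypercube complex. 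This bypasses the topological detour entirely and makes your step (ii) into a single citation plus a direct comparison of definitions; your anticipated bookkeeping obstacle (iii) is then absorbed into the ``after an appropriate shifting'' clause.
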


In \cite{Ya00} K.~Yanagawa develops the notion of {\it squarefree
module}, this is a $\bN^n$-graded module $M$ described by the graded
pieces $M_\alpha$, $\alpha\in \{0,1\}^n$ and the morphisms given by
the multiplication by $x_i$. To such a module $M$ he constructs a
chain complex $\mathbb{F}_{\bullet}(M)$ of free $R$-modules as
follows:

$$\mathbb{F}_{\bullet}(M): 0\longrightarrow [M]_{\bf 1}\otimes_k R \stackrel{d_{0}}\longrightarrow
%\bigoplus_{|\alpha|=n-1} [M]_{\alpha}\otimes_k R
%\stackrel{d_{1}}\longrightarrow
\cdots\stackrel{d_{p-1}}\longrightarrow \bigoplus_{|\alpha|=n-p}
[M]_{\alpha}\otimes_k R \stackrel{d_{p}} \longrightarrow \cdots
\stackrel{d_{n-1}}\longrightarrow [M]_{\bf 0}\otimes_k R
\longrightarrow 0$$ where the map between summands $[M]_{\alpha
+\varepsilon_i}\otimes_k R \lra [M]_{\alpha}\otimes_k R$  sends
$y\otimes 1 \in [M]_{\alpha +\varepsilon_i}\otimes_k R$ to
sign$(i,\alpha+\varepsilon_i) \hskip 1mm  (x_i y\otimes x_i)$. For
the particular case of $M={\rm Ext}_R^{r}(R/I,R(-{\bf 1}))$ he
proved an isomorphism (after an appropiate shifting) between
$\mathbb{F}_{\bullet}(M)$ and the $r$-linear strand
$\mathbb{L}_{\bullet}^{<r>}(I^{\vee})$ of the Alexander dual ideal
$I^{\vee}$ of $I$.

\vskip 2mm

In \cite{Ya01} he proves that the categories of squarefree modules
and straight modules are equivalent. Therefore one may also
construct the chain complex $\mathbb{F}_{\bullet}(M)$ for any
straight module $M$. The squarefree module ${\rm
Ext}_R^{r}(R/I,R(-{\bf 1}))$ corresponds\footnote{In the terminology
of E.~Miller \cite{Mi00} one states that the ${\rm \check{C}}$ech
hull of ${\rm Ext}_R^{r}(R/I,R(-{\bf 1}))$ is $H^r_I(R)(-{\bf 1})$}
to the local cohomology modules $H^r_I(R)(-{\bf 1})$ so there is an
isomorphism between $\mathbb{F}_{\bullet}(H^r_I(R)(-{\bf 1}))$ and
the $r$-linear strand $\mathbb{L}_{\bullet}^{<r>}(I^{\vee})$ after
an appropriate shifting. Taking a close look to the construction of
$\mathbb{F}_{\bullet}(M)$ one may check that the scalar entries in
the corresponding monomial matrices are obtained by transposing the
scalar entries in the one associated to the hypercube of $H^r_I(R)$
with the appropriate shift. More precisely, if
$$\mathbb{L}_{\bullet}^{<r>}(I^{\vee}): \hskip 3mm \xymatrix{ 0
\ar[r]& L_{n-r}^{<r>} \ar[r]& \cdots \ar[r]& L_1^{<r>} \ar[r]&
L_{0}^{<r>} \ar[r]& 0},$$ is the $r$-linear strand of the Alexander
dual ideal $I^\vee$ then we transpose its monomial matrices to
obtain a complex of $k$-vector spaces indexed as follows:
$$\mathbb{F}_{\bullet}^{<r>}(I^{\vee})^{\ast}: \hskip 3mm \xymatrix{ 0 &
K_{0}^{<r>} \ar[l]& \cdots \ar[l]& K_{n-r-1}^{<r>} \ar[l]&
K_{n-r}^{<r>} \ar[l]& 0 \ar[l]}$$

\vskip 2mm

\begin{corollary}
Let $\mathbb{F}_{\bullet}^{<r>}(I^{\vee})^{\ast}$ be the complex of $k$-vector spaces
obtained from the $r$-linear strand of the minimal free resolution of the
Alexander dual ideal $I^{\vee}$ transposing its monomial matrices. Then $$\lambda_{p,n-r}(R/I)= {\rm
dim}_k H_{p}(\mathbb{F}_{\bullet}^{<r>}(I^{\vee})^{\ast})$$
\end{corollary}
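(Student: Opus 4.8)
The plan is to combine the two identifications that have already been
established in the excerpt, so that the corollary becomes a purely
formal consequence. First I would recall that by the discussion at the
start of Section~$4$ (the Mustaţă-style description of the complex
$\cM^\bullet$) we have, for the $n$-hypercube
$\cM=\{[H^r_I(R)]_{\alpha}\}_{\alpha\in\{0,1\}^n}$,
the equality $\lambda_{p,n-r}(R/I)=\dim_k {\rm H}_p(\cM^\bullet)$;
this in turn rests on Proposition~3.5 (the isomorphism
$\cM^\bullet\cong[\check C_{\fM}(M)]_{\bf 1}^\bullet$ applied to
$M=H^r_I(R)$) together with the fact that
$\lambda_{p,i}(R/I)=\mu_p(\fM,H^{n-i}_I(R))=\dim_k[H^p_{\fM}(H^{n-i}_I(R))]_{\bf 1}$.
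So the corollary will follow once I show that
$\mathbb{F}_{\bullet}^{<r>}(I^{\vee})^{\ast}$ is, term by term and
differential by differential, the same complex of $k$-vector spaces as
$\cM^\bullet$ — equivalently, that
${\rm H}_p(\mathbb{F}_{\bullet}^{<r>}(I^{\vee})^{\ast})\cong
{\rm H}_p(\cM^\bullet)$.

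The key step is therefore Proposition~4.2, which asserts precisely that
$\cM^\bullet$ is the complex whose matrices are the transpose of the
monomial matrices of the $r$-linear strand
$\mathbb{L}_{\bullet}^{<r>}(I^{\vee})$. Granting that proposition, I
would argue as follows. On the one hand, by definition
$\mathbb{F}_{\bullet}^{<r>}(I^{\vee})^{\ast}$ is obtained from
$\mathbb{L}_{\bullet}^{<r>}(I^{\vee})$ by transposing its monomial
matrices and re-indexing (the term $L_j^{<r>}$, living in homological
degree $j$, becomes $K_j^{<r>}$, sitting in the $(n-r-j)$-th spot of a
cochain-type complex that I instead read as a chain complex in degree
$j$ with the arrows reversed). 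On the other hand, Proposition~4.2 says
the matrices of $\cM^\bullet$ are exactly these transposed monomial
matrices. Matching the indexing conventions — in $\cM^\bullet$ the
summand $\bigoplus_{|\alpha|=n-p}[H^r_I(R)]_{\alpha}$ sits in
homological degree $p$, and under Mustaţă's description
$\dim_k[H^r_I(R)]_{\alpha}=\beta_{|\alpha|-r,\alpha}(I^{\vee})$, so this
summand has dimension $\sum_{|\alpha|=n-p}\beta_{n-p-r,\alpha}(I^{\vee})
=\operatorname{rank}L_{n-p-r}^{<r>}$ — one checks that
$K_j^{<r>}$ and the degree-$j$ term of $\cM^\bullet$ agree (after the
shift by ${\bf 1}$ discussed in the paragraph on Yanagawa's squarefree
module ${\rm Ext}_R^r(R/I,R(-{\bf 1}))$), and that the differentials
coincide because both are the same transposed scalar matrices. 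Hence
$\mathbb{F}_{\bullet}^{<r>}(I^{\vee})^{\ast}=\cM^\bullet$ as complexes
of $k$-vector spaces, and in particular their homologies agree in each
degree $p$, giving
$\lambda_{p,n-r}(R/I)=\dim_k{\rm H}_p(\cM^\bullet)
=\dim_k{\rm H}_p(\mathbb{F}_{\bullet}^{<r>}(I^{\vee})^{\ast})$.

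The main obstacle is bookkeeping rather than mathematics: one must pin
down all the sign conventions (the $\operatorname{sign}(i,\alpha+\varepsilon_i)$
factors appearing both in the canonical maps $u_{\alpha,i}$ of the
hypercube complex and in Yanagawa's differential
$y\otimes 1\mapsto \operatorname{sign}(i,\alpha+\varepsilon_i)(x_iy\otimes x_i)$)
and, above all, the grading shift by ${\bf 1}$ relating
$H^r_I(R)$, its Čech hull, and the squarefree module
${\rm Ext}_R^r(R/I,R(-{\bf 1}))$, so that the homological index $p$ on
the Lyubeznik side matches the linear-strand index $n-r-p$ (or $p$,
depending on how one reads $\mathbb{F}_{\bullet}^{<r>}(I^{\vee})^{\ast}$)
on the resolution side. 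Once Proposition~4.2 is in hand this is just a
careful comparison of indices and signs; there is no further homological
input needed, since taking homology of a fixed complex of $k$-vector
spaces and taking the $k$-dimension of that homology are both
unambiguous.
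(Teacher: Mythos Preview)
Your proposal is correct and follows exactly the route the paper intends: the corollary is an immediate formal consequence of combining the identification $\lambda_{p,n-r}(R/I)=\dim_k H_p(\cM^\bullet)$ from the opening of Section~4 (which in turn rests on Proposition~3.2) with Proposition~4.1, which identifies $\cM^\bullet$ with $\mathbb{F}_{\bullet}^{<r>}(I^{\vee})^{\ast}$. Your numbering is slightly off (you cite Proposition~3.5 and Proposition~4.2 where the paper has Proposition~3.2 and Proposition~4.1), but the substance is identical, and the paper itself offers no further argument beyond stating the corollary.
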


\vskip 2mm

It follows that one may think Lyubeznik numbers
of a squarefree monomial $I$ as a measure of the
acyclicity of the $r$-linear strand of the Alexander dual
$I^{\vee}$.

\vskip 2mm 

\begin{remark}
As a summary of the dictionary between local cohomology modules and free resolutions we have:

\begin{itemize}

\item  The graded pieces  $[H^{r }_I(R)]_{\alpha}$ correspond to the Betti numbers $\beta_{|\alpha | -r,\alpha}(I^{\vee})$

\item  The $n$-hypercube of $H_I^r(R)$ corresponds to the $r$-linear strand $\mathbb{L}_{\bullet}^{<r>}(I^{\vee}) $

\end{itemize}

\vskip 2mm 

Given a free resolution $\mathbb{L}_{\bullet}$ of a finitely generated graded $R$-module $M$, 
D.~Eisenbud, G.~Fl{\o}ystad and F.O.~Schreyer \cite{EFS} defined its {\it linear part} 
as the complex  ${\rm lin}(\mathbb{L}_{\bullet})$ obtained by 
erasing the terms of degree $\geq 2$ from the matrices of the differential maps. To measure the acyclicity
of the linear part, J.~Herzog and S.~Iyengar \cite{HI} introduced the {\it linearity defect} of $M$ as 
${\rm ld}_R(M):=\sup \{p \hskip 2mm | \hskip 2mm H_p({\rm lin}(\mathbb{L}_{\bullet}))\}$.
Therefore we also have:

\vskip 2mm

\begin{itemize}

\item The $n$-hypercubes of $H_I^r(R)$, $\forall r$ correspond to the linear part  ${\rm lin}(\mathbb{L}_{\bullet}(I^{\vee})) $

\item The Lyubeznik table of $R/I$ can be viewed as a generalization of ${\rm ld}_R(I^{\vee})$

\end{itemize}

\end{remark}

\subsection{Examples}

It is well-known that Cohen-Macaulay squarefree monomial
ideals have a trivial Lyubeznik table
$$\Lambda(R/I)  = \left(
                    \begin{array}{ccc}
                      0 & \cdots & 0 \\
                       & \ddots & \vdots \\
                       &  & 1 \\
                    \end{array}
                  \right)
$$ because they only have one non-vanishing local cohomology module.
Recall that its Alexander dual has a linear resolution (see
\cite{ER98}) so its acyclic. In general, there are
non-Cohen-Macaulay ideals with trivial Lyubeznik table. Some of them
are far from having only one local cohomology module different from
zero.

\vskip 2mm

\begin{example}
Consider the ideal in $k[x_1,\dots,x_9]$:

\vskip 2mm

$I=(x_1,x_2)\cap(x_3,x_4)\cap (x_5,x_6) \cap (x_7,x_8)\cap (x_9,x_1)\cap (x_9,x_2)\cap (x_9,x_3)\cap (x_9,x_4)\cap (x_9,x_5)\cap $

$\hskip .8cm  \cap (x_9,x_6)\cap (x_9,x_7)\cap (x_9,x_8)$

\vskip 2mm

\noindent The non-vanishing local cohomology modules are $H^r_I(R)$
, $r=2,3,4,5$ but the Lyubeznik table is trivial.

\end{example}

\vskip 2mm One may characterize ideals with trivial Lyubeznik table
using a weaker condition than being Cohen-Macaulay,  the class of
{\it sequentially Cohen-Macaulay} ideals given by R.~Stanley
\cite{Sta96}. J.~Herzog and T.~Hibi \cite{HH} introduced the class
of {\it componentwise linear} ideals and proved that their Alexander
dual are sequentially Cohen-Macaulay. The following result is a
direct consequence of \cite[Prop. 4.9]{Ya00}, \cite[Thm. 3.2.8]{Rom01} where componentwise
linear ideals are characterized as those having acyclic linear
strands.

\begin{proposition}
Let $I\subseteq R=k[x_1,\dots,x_n]$ be a squarefree monomial ideal.
Then, the following conditions are equivalent:
\begin{itemize}
\item[i)] $R/I$ is sequentially Cohen-Macaulay.
\item[ii)] $R/I$ has a trivial Lyubeznik table.
\end{itemize}

\end{proposition}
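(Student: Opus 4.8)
The plan is to reduce the proposition to two ingredients already in the literature, both cited in the excerpt: a characterization of sequentially Cohen--Macaulay-ness via the Alexander dual, and the characterization of componentwise linear ideals as those with acyclic linear strands. First I would recall that, by the theorem of J.~Herzog and T.~Hibi \cite{HH}, a squarefree monomial ideal $J$ is componentwise linear if and only if the Stanley--Reisner ring of its Alexander dual is sequentially Cohen--Macaulay; applying this with $J = I^{\vee}$ and using $(I^{\vee})^{\vee} = I$ gives that $R/I$ is sequentially Cohen--Macaulay if and only if $I^{\vee}$ is componentwise linear. So condition i) is equivalent to: $I^{\vee}$ is componentwise linear.

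Next I would invoke the characterization of componentwise linear ideals from \cite[Prop. 4.9]{Ya00} together with \cite[Thm. 3.2.8]{Rom01} (or, equivalently, the Herzog--Iyengar description via linearity defect mentioned in the Remark above): $I^{\vee}$ is componentwise linear if and only if every linear strand $\mathbb{L}_{\bullet}^{<r>}(I^{\vee})$ is acyclic, i.e. $H_p(\mathbb{L}_{\bullet}^{<r>}(I^{\vee})) = 0$ for all $p \geq 1$ and all $r$. Now transposition of monomial matrices does not change homology dimensions in the relevant range — this is exactly the content of Proposition~\ref{} (the $n$-hypercube proposition) and its Corollary, which identify $\mathbb{F}_{\bullet}^{<r>}(I^{\vee})^{\ast}$ with $\cM^{\bullet}$ for $M = H^{n-r}_I(R)$ and give $\lambda_{p,n-r}(R/I) = \dim_k H_p(\mathbb{F}_{\bullet}^{<r>}(I^{\vee})^{\ast})$. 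Since for a complex of finite-dimensional $k$-vector spaces the homology of the complex and of its $k$-dual have the same dimensions, acyclicity of $\mathbb{L}_{\bullet}^{<r>}(I^{\vee})$ in positive homological degree is equivalent to $\lambda_{p,n-r}(R/I) = 0$ for all $p \geq 1$; and the degree-zero term contributes exactly the single $1$ in position $(d,d)$, because $H^{n-d}_I(R) \neq 0$ is the top nonvanishing local cohomology module and the strand $\mathbb{L}_{\bullet}^{<d>}$ degenerates appropriately. Putting these together shows i) $\Leftrightarrow$ ii).

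Concretely, the steps in order are: (1) translate i) into ``$I^{\vee}$ is componentwise linear'' via Herzog--Hibi and Alexander duality; (2) translate ``$I^{\vee}$ componentwise linear'' into ``all linear strands of $I^{\vee}$ are acyclic (in positive degree)'' via \cite{Ya00, Rom01}; (3) use the dictionary of the previous subsection — specifically the Corollary expressing $\lambda_{p,n-r}(R/I)$ as $\dim_k H_p(\mathbb{F}_{\bullet}^{<r>}(I^{\vee})^{\ast})$ together with the fact that dualizing a complex of finite-dimensional vector spaces preserves the dimensions of the homology — to conclude that all linear strands of $I^{\vee}$ being acyclic in positive degree is equivalent to $\lambda_{p,i}(R/I) = 0$ for all $p > 0$, i.e. to the Lyubeznik table being trivial; (4) check separately that the $\lambda_{d,d} = 1$ entry is automatic and accounts for the surviving homology in degree $0$ of the top strand, so that ``trivial Lyubeznik table'' is precisely the statement ``$\lambda_{p,i} = 0$ for $p > 0$''.

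The routine parts are steps (1) and (3); the conceptual heart of the argument is step (2), matching the algebraic notion of componentwise linearity with homological vanishing of each $r$-linear strand, and that is exactly where one leans on \cite[Prop. 4.9]{Ya00} and \cite[Thm. 3.2.8]{Rom01}. The main subtlety I would expect is bookkeeping: being careful that the shift relating $\mathbb{F}_{\bullet}(H^r_I(R)(-{\bf 1}))$ to $\mathbb{L}_{\bullet}^{<r>}(I^{\vee})$, and the index reversal in passing to $\mathbb{F}_{\bullet}^{<r>}(I^{\vee})^{\ast}$, line up so that ``acyclic except at the left end'' on the resolution side corresponds exactly to ``homology concentrated in degree $0$, and only the single copy of $k$ surviving'' on the Lyubeznik side. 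Once the indexing is pinned down the equivalence is immediate, so I do not anticipate a genuine obstacle beyond this careful matching of conventions.
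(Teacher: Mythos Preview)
Your proposal is correct and follows essentially the same route as the paper: the paper states the proposition as ``a direct consequence of \cite[Prop.~4.9]{Ya00}, \cite[Thm.~3.2.8]{Rom01}'' together with the Herzog--Hibi duality between sequentially Cohen--Macaulay and componentwise linear, and you have unpacked precisely those ingredients via the dictionary of Corollary~4.2. Your flagged ``bookkeeping'' concern about the index reversal between $\mathbb{L}_{\bullet}^{<r>}(I^{\vee})$ and $\mathbb{F}_{\bullet}^{<r>}(I^{\vee})^{\ast}$ is real but harmless, and the paper does not spell it out either.
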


\vskip 2mm

The simplest examples of ideals with non-trivial Lyubeznik table are
minimal non-Cohen-Macaulay squarefree monomial ideals (see
\cite{Ly})

\begin{example}
The unique minimal non-Cohen-Macaulay squarefree monomial ideal of
pure height two in $R=k[x_1,\dots,x_n]$ is:
$$\fa_n= (x_1, x_3)\cap\cdots\cap(x_1, x_{n- 1})\cap(x_2, x_4)\cap\cdots\cap(x_2, x_n)\cap(x_3, x_5)\cap\cdots\cap(x_{n- 2}, x_n).$$

\vskip 2mm

$\bullet $ $\fa_4= (x_1, x_3)\cap(x_2, x_4)$.

\vskip 2mm \noindent  We have $H_{\fa_4}^{2}(R) \cong H_{(x_1,
x_3)}^{2}(R) \oplus H_{(x_2, x_4)}^{2}(R)$ and $H_{\fa_4}^{3}(R) \cong
E_{\bf 1}.$ Thus its Lyubeznik table is
$$\Lambda(R/{\fa_4})  = \begin{pmatrix}
   0 & 1 & 0 \\
     & 0 & 0 \\
     &   & 2
\end{pmatrix}$$

$\bullet $ $\fa_5= (x_1, x_3)\cap (x_1, x_4)\cap (x_2, x_4)\cap
(x_2, x_5)\cap (x_3, x_5)$.

\vskip 2mm \noindent We have $H_{\fa_5}^{3}(R) \cong E_{\bf 1}$ and the
hypercube associated to $H_{\fa_5}^{2}(R)$ satisfy $[H_{\fa_5}^{2}(R)]_\alpha
\cong k$ for

\vskip 2mm

$\cdot$ $\alpha=
(1,0,1,0,0),(1,0,0,1,0),(0,1,0,1,0),(0,1,0,0,1),(0,0,1,0,1)$

$\cdot$ $\alpha=
(1,1,0,1,0),(1,0,1,1,0),(1,0,1,0,1),(0,1,1,0,1),(0,1,0,1,1)$

\vskip 2mm \noindent The complex associated to the hypercube is
$${\xymatrix { 0 & 0 \ar[l] & 0 \ar[l] & k^5 \ar[l] & k^5
\ar[l]_{u_2} & 0 \ar[l] &0 \ar[l] & 0 \ar[l]}}$$ where the matrix
corresponding to $u_2$ is the rank $4$ matrix:

{\tiny $$ \begin{pmatrix}
  0 & -1 & -1 & 0  & 0 \\
   1 & -1 & 0 &  0 & 0 \\
  -1  & 0  & 0  & 0 & 1 \\
   0 &  0 & 0 & 1 & 1 \\
   0 & 0 & -1  & -1  & 0
\end{pmatrix}$$} Thus its Lyubeznik table is
$$\Lambda(R/{\fa_5})  = \begin{pmatrix}
  0& 0 & 1 & 0 \\
    &0 & 0 & 0 \\
     &&  0 & 1 \\
     &&& 1
\end{pmatrix}$$ One should notice that $H_{\fa_5}^{2}(R)$ is irreducible since all the extension problems
associated to it are non-trivial.

\begin{remark} In general one gets
{ $$\Lambda(R/{\fa_n})  = \begin{pmatrix}
  0 & 0 & 0 & \cdots & 0 & 1 & 0 \\
    & 0 & 0 & \cdots & 0 & 0 & 0 \\
    &   & 0 & & 0 & 0 & 1 \\
    &   & & \ddots &  & 0 & 0 \\
&   &   & &  & \vdots & \vdots \\
    &   & &  &   & 0 & 0 \\
    &   & &  &   &   & 1
\end{pmatrix}$$} and the result agrees with \cite[Cor. 5.5]{Sch}

\end{remark}

\end{example}

\vskip 2mm

It is well-know that local cohomology modules as well as free
resolutions depend on the characteristic of the base field,  the
most recurrent example being the Stanley-Reisner ideal associated to
a minimal triangulation of $\mathbb{P}_{\bR}^2$. Thus, Lyubeznik
numbers also depend on the characteristic.

\vskip 2mm

\begin{example} Consider the ideal in $R=k[x_1,\dots, x_6]$:

\vskip 2mm

$I=(x_1x_2x_3,x_1x_2x_4,x_1x_3x_5,x_2x_4x_5,x_3x_4x_5,x_2x_3x_6,x_1x_4x_6,x_3x_4x_6,x_1x_5x_6,x_2x_5x_6)$

\vskip 2mm

\noindent The Lyubeznik table in characteristic zero and two are respectively:
$$\Lambda_{\bQ}(R/I)  = \begin{pmatrix}
   0 & 0 & 0 & 0 \\
     & 0 & 0  & 0\\
     &   & 0 & 0 \\
&   &  & 1
\end{pmatrix} \hskip 5mm \Lambda_{\bZ/2\bZ}(R/I)  = \begin{pmatrix}
   0 & 0 & 1 & 0 \\
     & 0 & 0  & 0\\
     &   & 0 & 1 \\
&   &  & 1
\end{pmatrix}$$

 \end{example}

\section{Injective dimension of local cohomology modules}

Let $(R,\fM,k)$ be a local  ring and let $M$ be an $R$-module. The
small support of $M$ introduced by H.~B.~Foxby \cite{Fox} is defined
as
$${\rm supp}_R M:= \{\fp \in {\rm Spec} R \hskip 2mm | \hskip 2mm {\rm depth}_{R_{\fp}} M_{\fp} < \infty\},$$
where ${\rm depth}_R M := {\rm inf} \{i \in \bZ \hskip 2mm | \hskip 2mm {\rm Ext}_R^i(R/\fM, M)\neq 0\}= {\rm inf} \{i \in \bZ \hskip 2mm | \hskip 2mm \mu_i(\fM, M)\neq 0\}$.
In terms of Bass numbers we have that $\fp \in {\rm supp}_R M$ if and only if there exists some integer $i\geq 0$
such that ${\mu}_i(\fp, M) \neq 0$. It is also worth to point out that ${\supp}_R M \subseteq {\Supp}_R M$, and
equality holds when $M$ is finitely generated.

\vskip 2mm

Bass numbers of finitely generated modules are known to satisfy the following properties:

\begin{itemize}
 \item [1)] ${\mu}_i(\fp, M) < +\infty$, $\forall i$, $\forall \fp \in {\Supp}_R M$

\item [2)] Let $\fp \subseteq \fQ \in {\rm Spec} R$ such that $\hlt (\fQ/\fp)=s$. Then $${\mu}_i(\fp, M)\neq 0 \Longrightarrow \mu_{i+s}(\fQ, M)\neq 0.$$

 \item [3)] ${\rm id}_R M := {\rm sup} \{i \in \bZ \hskip 2mm | \hskip 2mm \mu_i(\fM, M)\neq 0\}$

 \item [4)] ${\rm depth}_R M \leq {\dim}_R M \leq {\rm id}_R M$

\end{itemize}

\vskip 2mm

When $M$ is not finitely generated, similar properties for Bass
numbers are known for some special cases. A.~M.~Simon \cite{Si}
proved that properties $2)$ and $3)$ are still true for complete
modules and M.~Hellus \cite{He08} proved that ${\dim}_R M \leq {\rm
id}_R M$ for cofinite modules.

\vskip 2mm

For the case of local cohomology modules, C.~Huneke and R.~Sharp
\cite{HS93} and G.~Lyubeznik \cite{Ly93, Ly97}, proved that for a
regular local ring $(R,\fM,k)$ containing a field $k$:

\begin{itemize}
 \item [1)] ${\mu}_i(\fp, H_I^r(R)) < +\infty$, $\forall i$, $\forall r$, $\forall \fp \in {\Supp}_R H_I^r(R)$

 \item [4')] $ {\rm id}_R H_I^r(R) \leq {\dim}_R H_I^r(R)$

\end{itemize}

\vskip 2mm

In this Section we want to study property $2)$
for the particular case of local cohomology modules supported on monomial ideals
and give a sharper bound to $4')$ in terms of the small support. We start with the
following well-known general result on the minimal primes in the support of local cohomology modules.

\begin{proposition}
Let $(R,\fM)$ be a regular local ring containing a field $k$, $I
\subseteq R$ be any ideal and $\fp \in {\Supp}_R \hskip 1mm
H^r_I(R)$ be a minimal prime. Then we have $\mu_0(\fp, H^r_I(R))\neq
0$, $\mu_i(\fp, H^r_I(R))=0$ $\forall i>0$.

\end{proposition}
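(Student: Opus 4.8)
The plan is to localize at $\fp$ and reduce to showing that a local cohomology module of the form $H^s_{\fp R_\fp}(R_\fp)$ supported at the maximal ideal of a regular local ring is injective, i.e. isomorphic to a direct sum of copies of the injective hull $E_{R_\fp}(k(\fp))$. First I would observe that since $\fp$ is a minimal prime in $\Supp_R H^r_I(R)$, for any prime $\fq \subsetneq \fp$ one has $H^r_{I R_\fq}(R_\fq) = (H^r_I(R))_\fq = 0$; hence, passing to the local ring $(R_\fp, \fp R_\fp)$, the module $N := (H^r_I(R))_\fp = H^r_{IR_\fp}(R_\fp)$ has support contained in $\{\fp R_\fp\}$, so $N$ is $\fp R_\fp$-torsion, i.e. $\Gamma_{\fp R_\fp}(N) = N$. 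By the Huneke--Sharp/Lyubeznik finiteness result quoted in the excerpt (property 1)), all Bass numbers $\mu_i(\fp R_\fp, N)$ are finite, and $\mu_j(\fq, N) = 0$ for $\fq \neq \fp R_\fp$ since $\Supp N \subseteq \{\fp R_\fp\}$.

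The key step is then: a module $N$ over a Noetherian local ring $(S,\fn,\kappa)$ which is $\fn$-torsion and has all Bass numbers $\mu_i(\fn,N)$ finite is automatically injective, being a direct sum of $\mu_0(\fn,N)$ copies of $E_S(\kappa)$. Here I would invoke the standard structure theory of injective modules together with the observation that an $\fn$-torsion module is an essential extension of its socle; more directly, one can argue via the minimal injective resolution $0 \to N \to E^0 \to E^1 \to \cdots$ where $E^i = \bigoplus_{\fq} E_S(S/\fq)^{\mu_i(\fq,N)}$. Since $\Supp N \subseteq \{\fn\}$, each $E^i = E_S(\kappa)^{\mu_i(\fn,N)}$ is a finite direct sum of copies of $E_S(\kappa)$; applying $\Hom_S(\kappa,-)$ and using that $\Hom_S(\kappa, E_S(\kappa)) = \kappa$, one sees $\mu_i(\fn,N) = \dim_\kappa \Ext^i_S(\kappa,N)$, and a short argument (e.g.\ Matlis duality over the completion, or the fact that $\fn$-torsion injective-dimension-finite modules over a regular local ring are injective as in \cite{HS93,Ly93}) forces $\mu_i(\fn,N) = 0$ for $i > 0$. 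Concretely, since $R_\fp$ is regular, $\id_S N \leq \dim_S \Supp N = 0$ already follows from property 4') applied after checking $N$ is still a local cohomology module over a regular local ring — but property 4') only gives $\id \leq \dim$, so the cleaner route is the direct injective-resolution computation above, which shows $\mu_i(\fp,H^r_I(R)) = 0$ for $i>0$ and $\mu_0(\fp,H^r_I(R)) \neq 0$ precisely because $N \neq 0$.

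I expect the main obstacle to be the passage from ``$N$ is $\fn$-torsion with finite Bass numbers'' to ``$N$ is injective'': one must be careful that finiteness of the $\mu_i$ is genuinely needed (an arbitrary $\fn$-torsion module need not be injective, but one that injects into a finite direct sum of copies of $E_S(\kappa)$ at each stage of a minimal resolution is), and the cleanest justification uses that $N$, being $\fn$-torsion, satisfies $N \hookrightarrow E_S(\kappa)^{(\Lambda)}$ for some index set, which by finiteness of $\mu_0$ can be taken finite, and then the cokernel is again $\fn$-torsion with finite Bass numbers, so by induction on homological degree — which terminates because $\id_S N < \infty$ for local cohomology over a regular ring by 4') — all higher terms vanish. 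Finally $\mu_0(\fp,H^r_I(R)) = \dim_{k(\fp)}\Hom_{R_\fp}(k(\fp),N) \neq 0$ since a nonzero $\fp R_\fp$-torsion module has nonzero socle, completing the proof.
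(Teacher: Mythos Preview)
Your approach is essentially the same as the paper's, which is a one-liner: since $\fp$ is minimal in the support, $\dim H^r_I(R)_\fp = 0$, and then \cite[Thm.~3.4]{Ly93} gives $H^r_I(R)_\fp \cong E(R_\fp/\fp R_\fp)^{\mu_0(\fp,H^r_I(R))}$ directly.

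One correction to your write-up: your stated ``key step'' --- that an $\fn$-torsion module over a Noetherian local ring $(S,\fn,\kappa)$ with all Bass numbers finite is automatically injective --- is false as stated. Take $N=\kappa$ over any regular local ring of positive dimension: it is $\fn$-torsion with all Bass numbers finite, yet certainly not injective. What actually makes the argument work is exactly property 4'), which you invoke but then inexplicably set aside: $\id_{R_\fp} N \leq \dim_{R_\fp} N = 0$ gives that $N$ is injective, and since $\Supp N = \{\fp R_\fp\}$ and the Bass numbers are finite, $N \cong E_{R_\fp}(k(\fp))^{\mu_0}$. Your remark that ``property 4') only gives $\id \leq \dim$'' is precisely the point --- here $\dim = 0$, so this is already the full statement, and the longer inductive injective-resolution argument you sketch is unnecessary.
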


\begin{proof}
 dim$H^r_I(R)_{\fp}=0$ so  $H^r_I(R)_{\fp} \cong E(R_{\fp}/\fp R_{\fp})^{\mu_0(\fp, H^r_I(R))}$ by \cite[Thm 3.4]{Ly93}
\end{proof}

\begin{corollary}
Let $(R,\fM,k)$ be a regular local ring containing a field $k$ and
$I \subseteq R$ be any ideal.
 If $\fp \in {\Supp}_R \hskip 1mm H^r_I(R)$ is minimal then $\fp \in {\supp}_R H^r_I(R)$. Thus, ${\Supp}_R \hskip 1mm H^r_I(R)$ and ${\supp}_R \hskip 1mm H^r_I(R)$ have the same minimal primes.

\end{corollary}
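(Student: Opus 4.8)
The plan is to deduce the Corollary directly from the Proposition that immediately precedes it. The Proposition asserts that if $\fp \in {\Supp}_R H^r_I(R)$ is a minimal prime of the support, then $\mu_0(\fp, H^r_I(R)) \neq 0$. But by the characterization of the small support recalled at the start of this section — namely that $\fp \in {\supp}_R M$ if and only if $\mu_i(\fp, M) \neq 0$ for some $i \geq 0$ — the nonvanishing of $\mu_0(\fp, H^r_I(R))$ immediately gives $\fp \in {\supp}_R H^r_I(R)$. That is the whole content of the first sentence.

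For the second sentence, I would argue as follows. We always have the inclusion ${\supp}_R H^r_I(R) \subseteq {\Supp}_R H^r_I(R)$, which was noted earlier in the section. Hence every minimal prime of ${\supp}_R H^r_I(R)$ contains some prime of ${\Supp}_R H^r_I(R)$, and in fact (since the two sets are nested) the minimal primes of ${\Supp}_R H^r_I(R)$ that lie in ${\supp}_R H^r_I(R)$ are exactly among the minimal primes of ${\supp}_R H^r_I(R)$. Conversely, the first sentence shows that every minimal prime of ${\Supp}_R H^r_I(R)$ already belongs to ${\supp}_R H^r_I(R)$. Combining: the minimal primes of ${\Supp}_R H^r_I(R)$ all lie in ${\supp}_R H^r_I(R)$, and since ${\supp}_R H^r_I(R)$ is a subset of ${\Supp}_R H^r_I(R)$, any prime of ${\supp}_R H^r_I(R)$ sits above one of these, forcing the minimal elements of the two sets to coincide.

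There is essentially no obstacle here; the work has all been done in the Proposition (which in turn rests on Lyubeznik's structure theorem \cite[Thm 3.4]{Ly93} for local cohomology modules of finite Bass-number type, localized to a point where the module has dimension zero). The only mild subtlety to state carefully is the set-theoretic bookkeeping in the second claim: one must not conflate "minimal prime of ${\Supp}$" with "minimal prime of ${\supp}$" until one has both the inclusion ${\supp} \subseteq {\Supp}$ and the Proposition in hand. I would write the proof in two or three lines, citing the Proposition for the first assertion and then observing that the inclusion ${\supp}_R H^r_I(R) \subseteq {\Supp}_R H^r_I(R)$ together with that assertion forces equality of minimal primes.

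\begin{proof}
The first claim is immediate from the previous Proposition together with the description of the small support: $\fp \in {\supp}_R M$ if and only if $\mu_i(\fp, M) \neq 0$ for some $i \geq 0$. For the second claim, recall ${\supp}_R H^r_I(R) \subseteq {\Supp}_R H^r_I(R)$. By the first claim every minimal prime of ${\Supp}_R H^r_I(R)$ lies in ${\supp}_R H^r_I(R)$, hence is also minimal in ${\supp}_R H^r_I(R)$; and conversely any prime of ${\supp}_R H^r_I(R)$ contains a minimal prime of ${\Supp}_R H^r_I(R)$, which already lies in ${\supp}_R H^r_I(R)$. Therefore the two sets have the same minimal primes.
\end{proof}
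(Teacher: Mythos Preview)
Your proof is correct and matches the paper's intended argument: the corollary is stated without proof immediately after the Proposition, and your deduction from $\mu_0(\fp, H^r_I(R)) \neq 0$ together with the inclusion ${\supp}_R \subseteq {\Supp}_R$ is exactly what the authors have in mind. The set-theoretic bookkeeping you spell out for the second sentence is a bit more detail than strictly necessary, but it is accurate.
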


The converse statement in Proposition $5.1$ does not hold true.

\begin{example}
Consider the monomial ideal $I=(x_1,x_2,x_5)\cap (x_3,x_4,x_5)\cap
(x_1,x_2,x_3,x_4)$. The support of the corresponding local
cohomology modules are:

% can be computed using the techniques of \cite{Al00} or \cite{Ya01}:

\vskip 2mm

\hskip 1cm ${\Supp}_R \hskip 1mm H_{I}^3(R)= V(x_1,x_2,x_5) \cup V(x_3,x_4,x_5).$

\hskip 1cm ${\Supp}_R \hskip 1mm H_{I}^4(R) = V(x_1,x_2,x_3,x_4).$

\vskip 2mm

\noindent The Bass numbers of $H_{I}^3(R)$ and $H_{I}^4(R)$ are respectively

$$ \begin{tabular}{|c|c|c|c|c|}\hline
  % after \\: \hline or \cline{col1-col2} \cline{col3-col4} ...
 $\fp_\alpha$ & $\mu_0$ & $\mu_1$ & $\mu_2$ \\
 \hline $(x_1,x_2,x_5)$ & $1$ & - & - \\
  $(x_3,x_4,x_5)$ & $1$ & - & - \\
  $(x_1,x_2,x_i,x_5)$  & - & $1$ & - \\
  $(x_i,x_3,x_4,x_5)$  & - & $1$ & - \\
  $(x_1,x_2,x_3,x_4,x_5)$ & - & - & $2$ \\ \hline
\end{tabular}   \hskip 1cm
\begin{tabular}{|c|c|c|c|}\hline
  % after \\: \hline or \cline{col1-col2} \cline{col3-col4} ...
 $\fp_\alpha$ & $\mu_0$ & $\mu_1$ & $\mu_2$ \\
 \hline $(x_1,x_2,x_3,x_4)$ & $1$ & - & - \\
  $(x_1,x_2,x_3,x_4,x_5)$ & $1$ & - & - \\
   \hline
\end{tabular}
$$

\vskip 2mm \noindent In particular, its Lyubeznik table is
$$\Lambda(R/I) =
\begin{pmatrix}
 0 & 1 & 0 \\
    &  0 & 0 \\
     && 2
\end{pmatrix}$$

\noindent Notice that  $\fM=(x_1,x_2,x_3,x_4,x_5)$ is not a minimal prime in the support of $H_{I}^4(R)$ but $\mu_0(\fM, H^4_I(R))\neq 0$, $\mu_i(\fM, H^4_I(R))=0$ $\forall i>0$. We have to point out that this module is not irreducible\footnote{It is enough to check out the corresponding $n$-hypercubes.} $$H^4_I(R) \cong
E_{(1,1,1,1,0)} \oplus E_{(1,1,1,1,1)}$$

\end{example}

From now on we will stick to the case of local cohomology modules
supported on squarefree monomial ideals. The methods developed in
the previous Sections allow us to describe the Bass numbers in the
minimal $^\ast$injective resolution of a module with variation zero
$M$. That is:
$$\mathbb{I}^{\bullet}(M): \hskip 3mm \xymatrix{ 0 \ar[r]& I^{0}
\ar[r]^{d^{0}}&  I^1 \ar[r]^{d^1}&\cdots \ar[r]^{d^{m-1}}& I^{m}
\ar[r]^{d^m}& \cdots},$$ where the $j$-th term is $$I^j =
\bigoplus_{\alpha \in {\{0,1\}^n}}
E_{\alpha}^{\mu_{j}(\fp_\alpha,M)}= \bigoplus_{\alpha \in
{\{0,1\}^n}} {^{\ast}E(R/\fp_{\alpha})}(\bf 1
)^{\mu_{j}(\fp_\alpha,M)} .$$ In particular we are able to compute
the injective dimension of $M$ in the category of $\bZ^n$-graded
$R$-modules that we  denote ${^\ast}{\rm id}_R M$. We can also
define the $\bZ^n$-graded small support that we denote
${^\ast}\supp_R M$ as the set of face ideals in the support of $M$
that at least have a Bass number different from zero.

\vskip 2mm

If we want to compute the Bass numbers with respect to any prime
ideal, the injective dimension of $M$ as $R$-module and the small
support we have to refer to the result of S.~Goto and K.~I.~Watanabe
\cite[Thm. 1.2.3]{GW78}. Namely, given any prime ideal $\fp \in
\spec R$, let $\fp_\alpha$ be the largest face ideal contained in
$\fp$. If $\hlt(\fp/\fp_\alpha)=s$ then $\mu_p(\fp_\alpha,M)=
\mu_{p+s}(\fp,M)$. Notice that in general we have ${^\ast}{\rm id}_R
M \leq {\rm id}_R M$.

\vskip 2mm

To compare the injective dimension and the dimension of a local
cohomology module $M=H_I^r(R)$  we are going to consider chains of
prime face ideals $\fp_0 \subseteq \fp_1 \subseteq \cdots \subseteq
\fM$ in the support of  $M$ such that $\fp_0$ is minimal. The Bass
numbers with respect to $\fp_0$ are completely determined and, even
though property $2)$ is no longer true, we have some control on the
Bass numbers of $\fp_i$ depending on the structure of the
corresponding $n$-hypercube. For simplicity, assume that $\fp_i$ is
a face ideal $\fp_\alpha \subseteq \fM$ of height $n-1$ and $x_n\in
\fM \setminus \fp_\alpha$ and that the Bass numbers with respect to
$\fp_\alpha$ are known. We are going to compute the Bass numbers
with respect to $\fM$ using the degree {\bf 1} part of the exact
sequence of \v{C}ech complexes
$$ 0 \longrightarrow
{\check{C}}^{\bullet}_{\fp_\alpha}(M_{x_{n}})[-1] \longrightarrow
{\check{C}}^{\bullet}_{\fM}(M) \longrightarrow
{\check{C}}^{\bullet}_{\fp_\alpha}(M)\lra 0$$

Let $\cM^{\bullet}$ be the complex associated to the $n$-hypercube
of $M$ that is isomorphic to $[{\check{C}}^{\bullet}_{\fM}(M)]_{\bf
1}$. For any $\beta \in \{0,1\}^n$, let $\cM^{\bullet}_{\leq \beta}$
(resp. $\cM^{\bullet}_{\geq \beta}$) be the subcomplex of
$\cM^{\bullet}$ with pieces of degree $\leq \beta$ (resp. $\geq
\beta$). Using the techniques of Section $3$ one may see that
$$ 0 \longleftarrow
[{\check{C}}^{\bullet}_{\fp_\alpha}(M_{x_{n}})[-1]]_{\bf 1}
\longleftarrow [{\check{C}}^{\bullet}_{\fM}(M)]_{\bf 1}
\longleftarrow [{\check{C}}^{\bullet}_{\fp_\alpha}(M)]_{\bf
1}\longleftarrow 0$$ is isomorphic to the short exact sequence
$$ 0 \longleftarrow
\cM_{\leq \alpha}^{\bullet} \longleftarrow \cM^{\bullet} \longleftarrow \cM_{\geq {\bf
1}-\alpha}^{\bullet} \longleftarrow 0$$

\begin{example} The short exact sequence $ 0 \longleftarrow
\cM_{\leq (1,1,0)}^{\bullet} \longleftarrow \cM^{\bullet}
\longleftarrow \cM_{\geq (0,0,1)}^{\bullet} \longleftarrow 0$ can be
visualized from the corresponding $3$-hypercube as follows:

\vskip 3mm

{\tiny ${\xymatrix { &M_{(0,0,0)} \ar[dl]_{u_1} \ar[d]_{u_2} \\
M_{(1,0,0)} \ar[d]_{u_2} & M_{(0,1,0)} \ar[dl]^(.5){u_1}      \\
M_{(1,1,0)}  &  }}$  } \hskip 5mm {\tiny ${\xymatrix { &M_{(0,0,0)}
\ar[dl]_{u_1} \ar[d]_{u_2} \ar[dr]^{u_3} &
\\ M_{(1,0,0)} \ar[d]_{u_2} \ar[dr]^(.3){u_3}  & M_{(0,1,0)}\ar[dr]_(.3){u_3}|\hole \ar[dl]^(.3){u_1}|\hole & M_{(0,0,1)}  \ar[dl]_(.3){u_1} \ar[d]^{u_2}
\\ M_{(1,1,0)} \ar[dr]_{u_3}& M_{(1,0,1)} \ar[d]_{u_2} & M_{(0,1,1)} \ar[dl]^{u_1}
\\& M_{(1,1,1)} & }} $ }  \hskip 5mm
{\tiny ${\xymatrix {  & \\   &
M_{(0,0,1)}  \ar[dl]_(.5){u_1} \ar[d]^{u_2}\\
M_{(1,0,1)}\ar[d]_{u_2} & M_{(0,1,1)} \ar[dl]^{u_1} \\
M_{(1,1,1)} & }} $ }

\end{example}

At this point we should notice the following key observations that
we will use throughout this Section:

\vskip 3mm

\begin{itemize}

\item [i)] We have $\cM^{\bullet}_{\leq
\alpha}\cong
[{\check{C}}^{\bullet}_{\fp_\alpha}(M_{x_{n}})[-1]]_{\bf 1} \cong
[{\check{C}}^{\bullet}_{\fp_\alpha}(M)]_{\alpha}$, thus
$\mu_p(\fp_\alpha, M)= {\rm dim}_k H_p(\cM^{\bullet}_{\leq
\alpha})$.

\vskip 2mm

\item [ii)] Consider the long exact sequence $$  \cdots \lra
H^{p-1}_{\fp_\alpha}(M_{x_n}) \longrightarrow H^p_{\fM}(M)
\longrightarrow H^p_{\fp_\alpha}(M)\stackrel{\delta^p}\lra
H^p_{\fp_\alpha}(M_{x_n}) \longrightarrow H^{p+1}_{\fM}(M)
\longrightarrow \cdots$$ associated to the short exact sequence of
\v{C}ech complexes. Its degree {\bf 1} part is
$$
\cdots \longleftarrow H_{p-1}(\cM_{\leq \alpha}^{\bullet})
\longleftarrow H_p(\cM^{\bullet}) \longleftarrow H_p(\cM_{\geq {\bf
1}-\alpha}^{\bullet})\stackrel{\delta^p}\longleftarrow H_p(\cM_{\leq
\alpha}^{\bullet}) \longleftarrow H_{p+1}(\cM^{\bullet})
\longleftarrow \cdots$$

\noindent but it might be useful to view it as
$$ \cdots \longleftarrow
k^{\mu_{p-1}(\fp_\alpha, M)} \longleftarrow [H^p_{\fM}(M)]_{\bf 1}
\longleftarrow [H^p_{\fp_\alpha}(M)]_{\bf 1}
\stackrel{\delta^p}\longleftarrow k^{\mu_{p}(\fp_\alpha, M)}
\longleftarrow [H^{p+1}_{\fM}(M)]_{\bf 1} \longleftarrow \cdots$$

\noindent or even as the complex $$ \cdots \longleftarrow
k^{\mu_{p-1}(\fp_\alpha, M)} \longleftarrow k^{\mu_{p}(\fM, M)}
\longleftarrow [H^p_{\fp_\alpha}(M)]_{\bf 1}
\stackrel{\delta^p}\longleftarrow k^{\mu_{p}(\fp_\alpha, M)}
\longleftarrow k^{\mu_{p+1}(\fM, M)} \longleftarrow \cdots$$

\vskip 2mm \noindent  Notice that the connecting morphisms
$\delta^p$ are the classes, in the corresponding homology groups, of
the canonical morphisms $u_{\alpha,n}$ that describe the
$n$-hypercube of $M$.

\vskip 2mm

\item [iii)] The 'difference' between $\mu_p(\fp_\alpha, M)$ and $\mu_{p+1}(\fM,
M)$, i.e. the 'difference' between $H_p(\cM^{\bullet}_{\leq
\alpha})$ and $H_{p+1}(\cM^{\bullet})$, comes from the homology of
the complex $\cM^{\bullet}_{\geq {\bf 1}-\alpha}$. Roughly speaking,
it comes from the contribution of other chains of prime face ideals
$\fQ_0 \subseteq \fQ_1 \subseteq \cdots \subseteq \fM$ in the
support of a local cohomology module $M=H_I^r(R)$ such that $\fQ_0$
is minimal and not containing $\fp_\alpha$\footnote{We use the fact
that for local cohomology modules $M=H_I^r(R)$ the degree {\bf 0}
part of the $n$-hypercube is always zero, i.e. its minimal primes
have height $>0$}.

\end{itemize}

\vskip 2mm

{\bf Discussion 1:} Consider the case  where $\fp_\alpha \subseteq
\fM$ is a minimal prime of height $n-1$ in the support of $M$. We
have
$$ 0 \longleftarrow [H^0_{\fM}(M)]_{\bf 1} \longleftarrow
[H^0_{\fp_\alpha}(M)]_{\bf 1}\stackrel{\delta^0}\longleftarrow k
\longleftarrow [H^{1}_{\fM}(M)]_{\bf 1} \longleftarrow
[H^1_{\fp_\alpha}(M)]_{\bf 1} \longleftarrow 0$$ and
$[H^{i}_{\fM}(M)]_{\bf 1}\cong [H^i_{\fp_\alpha}(M)]_{\bf 1}$, for
all $i\geq 2$, so $\fp_\alpha$ contributes to $\mu_{p}(\fM, M)$ for
$p=0,1$. In particular, we have:

\vskip 2mm

\begin{itemize}

\item [$\cdot$] $\mu_{0}(\fM, M)=0$ if and only if $[H^0_{\fp_\alpha}(M)]_{\bf
1}=0$ or $[H^0_{\fp_\alpha}(M)]_{\bf 1}=k$ and $\delta^0 \neq 0$.

\item [$\cdot$] $\mu_{1}(\fM, M)=0$ if and only if $[H^1_{\fp_\alpha}(M)]_{\bf
1}=0$ and $\delta^0 \neq 0$.

\end{itemize}

\vskip 2mm

The non-vanishing of the $0$-th Bass number is related to the
decomposability of the local cohomology module. One should compare
the following result with Prop $5.1$ and check out the local
cohomology module $H^4_I(R)$ in Example $5.3$.

\begin{proposition}
Let $\fp_{\alpha} \in {\Supp}_R \hskip 1mm H^r_I(R)$ be a prime
ideal such that $\mu_0(\fp_\alpha, H^r_I(R))\neq 0$. Then,
$E_\alpha^{\mu_0(\fp_\alpha, H^r_I(R))}$ is a direct summand of
$H^r_I(R)_{\fp_\alpha}$.
\end{proposition}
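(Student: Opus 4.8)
The plan is to reduce the statement to a local-duality/structure computation on the $n$-hypercube and its restrictions. Since Bass numbers and the local cohomology modules $H^r_I(R)$ are compatible with localization, and the hypercube of $H^r_I(R)$ localizes to the $|\alpha|$-hypercube $\cM_{\leq \alpha}$ (cf. the remark following Corollary 3.6), I would first replace $R$ by $R_{\fp_\alpha}$, i.e. assume $\fp_\alpha=\fM$ is the maximal ideal and we must show that $E_{\bf 1}^{\mu_0(\fM,H^r_I(R))}$ splits off $H^r_I(R)$. By Corollary 3.4 we have $\mu_0(\fM,H^r_I(R))=\dim_k H_0(\cM^\bullet)=\dim_k \ckr(u_0)$, where $u_0:\bigoplus_{|\alpha|=n-1}[M]_\alpha \to [M]_{\bf 1}$ and $M=H^r_I(R)$.

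Next I would make precise the statement ``$E_{\bf 1}^{t}$ is a direct summand of $M$'' in terms of the hypercube: a copy of the simple-injective object $E_{\bf 1}$ splitting off $M$ corresponds exactly to a piece of $[M]_{\bf 1}$ that is not in the image of any incoming canonical map $u_{\alpha,i}$ (equivalently, a direct summand $k^t \subseteq [M]_{\bf 1}$ complementary to $\im(u_0)$, on which all the structure maps into it vanish). This is the key translation, and it is exactly where the equivalence of categories $D^T_{v=0}\simeq \mathcal C^n_{v=0}$ (together with the filtration by the $F_j$ and the fact, recalled in Section 2, that the extension classes are determined by the $u_{\alpha,i}$) does the work: since $E_{\bf 1}$ sits at the top degree ${\bf 1}$, the extension problems gluing it to the rest of the filtration are governed precisely by the maps $u_{\alpha,i}$ landing in $[M]_{\bf 1}$, and a cokernel class of $u_0$ gives a sub-hypercube isomorphic to $E_{\bf 1}^{t}$ that is a direct summand. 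I would phrase this as a short lemma: for $M\in D^T_{v=0}$, $E_{\bf 1}^{\dim_k \ckr(u_0)}$ is always a direct summand of $M$, and then note $\ckr(u_0)=H_0(\cM^\bullet)$ so its dimension is $\mu_0(\fM,M)$ by Corollary 3.4.

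To prove that lemma I would argue as follows. Choose a $k$-splitting $[M]_{\bf 1}=\im(u_0)\oplus W$ with $\dim_k W = \mu_0(\fM,M)=:t$. Build the sub-hypercube $\cN$ of $\cM$ with $\cN_{\bf 1}=W$ and $\cN_\alpha=0$ for $\alpha\neq{\bf 1}$; since every canonical map into $[M]_{\bf 1}$ factors through $\im(u_0)$, which meets $W$ trivially, $\cN$ is a genuine sub-object, and as a hypercube it is visibly $W\otimes_k$ (hypercube of $E_{\bf 1}$), i.e. $\cN\cong E_{\bf 1}^t$ in $\mathcal C^n_{v=0}$. The quotient $\cM/\cN$ is again in $\mathcal C^n_{v=0}$, and the extension $0\to \cN\to\cM\to\cM/\cN\to 0$ is classified by maps out of the top-degree piece; but $\cN$ is concentrated in degree ${\bf 1}$ with zero outgoing maps, so any extension of $\cM/\cN$ by $\cN$ is split (there is no nonzero $u_{{\bf 1}-\varepsilon_i,i}$ to obstruct a splitting on the nose). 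Transporting back through the equivalence of categories yields $M\cong E_{\bf 1}^t \oplus M'$, which is the claim. Undoing the localization, $E_\alpha^{\mu_0(\fp_\alpha,H^r_I(R))}$ is a direct summand of $H^r_I(R)_{\fp_\alpha}$.

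The main obstacle I anticipate is the last splitting argument: making rigorous that an extension by an object concentrated in the single top degree, with vanishing outgoing canonical maps, is necessarily split. One has to check that the relevant $\mathrm{Ext}^1$ in $\mathcal C^n_{v=0}$ (or equivalently the extension classes described via the filtration $\{F_j\}$ in Section 2) vanishes for such a top-degree object; this should follow directly from the description in \cite{AGZ03,AZ} that the extension classes are carried by the maps $u_{\alpha,i}$, of which there are none emanating from degree ${\bf 1}$, but it deserves a careful statement. Everything else — the identification $\mu_0(\fM,M)=\dim_k\ckr(u_0)$, the reduction to $\fp_\alpha=\fM$, and the explicit form of the hypercube of $E_{\bf 1}$ — is routine given Section 2 and Corollary 3.4.
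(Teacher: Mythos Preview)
Your proposal is correct and follows essentially the same route as the paper. Both reduce to $\fp_\alpha=\fM$, identify $\mu_0(\fM,H^r_I(R))=\dim_k\ckr(u_0)=m_0-\rk u_0$, and then split off $E_{\bf 1}^{\mu_0}$ using the fact (from \cite{AGZ03,AZ}) that the extension classes in the filtration $\{F_j\}$ are determined by the canonical maps $u_{\alpha,i}$; the only cosmetic difference is that the paper reads the splitting off the top short exact sequence $0\to F_{n-1}\to H^r_I(R)\to E_{\bf 1}^{m_0}\to 0$ as a partial splitting governed by $u_0$, whereas you build the complementary sub-hypercube $\cN$ directly and exhibit the retraction $\cM\to\cN$ by projecting onto $W$ in degree ${\bf 1}$.
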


\begin{proof}
We assume that $\fp_\alpha=\fM$ and we denote $\mu_0:=\mu_0(\fM,
H^r_I(R))$. The first terms of the complex $\cM^\bullet$ associated
to the $n$-hypercube of $H^r_I(R)$ have the form $0\longleftarrow
k^{m_0}\stackrel{u_{0}}\longleftarrow k^{m_1}$ with $\mu_0=m_0 -
{\rm rk} \hskip 1mm u_0 > 0$. The linear map
$u_0=\oplus_{|\alpha|=n-1} u_{\alpha,i}$ determine the extension
classes of the short exact sequence $0 \lra F_{n-1}\lra H^r_I(R)\lra
E_{\bf 1}^{m_0} \lra 0$ associated to the filtration $\{
{F_j}\}_{0\le j\le n}$ of $H^r_I(R)$. Thus, we have a decomposition
$H^r_I(R)\cong E_{\bf 1}^{\mu_0} \oplus M$, where $M$ corresponds to
the extension $0 \lra F_{n-1}\lra M \lra E_{\bf 1}^{{\rm rk}u_0}
\lra 0$.
\end{proof}

%\begin{remark}
%If $\fp_{\alpha} \in {\Supp}_R \hskip 1mm H^r_I(R)$ is a minimal
%prime then $H^r_I(R)_{\fp_\alpha}\cong E_\alpha^{\mu_0(\fp_\alpha,
%H^r_I(R))}$.
%\end{remark}

\vskip 2mm

{\bf Discussion 2:} In general, let $s = {\rm max}\{ i\in \bZ_{\geq
0} \hskip 2mm | \hskip 2mm {\mu}_i(\fp_\alpha, M)\neq 0\}$, then we
have  $$  \cdots \longleftarrow [H^s_{\fM}(M)]_{\bf 1}
\longleftarrow [H^s_{\fp_\alpha}(M)]_{\bf
1}\stackrel{\delta^s}\longleftarrow k^{\mu_s(\fp_\alpha,M)}
\longleftarrow [H^{s+1}_{\fM}(M)]_{\bf 1} \longleftarrow
[H^{s+1}_{\fp_\alpha}(M)]_{\bf 1} \longleftarrow 0$$ and
$[H^{i}_{\fM}(M)]_{\bf 1}\cong [H^i_{\fp_\alpha}(M)]_{\bf 1}$, for
all $i\geq s+2$, so $\fp_\alpha$ contributes to $\mu_{p}(\fM, M)$
for $p\leq s+1$. Again, we can describe conditions for the vanishing
of $\mu_{s}(\fM, M)$ and $\mu_{s+1}(\fM, M)$ in terms of the
connecting morphism $\delta^s$. One can find examples where any
situation is possible.

\vskip 2mm

\begin{itemize}

\item [$\cdot$] The local cohomology module $H_I^3(R)$ in Example $5.3$ satisfies
for any $\fp_\alpha \subseteq \fM$ such that
$\hlt(\fM/\fp_\alpha)=1$,  $\mu_{s}(\fp_\alpha, M)\neq 0$, $
\mu_{s+1}(\fM, M)\neq 0$ and $\mu_{s}(\fM, M)= 0$ for $s=1$.

\vskip 2mm

\item [$\cdot$] The local cohomology module $H_I^4(R)$ in Example $5.3$ satisfies
$\mu_{s}(\fp_{(1,1,1,1,0)}, M)= \mu_{s}(\fM, M)=1$ and
$\mu_{s+1}(\fM, M)= 0$ for $s=0$.

\vskip 2mm

\item [$\cdot$] The local cohomology module $H_I^3(R)$ in Example $5.7$ satisfies
$\mu_{s}(\fp_{(1,1,1,0,0)}, M)= 1$ and $\mu_{s}(\fp_{(1,1,1,1,0)},
M)=\mu_{s+1}(\fp_{(1,1,1,1,0)}, M)= 0$ for $s=0$.

\vskip 2mm

\item [$\cdot$] The local cohomology module $H_{\fa_5}^2(R)$ in Example $4.4$ satisfies
$\mu_{s}(\fp_{(1,1,1,0,1)}, M)= \mu_{s}(\fM, M) = \mu_{s+1}(\fM,
M)=1$  for $s=2$.

\end{itemize}

\vskip 2mm

\begin{remark}

One might be tempted to think that the condition
$\mu_{s}(\fp_\alpha, M)\neq 0$ and $\mu_{s}(\fM, M)\neq 0$ is
related to the decomposability of the corresponding module $M$. This
is not the case as it shows Example $3.5$ where we have a short
exact sequence $$0 \longleftarrow [H^2_{\fM}(M)]_{\bf 1}
\longleftarrow [H^2_{\fp_{(1,1,1,0,1)}}(M)]_{\bf 1} \cong k
\stackrel{\delta^2}\longleftarrow k \longleftarrow
[H^3_{\fM}(M)]_{\bf 1} \longleftarrow 0$$ where the connecting
morphism $\delta^2$ is zero even though the local cohomology module
$H_{\fa_5}^2(R)$ is indecomposable, i.e. the canonical morphisms
$u_{\alpha,i}$ are not trivial but their classes in homology make
the connecting morphism trivial.
\end{remark}

\vskip 2mm

{\bf Discussion 3:} In the case that there exists a prime ideal
$\fp_\alpha \not \in {\rm supp}_R M$, then we have
$[H^{i}_{\fM}(M)]_{\bf 1}\cong [H^i_{\fp_\alpha}(M)]_{\bf 1}$, for
all $i$. Therefore, the contribution to the Bass numbers
$\mu_{p}(\fM, M)$ comes from other chains of prime face ideals
$\fQ_0 \subseteq \fQ_1 \subseteq \cdots \subseteq \fM$. This is what
happens in the following example:

\begin{example}
Consider the ideal $I=(x_1,x_4)\cap (x_2,x_5)\cap (x_1,x_2,x_3)$.
The non-vanishing pieces of the hypercube associated to  the
corresponding local cohomology modules are:

\vskip 2mm

 $[H_{I}^2(R)]_\alpha= k$  \hskip 2mm for $\alpha=(1,0,0,1,0),(0,1,0,0,1)$.

 $[H_{I}^3(R)]_\alpha= k$  \hskip 2mm for $\alpha=(1,1,1,0,0),(1,1,1,1,0),(1,1,1,0,1),(1,1,0,1,1),(1,1,1,1,1)$.

\vskip 2mm

\noindent Notice that $H_{I}^2(R) \cong H_{(x_1,x_4)}^2(R) \oplus H_{(x_2,x_5)}^2(R)$ and the complex associated to the hypercube of  $H_{I}^3(R)$ is:
$${\xymatrix { 0 & k \ar[l]  && k^3 \ar[ll]_{{\Tiny \begin{pmatrix}
 1 & 1 & 1  \end{pmatrix}}} && k \ar[ll]_{{\Tiny
\left(\begin{array}{rrr}
-1\\ 1\\ 0\\  \end{array}\right)}}
 & 0
\ar[l] }}$$

%\hskip 1cm ${\Supp}_R \hskip 1mm H_{I}^2(R)= V(x_1,x_4)\cup V(x_2,x_5)$.

%\hskip 1cm ${\Supp}_R \hskip 1mm H_{I}^3(R) = V(x_1,x_2,x_3)\cup V(x_1,x_2,x_4,x_5)$

\noindent Thus, the Bass numbers of $H_{I}^2(R)$ and $H_{I}^3(R)$ are
respectively

 $$
\begin{tabular}{|c|c|c|c|c|}\hline
  % after \\: \hline or \cline{col1-col2} \cline{col3-col4} ...
 $\fp_\alpha$ & $\mu_0$ & $\mu_1$ & $\mu_2$ & $\mu_3$\\
 \hline $(x_1,x_4)$ & $1$ & - & - & -\\
  $(x_2,x_5)$ & $1$ & - & - & -\\
  $(x_1,x_4,x_i)$  & - & $1$ & - & -\\
  $(x_2,x_5,x_i)$  & - & $1$ & - & -\\
  $(x_1,x_4,x_i,x_j)$  & - &-& $1$  & -\\
  $(x_2,x_5,x_i,x_j)$ & - &-& $1$  & -\\
  $(x_1,x_2,x_3,x_4,x_5)$ & - & - & - & $2$ \\ \hline
\end{tabular},  \hskip 1cm \begin{tabular}{|c|c|c|c|}\hline
  % after \\: \hline or \cline{col1-col2} \cline{col3-col4} ...
 $\fp_\alpha$ & $\mu_0$ & $\mu_1$ & $\mu_2$  \\ \hline
 $(x_1,x_2,x_3)$ & $1$ & - & -  \\
$(x_1,x_2,x_3,x_4)$ & - & - & -  \\ $(x_1,x_2,x_3,x_5)$ & - & - & -
\\ $(x_1,x_2,x_4,x_5)$ & $1$ & - & -  \\
 $(x_1,x_2,x_3,x_4,x_5)$ & - & $1$ & -  \\ \hline
\end{tabular}
$$

\vskip 2mm

\noindent Its Lyubeznik table is:
$$\Lambda(R/I)  = \begin{pmatrix}
  0& 0 & 0 & 0 \\
    &0 & 1 & 0 \\
     &&  0 & 0 \\
     &&& 2
\end{pmatrix}$$

\vskip 2mm

\noindent We also have:

\vskip 2mm

\hskip 1cm $\cdot$ ${^\ast}\id_R H_I^{2}(R) = \dm H_I^{2}(R)=3.$
\hskip 1.7cm $\cdot$ $1={^\ast}\id_R H_I^{3}(R) < \dm H_I^{3}(R)=2.$

\hskip 1cm $\cdot$ $ \Min_R (H_I^{2}(R)) = \Ass_R (H_I^{2}(R)).$
\hskip 1.4cm $\cdot$ $ \Min_R (H_I^{3}(R)) = \Ass_R (H_I^{3}(R)).$

\hskip 1cm $\cdot$ $ \supp_R (H_I^{2}(R)) = \Supp_R (H_I^{2}(R)).$
\hskip 1cm $\cdot$ $ \supp_R (H_I^{3}(R))  \subset \Supp_R
(H_I^{3}(R)).$

\vskip 2mm

\noindent In particular $(x_1,x_2,x_3,x_4)$ and $(x_1,x_2,x_3,x_5) $
do not belong to $\supp_R (H_{I}^3(R))$.

\end{example}

\vskip 2mm

It follows from the previous discussions that the length of the
injective resolution of the local cohomology module $H^r_I(R)$ has a
controlled growth when we consider chains of prime face ideals
$\fp_0 \subseteq \fp_1 \subseteq \cdots \subseteq \fM$ starting with
a minimal prime ideal $\fp_0$.

%is at most one plus the length of the injective resolution of the
%localization of $H^r_I(R)$ at a height $n-1$ prime ideal.

\begin{proposition}
Let $I \subseteq R=k[x_1,\dots,x_n]$ be a squarefree monomial ideal
and set $$s := {\rm max}\{ i\in \bZ_{\geq 0} \hskip 2mm | \hskip 2mm
{\mu}_i(\fp_\alpha, H^r_I(R))\neq 0\}$$ for all prime ideals
$\fp_{\alpha} \in {\Supp}_R \hskip 1mm H^r_I(R)$ such that
$|\alpha|=n-1$. Then $\mu_{t}(\fM, H^r_I(R))= 0$ $\forall
 t> s+1$.
\end{proposition}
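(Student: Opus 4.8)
The plan is to induct on the length of a chain of prime face ideals starting at a minimal prime $\fp_0$ in $\Supp_R H^r_I(R)$, using the exact sequence of \v{C}ech complexes set up in the discussions above. The base case is handled by Proposition~5.1: if $\fp_0$ is a minimal prime then $\mu_i(\fp_0,H^r_I(R))=0$ for all $i>0$, so in particular the maximal index $s$ with a non-vanishing Bass number is $s=0$ at such a prime. For the inductive step, suppose $\fp_\alpha \subseteq \fM$ is a face ideal of height $n-1$, say with $x_n\in \fM\setminus\fp_\alpha$, and that we already know $\mu_i(\fp_\alpha,H^r_I(R))=0$ for $i>s$. I would then invoke observation (ii): the degree ${\bf 1}$ part of the long exact sequence attached to
$$0\longrightarrow {\check{C}}^{\bullet}_{\fp_\alpha}(M_{x_n})[-1]\longrightarrow {\check{C}}^{\bullet}_{\fM}(M)\longrightarrow {\check{C}}^{\bullet}_{\fp_\alpha}(M)\longrightarrow 0$$
reads, for each $p$,
$$\cdots\longleftarrow [H^p_{\fM}(M)]_{\bf 1}\longleftarrow [H^p_{\fp_\alpha}(M)]_{\bf 1}\longleftarrow k^{\mu_p(\fp_\alpha,M)}\longleftarrow [H^{p+1}_{\fM}(M)]_{\bf 1}\longleftarrow [H^{p+1}_{\fp_\alpha}(M)]_{\bf 1}\longleftarrow\cdots$$
(here $M=H^r_I(R)$). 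The point is that the terms $k^{\mu_p(\fp_\alpha,M)}$ arise from $[H^p_{\fp_\alpha}(M_{x_n})]_{\bf 1}\cong H_p(\cM^\bullet_{\le\alpha})$, which vanish for $p>s$ by the inductive hypothesis together with observation~(i). Consequently, for $t>s+1$ we get $[H^t_{\fp_\alpha}(M_{x_n})[-1]]_{\bf 1}=k^{\mu_{t-1}(\fp_\alpha,M)}=0$ and the exact sequence forces $[H^t_{\fM}(M)]_{\bf 1}\cong [H^t_{\fp_\alpha}(M)]_{\bf 1}$, i.e. $\mu_t(\fM,M)=\mu_t(\fp_\alpha,M)$; but the right-hand side is $0$ for $t>s\ge s$, and we only need $t>s+1$, so we are done along this single step.

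To deduce the statement as phrased, I would observe that the hypothesis bounds $s$ \emph{uniformly} over all face ideals $\fp_\alpha$ with $|\alpha|=n-1$. Since $\fM$ is the unique face ideal of height $n$ and every face ideal of height $n-1$ sits just below it, the one-step argument above applies verbatim with $\fp_\alpha$ ranging over all such primes: for each of them $\mu_i(\fp_\alpha,M)=0$ when $i>s$, hence $\mu_t(\fM,M)=0$ for $t>s+1$. (One should double-check the edge behaviour near $p=s$ and $p=s+1$, where the connecting morphism $\delta^s$ genuinely enters, but for the weaker conclusion $t>s+1$ only the vanishing of the $k^{\mu_p(\fp_\alpha,M)}$ terms for $p\ge s+1$ is used, so those exceptional spots do not matter.)

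The main obstacle I anticipate is purely bookkeeping: making sure the identification of $[H^p_{\fp_\alpha}(M)]_{\bf 1}$ with $[H^p_{\fp_\alpha}(M)]_{\alpha}$ — hence with Bass numbers $\mu_p(\fp_\alpha,M)$ — is used correctly, and that the shift $[-1]$ in ${\check{C}}^{\bullet}_{\fp_\alpha}(M_{x_n})[-1]$ is tracked through the long exact sequence so that the index $t-1$ (not $t$) appears. These are exactly the translations carried out in Section~3 and recorded in observations (i)--(iii), so invoking them directly should keep the argument short; the substantive content is entirely the uniform bound on $s$ in the hypothesis, which collapses the induction to a single application of the exact sequence.
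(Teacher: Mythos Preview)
Your argument contains a genuine gap at the key step. After using the long exact sequence for a fixed $\fp_\alpha$ with $|\alpha|=n-1$ you correctly obtain, for $t>s+1$,
\[
[H^t_{\fM}(M)]_{\bf 1}\;\cong\;[H^t_{\fp_\alpha}(M)]_{\bf 1}\;=\;H_t\bigl(\cM^\bullet_{\geq {\bf 1}-\alpha}\bigr).
\]
You then write ``i.e.\ $\mu_t(\fM,M)=\mu_t(\fp_\alpha,M)$'', identifying $[H^t_{\fp_\alpha}(M)]_{\bf 1}$ with the Bass number $\mu_t(\fp_\alpha,M)$. That identification is \emph{false}. By observation~(i) and Corollary~3.5, the Bass number is the degree~$\alpha$ piece
\[
\mu_t(\fp_\alpha,M)=\dim_k[H^t_{\fp_\alpha}(M)]_{\alpha}=\dim_k H_t\bigl(\cM^\bullet_{\leq\alpha}\bigr),
\]
whereas $[H^t_{\fp_\alpha}(M)]_{\bf 1}=H_t(\cM^\bullet_{\geq {\bf 1}-\alpha})$ is the degree~${\bf 1}$ piece, computed by a different complex. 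These are not the same: observation~(iii) says precisely that $H_t(\cM^\bullet_{\geq {\bf 1}-\alpha})$ records the contribution of chains of primes \emph{not} passing through $\fp_\alpha$. A concrete witness is $H^3_I(R)$ in Example~5.7 with $\fp_\alpha=(x_1,x_2,x_3,x_4)$: there $\mu_i(\fp_\alpha,M)=0$ for every $i$, yet $[H^1_{\fp_\alpha}(M)]_{\bf 1}\cong[H^1_{\fM}(M)]_{\bf 1}=k$ since $\mu_1(\fM,M)=1$. So using a single $\fp_\alpha$ and then ``ranging over all'' does not close the argument; each individual application leaves you with $H_t(\cM^\bullet_{\geq {\bf 1}-\alpha})$, which is not controlled by the hypothesis on that same~$\alpha$.

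What the paper's discussions actually suggest is that the vanishing of $H_t(\cM^\bullet_{\geq {\bf 1}-\alpha})$ must come from the \emph{uniform} bound over all height-$(n-1)$ primes, used iteratively: one treats $\cM^\bullet_{\geq {\bf 1}-\alpha}$ as an $(n-1)$-hypercube and applies the same short exact sequence decomposition again with a different coordinate, the relevant quotient at each step being a sub-hypercube of some $\cM^\bullet_{\leq {\bf 1}-\varepsilon_j}$ (whose homology is bounded by the hypothesis). Your write-up acknowledges the indexing issues but misses this structural point; the ``bookkeeping'' you anticipate is not the obstacle, the identification itself is.
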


Therefore we get the main result of this Section:

\begin{theorem}
Let $I \subseteq R=k[x_1,\dots,x_n]$ be a squarefree monomial ideal.
Then, $\forall r$ we have $$ {^\ast}{\rm id}_R H_I^r(R) \leq
{\dim}_R {^\ast}\supp_R H_I^r(R)$$
\end{theorem}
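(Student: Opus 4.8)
The plan is to argue by induction on the codimension of the face ideals and track how the injective resolution grows along maximal chains of prime face ideals. First I would fix $r$ and set $d := {\dim}_R {^\ast}\supp_R H_I^r(R)$. By definition of the $\bZ^n$-graded small support, every face ideal $\fp_\alpha$ with a nonzero Bass number of $H_I^r(R)$ satisfies $\dm R/\fp_\alpha \leq d$, i.e. $|\alpha| \geq n-d$. The goal is to show $\mu_p(\fp_\alpha, H_I^r(R)) = 0$ whenever $p > \dm R/\fp_\alpha$, and in particular $\mu_p(\fM, H_I^r(R)) = 0$ for $p > d$, which together with the Goto--Watanabe translation \cite[Thm. 1.2.3]{GW78} to arbitrary primes gives ${^\ast}{\rm id}_R H_I^r(R) \leq d$.

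Second, I would run the induction on $c = |\alpha|$, the codimension of $\fp_\alpha$, descending through chains $\fp_0 \subseteq \fp_1 \subseteq \cdots \subseteq \fM$ of prime face ideals that start at a minimal prime $\fp_0$ in the support. The base case is Proposition 5.1: if $\fp_0$ is a minimal prime then $\mu_0(\fp_0, H_I^r(R)) \neq 0$ and $\mu_i(\fp_0, H_I^r(R)) = 0$ for $i > 0$, so the claimed bound holds there (with $\dm R/\fp_0 \geq 0$). For the inductive step, I would take a face ideal $\fp_\alpha \subseteq \fM$, pick a variable $x_n \in \fM \setminus \fp_\alpha$ with $\hlt(\fM/\fp_\alpha) = 1$ (reducing to this by localizing and using Remark 3.2, which lets us restrict the hypercube to the relevant face so that $\fp_\alpha$ becomes maximal in a smaller polynomial ring), and invoke the degree-$\mathbf{1}$ long exact sequence from key observation ii) in Section 5:
$$\cdots \longleftarrow k^{\mu_{p-1}(\fp_\alpha, M)} \longleftarrow [H^p_{\fM}(M)]_{\mathbf{1}} \longleftarrow [H^p_{\fp_\alpha}(M)]_{\mathbf{1}} \stackrel{\delta^p}\longleftarrow k^{\mu_p(\fp_\alpha, M)} \longleftarrow \cdots$$
where $M = H_I^r(R)$. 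From this sequence one reads off that if $\mu_{p-1}(\fp_\alpha, M) = 0$ and $[H^p_{\fp_\alpha}(M)]_{\mathbf{1}} = 0$, then $[H^p_{\fM}(M)]_{\mathbf{1}} = \mu_p(\fM, M) = 0$; more precisely this is exactly Proposition 5.9, which says $\mu_t(\fM, H_I^r(R)) = 0$ for all $t > s+1$ where $s = \max\{i : \mu_i(\fp_\alpha, H_I^r(R)) \neq 0\}$ over codimension-$(n-1)$ face ideals $\fp_\alpha$ in the support.

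Third, I would close the induction by combining Proposition 5.9 with the inductive hypothesis applied one codimension down. If $\fp_\alpha$ has $|\alpha| = n-1$, so $\dm R/\fp_\alpha = 1$, then every face ideal $\fp_\beta \subsetneq \fp_\alpha$ in the support has $\dm R/\fp_\beta \geq 2$, but actually we need the bound at $\fp_\alpha$ itself: here $\mu_i(\fp_\alpha, H_I^r(R)) = 0$ for $i > 1 = \dm R/\fp_\alpha$ must already be known, which follows from the inductive hypothesis since passing to the smaller polynomial ring $R'$ (restricting the hypercube to the face $\alpha$) makes $\fp_\alpha$ the maximal ideal of $R'$ and $H_I^r(R)$ restricts to a module with variation zero whose small support has dimension $\leq \dm R/\fp_\alpha = 1$; the induction on $n$ gives the bound there. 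Then $s \leq 1$ for those codimension-$(n-1)$ face ideals, so Proposition 5.9 yields $\mu_t(\fM, H_I^r(R)) = 0$ for $t > 2$; but we need $t > d$, so when $d = 1$ we additionally must rule out $\mu_2(\fM, H_I^r(R)) \neq 0$ using that $[H^2_{\fp_\alpha}(M)]_{\mathbf{1}}$ and the image of $\delta^1$ in the long exact sequence together force $[H^2_{\fM}(M)]_{\mathbf{1}} = 0$ when $d=1$. In general the argument is: iterate the long exact sequence along a full chain $\fp_0 \subsetneq \cdots \subsetneq \fp_{d} = \fM$ realizing the dimension of the small support; each step raises the top nonvanishing Bass index by at most one, starting from index $0$ at $\fp_0$, giving $\mu_t(\fM, H_I^r(R)) = 0$ for $t > d$.

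\textbf{Main obstacle.} The delicate point is bookkeeping the passage to a smaller polynomial ring when localizing at $\fp_\alpha$: one must verify that ${^\ast}\supp$ of the restricted module $\mathcal{M}_{\leq\alpha}$ (equivalently $H_I^r(R)_{\fp_\alpha}$) has dimension exactly $\dm R/\fp_\alpha$ more than what the chain argument needs, and that Remark 3.2's restriction functor genuinely computes Bass numbers over the localization — so that the induction on $n$ is legitimate. The other subtlety is that Proposition 5.9 by itself only gives a bound $s+1$, which is off by one from $\dm R/\fp_\alpha$ in the worst case where $\fp_\alpha$ itself has a top Bass number at index exactly $\dm R/\fp_\alpha$; closing this gap requires noting that when the top Bass index at $\fp_\alpha$ equals $\dm R/\fp_\alpha$, the connecting map $\delta^s$ in observation ii) together with the vanishing $[H^{s+1}_{\fp_\alpha}(M)]_{\mathbf{1}} = 0$ (which holds because $\fp_\alpha$ has dimension one larger and the small-support dimension does not jump) forces $\mu_{s+1}(\fM, H_I^r(R)) = 0$, so the bound does not actually accumulate an extra $+1$ at every step — only at the bottom of the chain where it starts at $0$. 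Writing this carefully, distinguishing the "first step up from a minimal prime" (which genuinely uses $s=0$ from Proposition 5.1) from the later steps, is where the real work lies.
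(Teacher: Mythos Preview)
Your overall strategy---induct along chains of face ideals using Proposition~5.1 as the base and Proposition~5.9 as the step---is exactly the paper's approach. But you have miswritten the inductive invariant, and this error is what generates the phantom ``off by one'' obstacle.

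The statement ``every face ideal $\fp_\alpha$ with a nonzero Bass number satisfies $\dim R/\fp_\alpha \leq d$'' is false, and so is the goal ``$\mu_p(\fp_\alpha, H_I^r(R)) = 0$ whenever $p > \dim R/\fp_\alpha$.'' Take $\fp_\alpha = \fM$: then $\dim R/\fM = 0$, yet $\mu_p(\fM, H_I^r(R))$ can be nonzero for $p$ up to $d$ (see the tables in Examples~5.3 and~5.7). The same confusion appears in your third paragraph, where you assert that the restriction $\mathcal{M}_{\leq\alpha}$ has small support of dimension $\leq \dim R/\fp_\alpha = 1$; in fact it lives over a polynomial ring in $|\alpha| = n-1$ variables and its small support can have dimension up to $n-1$.

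The correct invariant is: for every face ideal $\fp_\alpha$ in the small support, $s(\alpha) := \max\{i : \mu_i(\fp_\alpha, M) \neq 0\}$ is at most $t(\alpha)$, the length of the longest chain of face ideals in ${^\ast}\supp_R M$ ending at $\fp_\alpha$. With this formulation the induction is clean and there is no off-by-one. For $\fp_\alpha$ minimal in the support, Proposition~5.1 gives $s(\alpha) = 0 = t(\alpha)$. For the step, Proposition~5.9 (after restriction to $\fp_\alpha$) gives $s(\alpha) \leq 1 + \max_\beta s(\beta)$, the maximum over $\fp_\beta \subsetneq \fp_\alpha$ with $|\beta| = |\alpha|-1$; those $\fp_\beta$ not in the small support contribute $-\infty$ and drop out. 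By the inductive hypothesis this is $\leq 1 + \max_\beta t(\beta)$, and since $\fp_\alpha$ is in the small support every such chain extends by one, so this is $\leq t(\alpha)$. Taking the maximum over $\alpha$ yields ${^\ast}\id_R M = \max_\alpha s(\alpha) \leq \max_\alpha t(\alpha) = d$. Your discussion of connecting maps $\delta^s$ is then unnecessary.
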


\begin{remark}
Using \cite[Thm. 1.2.3]{GW78} we also have $$ {\rm id}_R H_I^r(R)
\leq {\dim}_R \hskip 1mm\supp_R H_I^r(R)$$ but one must be careful with the ring $R$ we consider. 
In the example above we have:

\begin{itemize}
 \item [$\cdot$] ${^\ast}\id_R H_I^{3}(R) = \id_R H_I^{3}(R) < {\dim}_R \hskip 1mm\supp_R H_I^3(R)$ if $R=k[[x_1,\dots,x_n]]$

\item [$\cdot$] ${^\ast}\id_R H_I^{3}(R) < \id_R H_I^{3}(R)= {\dim}_R \hskip 1mm\supp_R H_I^3(R)$ if $R=k[x_1,\dots,x_n]$.

\end{itemize}

\end{remark}

\begin{remark}

Consider the largest chain of prime face ideals $\fp_0 \subseteq
\fp_1 \subseteq \cdots \subseteq \fp_n$ in the small support of a
local cohomology module $H_I^r(R)$. In these best case scenario we
have a version of property $2)$ that we introduced at the beginning
of this Section that reads off as:

\vskip 2mm

$\cdot$ $\mu_0(\fp_0, H^r_I(R))= 1$ and  $\mu_j(\fp_0, H^r_I(R))=0$
$\forall j>0$.

\vskip 2mm

$\cdot$ $\mu_i(\fp_i, H^r_I(R))\neq 0$ and $\mu_j(\fp_i,
H^r_I(R))=0$ $\forall j>i$, for all $i=1,...,n$.

\vskip 2mm

\noindent Then:

\vskip 2mm

\begin{itemize}

\item[i)] $ {\rm id}_R H_I^r(R)= {\dim}_R
(\supp_R H_I^r(R))$ if and only if this version of property $2)$ is satisfied.

\vskip 2mm

\item[ii)] $ {\rm id}_R H_I^r(R) = {\dim}_R H_I^r(R)$ if and only if this version ofproperty $2)$ is satisfied
and $\fM \in \supp_R H_I^r(R)$.

\end{itemize}

\vskip 2mm

\noindent This sheds some light on the examples treated in
\cite{He08} where the question whether the equality $ {\rm id}_R
H_I^r(R) = {\dim}_R H_I^r(R)$ holds is considered.
On the other end of possible cases we may have:

\vskip 2mm

$\cdot$ $\mu_0(\fp_0, H^r_I(R))=\mu_0(\fp_n, H^r_I(R)) = 1$ and
$\mu_j(\fp_0, H^r_I(R))=\mu_j(\fp_n, H^r_I(R))=0$ $\forall j>0$.

\vskip 2mm

\noindent Notice that in this case the same property holds for any
prime ideal $\fp_i$ in the chain. In particular all the primes in
the chain are associated primes of $H^r_I(R)$.

\vskip 2mm

\end{remark}

%\begin{remark}
%This result is still true if we consider any regular holonomic
%$D_{R|k}$-module with variation zero.
%\end{remark}

%\begin{corollary}

%$ {\rm id}_R H_I^r(R) = {\dim}_R H_I^r(R)$ if and only if $H_I^r(R)$
%is indecomposable and $\supp_R H_I^r(R) = \Supp_R H_I^r(R)$
%??????????????

%\end{corollary}

\section{Matlis dual of local cohomology modules}

The minimal projective resolution of a regular holonomic
$D_{R|k}$-module with variation zero $M$ is in the
form:$$\mathbb{P}^{\bullet}(\cM): \hskip 3mm \xymatrix{ \cdots
\ar[r]^{d^m}& P^{m} \ar[r]^{d^{m-1}}& \cdots \ar[r]^{d^1}& P^{1}
\ar[r]^{d^0}&  P^0 \ar[r]& 0},$$ where the $j$-th term is $$P^j =
\bigoplus_{\alpha\in\{0,1\}^n} {R_{{\bf
x}^{\alpha}}}^{\pi_{j}(\fp_\alpha,M)}.$$ The dual {Bass numbers} of
$M$ with respect to the face ideal $\fp_\alpha \subseteq R$ are the
invariants defined by $\pi_{j}(\fp_\alpha,M)$. These invariants can
be computed using the following form of Matlis duality introduced in
\cite{Al05}:
$$M^{\ast}:=\Hom_{D_{R|k}}(M,E_{\bf 1})$$

This is a shift by ${\bf 1}$ of the usual Matlis duality of
$\bZ^n$-graded modules but it has the advantage of being a duality
in the lattice $\{0,1\}^n$, i.e. is a duality of the type $\alpha
\to {\bf 1}-\alpha$ instead of a duality of the type $\alpha \to
-\alpha$ among its graded pieces. In particular, the  $n$-hypercube
$\cM^{\ast}$ corresponding to $M^{\ast}$ satisfy:

\begin{itemize}

 \item [$\cdot$] $\cM^{\ast}_\alpha = \cM_{\mathbf{1}-\alpha}$

\vskip 2mm

\item [$\cdot$] The map $u^{\ast}_{\alpha,i}:\cM^{\ast}_{\alpha}\lra \cM^{\ast}_{\alpha
+\varepsilon_i}$ is the dual of
$u_{\mathbf{1}-\alpha-\varepsilon_i,i}:\cM_{\mathbf{1}-\alpha-\varepsilon_i}\lra
\cM_{\mathbf{1}-\alpha}$.
\end{itemize}

It is easy to check out that the Matlis dual of an injective
$D^T_{v=0}$-module is projective, more precisely we have
$E_\alpha^\ast= R_{{\bf x}^{{\bf 1}-\alpha}}$ and the Matlis dual of
a simple $D^T_{v=0}$-module is simple, namely we have
$(H^{|\alpha|}_{\fp_\alpha}(R))^{\ast}=H^{|{\bf
1}-\alpha|}_{\fp_{{\bf 1}-\alpha}}(R).$

\vskip 2mm

\begin{proposition}\cite[Prop. 5.3]{Al05} With the previous notation
 $$\pi_{p}(\fp_\alpha,M):=
\mu_{p}(\fp_{{\bf 1}-\alpha},M^{\ast}).$$

\end{proposition}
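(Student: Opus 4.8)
The plan is to obtain the minimal injective resolution of $M^{\ast}$ by applying the duality functor $(-)^{\ast}=\Hom_{D_{R|k}}(-,E_{\bf 1})$ to the minimal projective resolution $\mathbb{P}^{\bullet}(\cM)$ of $M$, and then to read off multiplicities term by term. So the first thing I would do is collect the formal properties of $(-)^{\ast}$ that make this work. On the level of $n$-hypercubes, $(-)^{\ast}$ is, as recorded just before the statement, vertexwise $k$-vector space duality composed with the involution $\alpha\mapsto{\bf 1}-\alpha$ of the indexing poset (together with dualization of the canonical maps). Since the equivalence $D^T_{v=0}\cong\mathcal{C}^n_{v=0}$ is exact and exactness in $\mathcal{C}^n_{v=0}$ is checked vertexwise, and since $k$-duality of finite-dimensional vector spaces is exact, the functor $(-)^{\ast}$ is an exact contravariant self-equivalence of $D^T_{v=0}$ with a natural isomorphism $M^{\ast\ast}\cong M$. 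Moreover it interchanges the indecomposable projective and injective objects: from the identity $E_\alpha^{\ast}=R_{{\bf x}^{{\bf 1}-\alpha}}$ noted in the excerpt, applying $(-)^{\ast}$ once more gives $R_{{\bf x}^{\alpha}}^{\ast}=E_{{\bf 1}-\alpha}$.

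Now apply $(-)^{\ast}$ to $\mathbb{P}^{\bullet}(\cM)\colon\ \cdots\to P^{1}\to P^{0}\to 0$, with its augmentation $P^{0}\twoheadrightarrow M$. Exactness and contravariance yield an injective resolution
$$0\longrightarrow M^{\ast}\longrightarrow (P^{0})^{\ast}\longrightarrow (P^{1})^{\ast}\longrightarrow\cdots$$
of $M^{\ast}$, and by the previous paragraph
$$(P^{j})^{\ast}=\bigoplus_{\alpha\in\{0,1\}^n}\bigl(R_{{\bf x}^{\alpha}}^{\ast}\bigr)^{\pi_{j}(\fp_\alpha,M)}=\bigoplus_{\alpha\in\{0,1\}^n}E_{{\bf 1}-\alpha}^{\pi_{j}(\fp_\alpha,M)}=\bigoplus_{\beta\in\{0,1\}^n}E_{\beta}^{\pi_{j}(\fp_{{\bf 1}-\beta},M)}.$$

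The step I expect to carry the real content is showing that this dualized complex is again \emph{minimal}, i.e. is the minimal ${^\ast}$injective resolution $\mathbb{I}^{\bullet}(M^{\ast})$. Here I would use that $D^T_{v=0}$ is, via the $n$-hypercube, equivalent to the category of finite dimensional modules over a finite dimensional $k$-algebra, so projective covers and injective hulls exist and minimal resolutions are unique. Minimality of $\mathbb{P}^{\bullet}(\cM)$ means that $P^{0}\twoheadrightarrow M$ and each $P^{j+1}\twoheadrightarrow\ker(P^{j}\to P^{j-1})$ is a projective cover; since $(-)^{\ast}$ is an exact duality it sends projective covers to injective hulls, so $M^{\ast}\hookrightarrow(P^{0})^{\ast}$ and each $\mathrm{coker}\bigl((P^{j-1})^{\ast}\to(P^{j})^{\ast}\bigr)\hookrightarrow(P^{j+1})^{\ast}$ is an injective hull, which is precisely the minimality condition for the ${^\ast}$injective resolution. (Alternatively one checks directly that an additive exact contravariant functor takes a contractible direct summand $P\xrightarrow{\mathrm{id}}P$ to a contractible summand $P^{\ast}\xrightarrow{\mathrm{id}}P^{\ast}$, so it preserves the "no contractible summand" characterization of minimality.)

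Finally, by uniqueness of the minimal ${^\ast}$injective resolution we compare $\mathbb{I}^{\bullet}(M^{\ast})$ with the resolution obtained above: for each $j$ and each $\beta\in\{0,1\}^n$, the multiplicity of $E_{\beta}$ in the $j$-th term gives $\mu_{j}(\fp_{\beta},M^{\ast})=\pi_{j}(\fp_{{\bf 1}-\beta},M)$, and setting $\beta={\bf 1}-\alpha$ yields $\pi_{p}(\fp_\alpha,M)=\mu_{p}(\fp_{{\bf 1}-\alpha},M^{\ast})$. The only bookkeeping point to be careful about is homological degree: a projective resolution sitting in nonnegative homological degree dualizes, under the contravariant $(-)^{\ast}$, to an injective resolution sitting in the same nonnegative cohomological degree, so the index $p$ is preserved and no shift is introduced.
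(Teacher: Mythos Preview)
Your argument is correct. The paper does not actually prove this proposition: it is quoted from \cite[Prop.~5.3]{Al05} and stated without proof, so there is nothing to compare against at the level of strategy. Your approach---dualizing the minimal projective resolution via the exact contravariant self-equivalence $(-)^{\ast}$, using $R_{{\bf x}^{\alpha}}^{\ast}=E_{{\bf 1}-\alpha}$, and checking that minimality is preserved because projective covers go to injective envelopes---is the natural one and is essentially how such statements are proved in \cite{Al05}. The only cosmetic point is that the ``$:=$'' in the displayed formula is slightly misleading notation on the paper's part: $\pi_{p}(\fp_\alpha,M)$ was already defined as a multiplicity in the minimal projective resolution, so the proposition is genuinely an equality (which is what you prove), not a definition.
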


To compute the later Bass number we may assume $\fp_{{\bf
1}-\alpha}=\fM$ is the maximal ideal just using localization so it
boils down to compute the homology of the degree {\bf 1} part of the
\v{C}ech complex:

$$[{{\check{C}}}_{\fM}(M^{\ast})]_{\bf
1}^\bullet: 0\longleftarrow [M^{\ast}]_{\bf
1}\stackrel{\overline{d_{0}}}\longleftarrow \bigoplus_{|\alpha|=1}
[M^{\ast}_{{\bf x^{\alpha}}}]_{\bf
1}\stackrel{\overline{d_{1}}}\longleftarrow
\cdots\stackrel{\overline{d_{p-1}}}\longleftarrow
\bigoplus_{|\alpha|=p} [M^{\ast}_{{\bf x^{\alpha}}}]_{\bf
1}\stackrel{\overline{d_{p}}}\longleftarrow \cdots
\stackrel{\overline{d_{n-1}}}\longleftarrow [M^{\ast}_{{\bf x}^{\bf
1}}]_{\bf 1}\longleftarrow 0$$

\noindent On the other hand, we can also construct the following
complex of $k$-vector spaces from the $n$-hypercube associated to
$M$:
$${{\cM}^{\ast}}^\bullet: 0\longleftarrow [M]_{\bf 0}\stackrel{u^{\ast}_{0}}\longleftarrow
\bigoplus_{|\alpha|=1}
[M]_{\alpha}\stackrel{u^{\ast}_{1}}\longleftarrow
\cdots\stackrel{u^{\ast}_{p-1}}\longleftarrow \bigoplus_{|\alpha|=p}
[M]_{\alpha}\stackrel{u^{\ast}_{p}} \longleftarrow \cdots
\stackrel{u^{\ast}_{n-1}}\longleftarrow [M]_{\bf 1}\longleftarrow
0$$ where the map between summands $[M]_{\alpha} \lra
[M]_{\alpha-\varepsilon_i}$ is sign$(i,\alpha-\varepsilon_i)$ times
the dual of the canonical map $u_{\alpha-\varepsilon_i,i}$. Namely,
${{\cM}^{\ast}}^\bullet$ is the dual, as $k$-vector spaces, of
${{\cM}}^\bullet$. We can mimic what we did for Bass numbers to
obtain:

\begin{proposition}
Let $M\in D_{v=0}^T$ be a regular holonomic $D_{R|k}$-module with
variation zero and $\cM^{\ast \bullet}$ its corresponding complex
associated to the $n$-hypercube. Then, there is an isomorphism of
complexes ${\cM}^{\ast \bullet}\cong
[{\check{C}}_{\fM}(M^\ast)]_{\bf 1}^\bullet.$

%In particular $[H_{{\fM}}^p(M^\ast)]_{\bf 1}\cong {\rm H}^{p}(\cM^{\bullet})$.
\end{proposition}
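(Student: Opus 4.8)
The strategy is to reduce this statement to the main result of Section~3, namely the isomorphism $\cM^{\bullet}\cong[\check{C}_{\fM}(M)]_{\bf 1}^{\bullet}$ (Proposition~3.4), applied not to $M$ but to its Matlis dual $M^{\ast}=\Hom_{D_{R|k}}(M,E_{\bf 1})$, and then to recognise the complex it produces as the complex $\cM^{\ast\bullet}$ attached to the $n$-hypercube of $M$. No new idea beyond Section~3 should be needed; the work is in matching up the two constructions.

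The first point to settle is that $M^{\ast}$ again lies in $D_{v=0}^{T}$, so that Proposition~3.4 is applicable to it. This is recorded in \cite{Al05}: dualising the short exact sequences attached to an increasing filtration $\{F_j\}_{0\le j\le n}$ of $M$ with $F_j/F_{j-1}\simeq\bigoplus_{|\alpha|=j}(H^{|\alpha|}_{\fp_{\alpha}}(R))^{m_{\alpha}}$, and using the identification $(H^{|\alpha|}_{\fp_{\alpha}}(R))^{\ast}\cong H^{|{\bf 1}-\alpha|}_{\fp_{{\bf 1}-\alpha}}(R)$, produces a filtration of $M^{\ast}$ of the required shape (with $\alpha$ replaced by ${\bf 1}-\alpha$ and the filtration read in the opposite order). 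Moreover, as recalled just before the statement, the $n$-hypercube $\cM^{\ast}$ of $M^{\ast}$ is obtained from that of $M$ by $\cM^{\ast}_{\alpha}\cong\cM_{{\bf 1}-\alpha}$ together with $u^{\ast}_{\alpha,i}=(u_{{\bf 1}-\alpha-\varepsilon_i,i})^{\ast}$.

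Now apply Proposition~3.4 to $M^{\ast}$: it gives an isomorphism of complexes $(\cM^{\ast})^{\bullet}\cong[\check{C}_{\fM}(M^{\ast})]_{\bf 1}^{\bullet}$, where $(\cM^{\ast})^{\bullet}$ is the complex of $k$-vector spaces built out of the hypercube $\cM^{\ast}$ by the Section~3 recipe. It remains to identify $(\cM^{\ast})^{\bullet}$ with $\cM^{\ast\bullet}$. Substituting $\beta={\bf 1}-\alpha$, the degree-$p$ term $\bigoplus_{|\alpha|=n-p}\cM^{\ast}_{\alpha}$ of $(\cM^{\ast})^{\bullet}$ becomes $\bigoplus_{|\beta|=p}\cM_{\beta}=\bigoplus_{|\beta|=p}[M]_{\beta}$, which is exactly the degree-$p$ term of $\cM^{\ast\bullet}$; and the component of the differential running from the $\alpha$-summand to the $(\alpha+\varepsilon_i)$-summand, namely $\pm\,u^{\ast}_{\alpha,i}\colon[M]_{\beta}\to[M]_{\beta-\varepsilon_i}$, is $\pm$ the transpose of $u_{\beta-\varepsilon_i,i}$ — precisely the prescription defining $\cM^{\ast\bullet}$ as the $k$-linear dual of $\cM^{\bullet}$. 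The two sign conventions $\mathrm{sign}(i,\alpha+\varepsilon_i)$ and $\mathrm{sign}(i,\beta-\varepsilon_i)$ differ at worst by a factor depending only on $|\beta|$, so twisting each term of $(\cM^{\ast})^{\bullet}$ by a suitable unit of $k$ (which alters neither the isomorphism type of the complex nor its homology) yields $(\cM^{\ast})^{\bullet}\cong\cM^{\ast\bullet}$. Composing with the previous isomorphism gives $\cM^{\ast\bullet}\cong[\check{C}_{\fM}(M^{\ast})]_{\bf 1}^{\bullet}$.

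The only delicate step is this last piece of bookkeeping: carrying the reindexing $\alpha\leftrightarrow{\bf 1}-\alpha$ through the Section~3 construction and checking that the two sign conventions are compatible after a term-by-term rescaling. Everything else is a direct appeal to Proposition~3.4 (for $M^{\ast}$) and to \cite{Al05} for the fact that $M^{\ast}\in D_{v=0}^{T}$ with the stated hypercube. Alternatively one may phrase the identification $(\cM^{\ast})^{\bullet}\cong\cM^{\ast\bullet}$ more structurally, by observing that $M\mapsto\cM^{\bullet}$ intertwines the functor $(-)^{\ast}$ with $k$-linear dualisation of complexes — which is exactly how $\cM^{\ast\bullet}$ was defined — but the hands-on verification above is already enough.
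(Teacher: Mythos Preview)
Your approach is essentially what the paper intends: it gives no detailed proof here, merely remarking that one may ``mimic what we did for Bass numbers,'' i.e., carry through the Section~3 argument with $M^{\ast}$ in place of $M$. Applying the main isomorphism $\cM^{\bullet}\cong[\check{C}_{\fM}(M)]_{\bf 1}^{\bullet}$ directly to $M^{\ast}$ (after noting $M^{\ast}\in D_{v=0}^{T}$) and then identifying the resulting complex with $\cM^{\ast\bullet}$ via the reindexing $\alpha\leftrightarrow{\bf 1}-\alpha$ is the natural way to execute this, and your outline is correct.

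One small inaccuracy in the bookkeeping step: your claim that the two sign conventions ``differ at worst by a factor depending only on $|\beta|$'' is not right. With $\alpha={\bf 1}-\beta$ one computes $\mathrm{sign}(i,\alpha+\varepsilon_i)=(-1)^{|\{j<i:\beta_j=0\}|}$ while $\mathrm{sign}(i,\beta)=(-1)^{|\{j<i:\beta_j=1\}|}$, so their ratio is $(-1)^{i-1}$, which depends on $i$ rather than on $|\beta|$. A rescaling still repairs this, but the scalar on the summand $[M]_{\beta}$ must depend on all of $\beta$, not just its weight; for instance $c_{\beta}=(-1)^{\sum_j (j-1)\beta_j}$ satisfies $c_{\beta}/c_{\beta-\varepsilon_i}=(-1)^{i-1}$ and does the job. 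With that correction your argument goes through.
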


Therefore we have the following characterization of Bass numbers:

\begin{corollary}
Let $M\in D_{v=0}^T$ be a regular holonomic $D_{R|k}$-module with
variation zero and $\cM^{\ast \bullet}$ its corresponding complex
associated to the $n$-hypercube. Then $$\pi_{p}({{\fp}_{{ 0}}},M)=
{\rm dim}_k \hskip 1mm {\rm H}_{p}({\cM^{\ast}}^{\bullet})$$

\end{corollary}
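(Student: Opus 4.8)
The plan is to assemble the statement from the preceding Proposition, from Proposition 6.1 (i.e.\ \cite[Prop.\ 5.3]{Al05}), and from the computations of Section 3. First note that $\fp_{\bf 0}$ is the zero ideal, so ${\bf 1}-{\bf 0}={\bf 1}$ and $\fp_{\bf 1}=\fM$; hence Proposition 6.1 gives at once $\pi_p(\fp_{\bf 0},M)=\mu_p(\fp_{{\bf 1}-{\bf 0}},M^{\ast})=\mu_p(\fM,M^{\ast})$, with no localization step needed. It therefore suffices to prove $\mu_p(\fM,M^{\ast})={\rm dim}_k\,{\rm H}_p({\cM^{\ast}}^{\bullet})$.

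Next I would use that the shifted Matlis dual $M^{\ast}=\Hom_{D_{R|k}}(M,E_{\bf 1})$ again belongs to $D_{v=0}^T$, with $n$-hypercube $\cM^{\ast}$ described by $\cM^{\ast}_{\alpha}=\cM_{{\bf 1}-\alpha}$ and $u^{\ast}_{\alpha,i}$ equal to the dual of $u_{{\bf 1}-\alpha-\varepsilon_i,i}$, as recorded just before Proposition 6.1. Thus the basic identity of Section 3, namely $\mu_p(\fM,N)={\rm dim}_k\,[H^p_{\fM}(N)]_{\bf 1}$ for $N\in D_{v=0}^T$ (see \cite{Al05}), applies to $N=M^{\ast}$; and, exactly as in the paragraph preceding Proposition 3.3, applying the exact functor $\Hom_{D_{R|k}}(-,E_{\bf 1})$ to the \v{C}ech complex of $M^{\ast}$ identifies $[H^p_{\fM}(M^{\ast})]_{\bf 1}$ with ${\rm H}_p\big([\check{C}_{\fM}(M^{\ast})]_{\bf 1}^{\bullet}\big)$. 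Finally the preceding Proposition supplies an isomorphism of complexes ${\cM^{\ast}}^{\bullet}\cong[\check{C}_{\fM}(M^{\ast})]_{\bf 1}^{\bullet}$. Chaining these identifications gives $\pi_p(\fp_{\bf 0},M)=\mu_p(\fM,M^{\ast})={\rm dim}_k\,{\rm H}_p\big([\check{C}_{\fM}(M^{\ast})]_{\bf 1}^{\bullet}\big)={\rm dim}_k\,{\rm H}_p({\cM^{\ast}}^{\bullet})$, which is the claim.

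As this shows, the corollary is a formal consequence of results already established, so there is no genuine difficulty; the one place that requires care is the sign-and-index bookkeeping. Concretely, one must verify that the complex produced by the Section 3 recipe from the hypercube $\cM^{\ast}$ of $M^{\ast}$ --- whose degree-$p$ term is $\bigoplus_{|\alpha|=n-p}[M^{\ast}]_{\alpha}$ with differentials built from the signed $u^{\ast}_{\alpha,i}$ --- coincides, after the substitution $\alpha\mapsto{\bf 1}-\alpha$, with the complex ${\cM^{\ast}}^{\bullet}$ defined in this section (degree-$p$ term $\bigoplus_{|\alpha|=p}[M]_{\alpha}$, differential the signed $k$-linear dual of $u_{\alpha-\varepsilon_i,i}$). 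The underlying vector spaces and the maps match under this substitution --- which is precisely why ${\cM^{\ast}}^{\bullet}$ can be described simply as the $k$-linear dual of $\cM^{\bullet}$ --- and checking that the signs agree is routine, entirely parallel to the way Corollary 3.4 was deduced from Proposition 3.3 (``we can mimic what we did for Bass numbers'').
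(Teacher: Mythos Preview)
Your proposal is correct and follows essentially the same route as the paper: the corollary is meant to be an immediate consequence of Proposition~6.1 (giving $\pi_p(\fp_{\bf 0},M)=\mu_p(\fM,M^{\ast})$), the identity $\mu_p(\fM,M^{\ast})=\dim_k[H^p_{\fM}(M^{\ast})]_{\bf 1}$, and the preceding Proposition~6.2 identifying $[\check{C}_{\fM}(M^{\ast})]_{\bf 1}^{\bullet}$ with $\cM^{\ast\bullet}$. The paper does not spell out a proof beyond the remark ``we can mimic what we did for Bass numbers''; your additional paragraph on the sign-and-index bookkeeping is a careful unpacking of exactly this remark, not a deviation from the intended argument.
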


\vskip 2mm

\subsection{The local cohomology case}

Let $\cM=\{[H^r_I(R)]_{\alpha}\}_{\alpha\in\{0,1\}^n} $ be the
$n$-hypercube of a local cohomology module $H^r_I(R)$ supported on a
monomial ideal. As in the case of Lyubeznik numbers we can also
relate dual Bass numbers of $\fp_0$ to  the $r$-linear
strand of the Alexander dual ideal $I^\vee$. In this case, if
$$\mathbb{L}_{\bullet}^{<r>}(I^{\vee}): \hskip 3mm \xymatrix{ 0
\ar[r]& L_{n-r}^{<r>} \ar[r]& \cdots \ar[r]& L_1^{<r>} \ar[r]&
L_{0}^{<r>} \ar[r]& 0},$$ is the $r$-linear strand of  $I^\vee$
then we consider its monomial matrices to obtain a complex of $k$-vector
spaces indexed as follows:
$$\mathbb{F}_{\bullet}^{<r>}(I^{\vee}): \hskip 3mm \xymatrix{ 0 \ar[r]&
K_{n}^{<r>} \ar[r]& \cdots \ar[r]& K_{r+1}^{<r>} \ar[r]& K_{r}^{<r>}
\ar[r]& 0 }$$ Therefore we obtain analogous results to those in
Section $4.1$

\vskip 2mm

\begin{proposition}
The complex of $k$-vector spaces $\cM^{\ast \bullet}$  associated to
the $n$-hypercube of a fixed local cohomology module $H^r_I(R)$ is
isomorphic to the complex $\mathbb{F}_{\bullet}^{<r>}(I^{\vee})$
obtained from the $r$-linear strand
$\mathbb{L}_{\bullet}^{<r>}(I^{\vee})$ of the Alexander dual ideal
of $I$.
\end{proposition}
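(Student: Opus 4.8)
The plan is to obtain this statement by dualizing the identification already proved in Section~4.1, using that $\cM^{\ast\bullet}$ is, by its very construction, the $k$-linear dual of the complex $\cM^\bullet$. Recall that in Section~4.1 the complex $\cM^\bullet$ attached to $\cM=\{[H^r_I(R)]_\alpha\}$ was shown to coincide with $\mathbb{F}_{\bullet}^{<r>}(I^{\vee})^{\ast}$, the complex of finite-dimensional $k$-vector spaces whose matrices are the \emph{transposes} of the monomial matrices of the $r$-linear strand $\mathbb{L}_{\bullet}^{<r>}(I^{\vee})$; this was done through Yanagawa's complex $\mathbb{F}_{\bullet}$, the relation (up to a degree $-{\bf 1}$ shift) between the straight module $H^r_I(R)$ and the squarefree module ${\rm Ext}_R^r(R/I,R(-{\bf 1}))$, and Musta\c{t}\u{a}'s formula $\beta_{j,\alpha}(I^{\vee})=\dim_k[H^{|\alpha|-j}_I(R)]_\alpha$. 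Applying the exact functor $\Hom_k(-,k)$ to a complex of finite-dimensional $k$-vector spaces reverses all the arrows and transposes every matrix; since transposing twice is the identity, the matrices of $\cM^{\ast\bullet}=(\cM^\bullet)^{\ast}$ are exactly the (untransposed) monomial matrices of $\mathbb{L}_{\bullet}^{<r>}(I^{\vee})$, which is precisely the defining data of $\mathbb{F}_{\bullet}^{<r>}(I^{\vee})$.

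It then remains to check that the arrow reversal together with the reindexing produces $\mathbb{F}_{\bullet}^{<r>}(I^{\vee}):0\to K_n^{<r>}\to\cdots\to K_r^{<r>}\to 0$ in the orientation written before the statement, and that the Koszul signs agree; I would verify this termwise. By the description of $\cM^{\ast\bullet}$ recalled in Section~6, its $p$-th term is $\bigoplus_{|\alpha|=p}[H^r_I(R)]_\alpha$, which by Musta\c{t}\u{a}'s formula has dimension $\sum_{|\alpha|=p}\beta_{p-r,\alpha}(I^{\vee})$, i.e. the rank of $L_{p-r}^{<r>}$, i.e. $\dim_k K_p^{<r>}$; so $\cM^{\ast\bullet}$ is concentrated in homological degrees $r,\dots,n$, just like $\mathbb{F}_{\bullet}^{<r>}(I^{\vee})$, with the summand $[H^r_I(R)]_\alpha$ matching the degree-$\alpha$ generators of $L_{p-r}^{<r>}$. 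The differential of $\cM^{\ast\bullet}$ between the summands $[H^r_I(R)]_\alpha$ and $[H^r_I(R)]_{\alpha-\varepsilon_i}$ is, up to the \v{C}ech sign, the dual of the canonical map $u_{\alpha-\varepsilon_i,i}$, and under the equivalence of Section~2.3 — in which $u_{\beta,i}$ is the transpose of multiplication by $x_i$ on graded pieces — this is exactly the scalar entry prescribed by the monomial matrix of $d_{p-r}^{<r>}$, the signs on both sides being the ones built into the \v{C}ech complex and into Yanagawa's complex. Assembling these identifications over all $p$ and all summands yields an isomorphism of complexes $\cM^{\ast\bullet}\cong\mathbb{F}_{\bullet}^{<r>}(I^{\vee})$.

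The only real friction I anticipate is exactly this bookkeeping: ensuring that passing to the $k$-dual of $\cM^\bullet$ returns $\mathbb{F}_{\bullet}^{<r>}(I^{\vee})$ with differentials \emph{lowering} the homological index (the same direction as $\mathbb{L}_{\bullet}^{<r>}(I^{\vee})$), in contrast to the transposed complex $\mathbb{F}_{\bullet}^{<r>}(I^{\vee})^{\ast}$ of Section~4.1 whose differentials run the other way, with no spurious degree shift and with the Koszul signs lining up. This is settled by the termwise comparison above, since both complexes have underlying graded vector space $\bigoplus_\alpha[H^r_I(R)]_\alpha$ with the same homological grading and the same signed transition maps. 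Alternatively, one can bypass Section~4.1 and argue straight from the isomorphism $\cM^{\ast\bullet}\cong[{\check{C}}_{\fM}(M^{\ast})]_{\bf 1}^\bullet$ established in Section~6, repeating for the Matlis dual $M^{\ast}=\Hom_{D_{R|k}}(H^r_I(R),E_{\bf 1})$ the comparison between Yanagawa's complex and the $r$-linear strand that was used in Section~4.1; but dualizing the Bass-number statement is the shorter route.
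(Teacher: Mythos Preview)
Your proposal is correct and matches the paper's approach: the paper does not give a detailed proof but simply says ``Therefore we obtain analogous results to those in Section~4.1,'' having just noted that $\cM^{\ast\bullet}$ is the $k$-linear dual of $\cM^\bullet$ and that $\mathbb{F}_{\bullet}^{<r>}(I^{\vee})$ is obtained from the (untransposed) monomial matrices of the $r$-linear strand. Your argument---dualize Proposition~4.1 and check the reindexing---is exactly the content of that remark, spelled out in more detail than the paper provides.
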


\vskip 2mm

\begin{corollary}
Let $\mathbb{F}_{\bullet}^{<r>}(I^{\vee})$ be the complex of $k$-vector spaces
obtained from the $r$-linear strand of the minimal free resolution of the
Alexander dual ideal $I^{\vee}$. Then
$$\pi_{p}(\fp_0,H_I^{r}(R))= {\rm dim}_k
H_{p}(\mathbb{F}_{\bullet}^{<r>}(I^{\vee}))$$
\end{corollary}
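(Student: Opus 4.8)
The plan is to obtain this statement by specializing the two preceding results of this Section to $M=H_I^r(R)$. The first point to record is that $H_I^r(R)$ is a regular holonomic $D_{R|k}$-module with variation zero, i.e. it belongs to the category $D_{v=0}^T$; this was recalled in Section~2. Consequently Corollary~6.3 applies with $M=H_I^r(R)$ and yields
$$\pi_p(\fp_0,H_I^r(R))={\rm dim}_k\,{\rm H}_p(\cM^{\ast\bullet}),$$
where $\cM^{\ast\bullet}$ is the complex of $k$-vector spaces attached to the $n$-hypercube $\cM=\{[H_I^r(R)]_\alpha\}_{\alpha\in\{0,1\}^n}$, namely the $k$-linear dual of $\cM^\bullet$ equipped with the signs on the arrows prescribed before Proposition~6.2.

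Next I would invoke Proposition~6.4, which is precisely the identification $\cM^{\ast\bullet}\cong \mathbb{F}_{\bullet}^{<r>}(I^{\vee})$ as complexes of $k$-vector spaces, where $\mathbb{F}_{\bullet}^{<r>}(I^{\vee})$ is read off the monomial matrices of the $r$-linear strand $\mathbb{L}_{\bullet}^{<r>}(I^{\vee})$ of the minimal free resolution of $I^{\vee}$ (in the indexing $0\to K_n^{<r>}\to\cdots\to K_r^{<r>}\to 0$, as opposed to the transposed indexing used for Lyubeznik numbers in Section~4.1). An isomorphism of complexes induces isomorphisms on homology in every degree, so ${\rm H}_p(\cM^{\ast\bullet})\cong {\rm H}_p(\mathbb{F}_{\bullet}^{<r>}(I^{\vee}))$ for all $p$; combined with the displayed equality this gives the assertion.

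Essentially all of the substance is already packaged in Corollary~6.3 (which rests on \cite[Prop. 5.3]{Al05} and Proposition~6.2) and in Proposition~6.4 (whose proof parallels the Bass-number case of Section~4.1, trading the transpose of the monomial matrices for the matrices themselves and relying on Yanagawa's equivalence between squarefree and straight modules together with the identification of the squarefree module ${\rm Ext}_R^r(R/I,R(-{\bf 1}))$ with $H_I^r(R)(-{\bf 1})$). Hence I do not expect a real obstacle; the only thing demanding care is the bookkeeping of homological degrees, i.e. checking that the degree $p$ on the dual-Bass-number side matches the degree $p$ in the indexing of $\mathbb{F}_{\bullet}^{<r>}(I^{\vee})$ inherited from $\mathbb{L}_{\bullet}^{<r>}(I^{\vee})$, so that no shift is introduced when passing through the two isomorphisms.
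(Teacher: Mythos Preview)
Your proposal is correct and follows exactly the paper's approach: the corollary is stated without proof because it is the immediate combination of Corollary~6.3 (giving $\pi_p(\fp_0,H_I^r(R))=\dim_k H_p(\cM^{\ast\bullet})$) with Proposition~6.4 (identifying $\cM^{\ast\bullet}\cong\mathbb{F}_{\bullet}^{<r>}(I^{\vee})$), which is precisely what you do.
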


\begin{example}
Consider the ideal $I=(x_1,x_4)\cap (x_2,x_5)\cap  (x_1,x_2,x_3)$ in
Example $5.7$. The non-vanishing pieces of the hypercube associated
to the Matlis dual of the corresponding local cohomology modules
are:

\vskip 2mm

 $[(H_{I}^2(R))^{\ast}]_\alpha= k$  \hskip 2mm for $\alpha=(0,1,1,0,1),(1,0,1,1,0)$.

 $[(H_{I}^3(R))^{\ast}]_\alpha= k$  \hskip 2mm for $\alpha=(0,0,0,1,1),(0,0,0,0,1),(0,0,0,1,0),(0,0,1,0,0),(0,0,0,0,0)$.

\vskip 2mm

\noindent In this case we have $(H_{I}^2(R))^{\ast} \cong H_{(x_2,x_3,x_5)}^3(R) \oplus H_{(x_1,x_3,x_4)}^3(R)$ and
the complex associated to the hypercube of  $(H_{I}^3(R))^{\ast}$ is:
$${\xymatrix { 0 & k \ar[l]  && k^3 \ar[ll]_{{\Tiny \begin{pmatrix}
 -1 & 1 & 0  \end{pmatrix}}} && k \ar[ll]_{{\Tiny
\left(\begin{array}{rrr}
1\\ 1\\ 1  \end{array}\right)}}
 & 0
\ar[l] }}$$

%\hskip 1cm ${\Supp}_R \hskip 1mm H_{I}^2(R)= V(x_1,x_4)\cup V(x_2,x_5)$.

%\hskip 1cm ${\Supp}_R \hskip 1mm H_{I}^3(R) = V(x_1,x_2,x_3)\cup V(x_1,x_2,x_4,x_5)$

\vskip 2mm

\noindent Then, the dual Bass numbers are:

\vskip 2mm

$\pi_0(\fp_\alpha,H_{I}^2(R))=1 $ \hskip 2mm for
$\alpha=(1,0,0,1,0),(0,1,0,0,1)$.

$\pi_1(\fp_\alpha,H_{I}^2(R))=1 $ \hskip 2mm for
$\alpha=(1,0,0,0,0),(0,0,0,1,0),(0,1,0,0,0),(0,0,0,0,1)$.

$\pi_2(\fp_\alpha,H_{I}^2(R))=2 $ \hskip 2mm for
$\alpha=(0,0,0,0,0)$.

\vskip 3mm

$\pi_0(\fp_\alpha,H_{I}^3(R))=1 $ \hskip 2mm for
$\alpha=(1,1,1,1,1)$.

$\pi_1(\fp_\alpha,H_{I}^3(R))=1 $ \hskip 2mm for
$\alpha=(1,1,0,0,1),(1,1,0,1,0),(0,1,1,1,1),(1,0,1,1,1)$.

$\pi_2(\fp_\alpha,H_{I}^3(R))=1 $ \hskip 2mm for
$\alpha=(1,1,0,0,0),(1,0,0,1,0),(1,0,0,0,1),(0,1,0,1,0),(0,1,0,0,1),$

\hskip 5cm $(0,0,1,1,1)$.

$\pi_3(\fp_\alpha,H_{I}^3(R))=1 $ \hskip 2mm for
$\alpha=(1,0,0,0,0),(0,1,0,0,0),(0,0,0,1,0),(0,0,0,0,1)$.

$\pi_4(\fp_\alpha,H_{I}^3(R))=1 $ \hskip 2mm for
$\alpha=(0,0,0,0,0)$.

\vskip 3mm

\begin{remark}
It is worth to point out that one may find examples of modules
having the same Bass numbers but different dual Bass numbers. For
example, consider $H_{I}^3(R)$ in the previous example and
$H_{(x_1,x_2,x_4,x_5)}^4(R) \oplus E_{(1,1,1,0,0)}$. The
non-vanishing parts of their corresponding hypercubes  are
respectively
{\tiny $${\xymatrix {  k \ar[d]_{1} \ar[dr]^(.3){1}  & & \\
k\ar[dr]_{1}& k \ar[d]_{1} & k \ar[dl]^{1}
\\& k & }} \hskip 1cm
{\xymatrix {  k \ar[d]_{1} \ar[dr]^(.3){1}  & & \\ k\ar[dr]_{1}& k
\ar[d]_{1} & k \ar[dl]^{0}
\\& k & }}
$$} \noindent Notice that these modules are not isomorphic.

\end{remark}

\end{example}

\section{Questions}

The approach we take in this work to study Lyubeznik numbers opens
up a number of questions just because of its relation to free
resolutions. We name here a few and we hope that they will be
addressed elsewhere.

\vskip 2mm

$\bullet$ {\bf Topological description of Lyubeznik numbers: } 
A recurrent topic in recent years has been to attach a cellular
structure to the free resolution of a monomial ideal. In general
this can not be done as it is proved in \cite{Vel08} but there are
large families of ideals having a cellular resolution.  Using the dictionary described
in Section $4$ we can translate the same questions to Lyubeznik
numbers. In particular we would be interested in finding cellular
structures on the linear strands of a free resolution so one can
give a topological description of Lyubeznik numbers.

\vskip 2mm

Another question that immediately pops up is the behavior of
Lyubeznik numbers with respect to the characteristic of the field.
In \cite{DaKu} it is proved that the Betti table in characteristic
zero is obtained from the positive characteristic Betti table by a
sequence of consecutive cancelations, i.e. cancelation of terms in
two different linear strands as it can be seen in Example $4.6$. In
our situation not only the cancelation affects the behavior, we also
have to put into the picture the acyclicity of the linear strands.

\vskip 2mm

We do not know whether it is possible to find an example where the Betti table depends on
the characteristic but the Lyubeznik table does not. Such an example
would require that the Betti table in characteristic zero is
obtained from the positive characteristic Betti table by at least
two consecutive cancelations.

\vskip 2mm

$\bullet$ {\bf Injective resolution of local cohomology modules: }
When $R/I$ is Cohen-Macaulay, a complete description of the
injective resolution of $H_I^r(R)$, i.e. Bass numbers and maps
between injective modules, was given in \cite{Ya01}. The question
on how to find a general description for any ideal might be too difficult
so we turn our attention to some nice properties of the resolution.

\vskip 2mm

The injective resolution can be decomposed in linear strands. Namely, if
$$\mathbb{I}_{\bullet}(M): \hskip 3mm \xymatrix{ 0 \ar[r]& M \ar[r] & I_0  \ar[r]& I_1 \ar[r]& \cdots \ar[r]& I_m
\ar[r]& 0}$$ is the injective resolution of a module with variation zero then, given an integer $r$, the {\it
$r$-linear strand} of $\mathbb{I}_{\bullet}(M)$ is the complex:
$$\mathbb{I}_{\bullet}^{<r>}(M): \hskip 3mm \xymatrix{ 0 \ar[r]&
I_{0}^{<r>} \ar[r]& I_{1}^{<r>} \ar[r]& \cdots
\ar[r]& I_{m}^{<r>} \ar[r]& 0},$$ where $$I_j^{<r>} =
\bigoplus_{|\alpha|=j+r} E_\alpha^{\mu_{j}(\fp_\alpha, M)},$$

\vskip 2mm

When $R/I$ is Cohen-Macaulay, we have $\mu_{p}(\fp_\alpha, H_I^{\hlt
I}(R))= \delta_{p,n-|\alpha|}$ for all face ideals in the support of
$R/I$, so the injective resolution of $H_I^{\hlt I}(R)$ behaves like
the injective resolution of a Gorenstein ring. In particular, this
resolution is linear.  When we turn our attention to minimal
non-Cohen-Macaulay we see that the injective resolution of
$H_I^{\hlt I}(R)$ behaves like that of a Gorenstein ring except for
the Bass number with respect to the maximal ideal. Notice that the
module $H^2_{\fa_4}(R)$ in Example $4.4$ has a $2$-linear injective
resolution with $\mu_2(\fM,H^2_{\fa_4}(R))=2$ but the resolution of
$H^2_{\fa_5}(R)$ has two linear strands since
$\mu_2(\fM,H^2_{\fa_5}(R))=\mu_3(\fM,H^2_{\fa_5}(R))=1$. As in the
case of free resolutions it would be interesting to study the
different linear strands in the injective resolution of $H^r_I(R)$
and how these linear strands depend on the other local cohomology
modules $H^s_I(R)$, $s\neq r$.

\vskip 2mm $\bullet$ {\bf Projective resolution of local cohomology
modules: } The same questions we posted above for injective
resolutions can be asked for projective resolutions. We have to
point out that F.~Barkats \cite{Ba95} gave an algorithm to compute a
presentation of the local cohomology modules $H^r_I(R)$ using in an
implicit way a projective resolution of these modules with variation
zero. However she was only able to compute effectively examples in
the polynomial ring $k[x_1,...,x_6]$.

\end{document}